\newtheorem{theorem}{Theorem}[section]
\newtheorem{lemma}[theorem]{Lemma}
\newtheorem{proposition}[theorem]{Proposition}
\newtheorem{corollary}[theorem]{Corollary}
\newtheorem{hypothesis}[theorem]{Hypothesis}
\theoremstyle{definition}
\newenvironment{remark}[1][Remark]{\begin{trivlist}
\item[\hskip \labelsep {\bfseries #1}]}{\end{trivlist}}
\newcommand{\textcyr}[1]{{\fontencoding{OT2}\fontfamily{wncyr}\fontseries{m}\fontshape{n}\selectfont #1}}
\newcommand{\Sha}{{\mbox{\textcyr{Sh}}}}
\author{Jeanine Van Order}
\address{Fakult\"at f\"ur Mathematik, Universit\"at Bielefeld}
\email{jvanorder@math.uni-bielefeld.de}
\subjclass{Primary 11F67, 11F70, 11E45, 11R23; Secondary 11F25, 11F41, 11R23}
\begin{document}

\title{Rankin-Selberg $L$-functions in cyclotomic towers, III}

\begin{abstract} 

Let $\pi$ be a cuspidal automorphic representation of $\operatorname{GL}_2$ over a totally real number field $F$. 
Let $K$ be a totally imaginary quadratic extension of $F$. We estimate central values of the 
$\operatorname{GL}_2 \times \operatorname{GL}_2$ Rankin-Selberg $L$-functions associated to $\pi$ times representations 
induced from Hecke characters of $K$ which are ramified only at a given prime ideal $\mathfrak{p}$ of $F$. 
More specifically, we use spectral decompositions of shifted convolution sums and relations to Fourier-Whittaker 
coefficients of genuine and non-genuine 
metaplectic forms to obtain nonvanishing estimates, averaging over primitive ring class characters of a given exact order. 
When $\pi$ corresponds to a holomorphic Hilbert modular form of arithmetic weight $k \geq 2$, 
we then derive finer results from the rationality theorems of Shimura, together with the existence of suitable 
$\mathfrak{p}$-adic $L$-functions. This allows us to generalize the theorems of Rohrlich, Vatsal, and Cornut-Vatsal 
to this setting. Finally, in a self-contained appendix, we explain how to use these results to deduce bounds 
for Mordell-Weil ranks of the associated $\operatorname{GL}_2$-type abelian varieties via existing Iwasawa main conjecture divisibilities. 

\end{abstract}

\maketitle
\tableofcontents

\section{Introduction}

Let $F$ be a totally real number field of degree $d = [F: {\bf{Q}}]$, ring of integers $\mathcal{O}_F$, and ring of adeles ${\bf{A}}_F$. 
Let $\pi = \otimes_v \pi_v$ be a cuspidal automorphic representation of $\operatorname{GL}_2({\bf{A}}_F)$ of conductor
$c(\pi) \subset \mathcal{O}_F$ and unitary central character $\omega = \omega_{\pi}$. 
Let $K$ be a totally imaginary quadratic extension of $F$ of relative discriminant 
$\mathfrak{D} = \mathfrak{D}_{K/F} \subset \mathcal{O}_F$, absolute discriminant $D_K = {\bf{N}} \mathfrak{D}$, 
and associated idele class character $\eta = \eta_{K/F}$ of $F$. 
Let $\mathcal{W}$ be a finite-order Hecke character of $K$ of the following type. 
We consider the product $\mathcal{W} = \rho \chi \circ {\bf{N}}$ of a ring class character $\rho$ of $K$ 
times a character $\chi \circ {\bf{N}}$ arising via composition with the norm homomorphism 
${\bf{N}} = {\bf{N}}_{K/{\bf{Q}}}: K \rightarrow {\bf{Q}}$ from some Dirichlet character $\chi$ (the basechange of $\chi$ to $K$). 
The ring class character $\rho$ is a character of the class group of the $\mathcal{O}_F$-order 
$\mathcal{O}_{\mathfrak{c}} = \mathcal{O}_F + \mathfrak{c} \mathcal{O}_K$ of some conductor $\mathfrak{c} \subset \mathcal{O}_F$, 
and determines a finite-order idele class character\footnote{Composing with the reciprocity map of class field theory, 
such a character $\rho$ factors through the Galois group of the ring class field $K[\mathfrak{c}]$ of conductor $\mathfrak{c}$ over $K$, which is 
of generalized dihedral type over $F$, and moreover linearly disjoint over $K$ to the cyclotomic ${\bf{Z}}_p$-extension of $K$ for any prime $p$. 
For this reason, such a character $\rho$ is often said to be {\it{dihedral}} or {\it{anticyclotomic}}.} 
\begin{align*} \rho: {\bf{A}}_K^{\times}/ K_{\infty}^{\times} K^{\times} \widehat{\mathcal{O}}_{\mathfrak{c}}^{\times} &\longrightarrow {\bf{S}}^1. \end{align*} 
Here, ${\bf{A}}_K$ denotes the ring of adeles of $K$, with archimedean local component $K_{\infty} = K \otimes_{\bf{Q}} {\bf{C}}$.
We write ${\bf{A}}_K^{\times}$ to denote the ideles of $K$, and $\widehat{\mathcal{O}}_{\mathfrak{c}}^{\times}$ 
the units of the profinite completion of the order $\mathcal{O}_{\mathfrak{c}}$.
The character $\chi \circ {\bf{N}}$ on the other hand factors through some cyclotomic extension of $K$, 
and so we shall sometimes call it ``cyclotomic". To each such Hecke character $\mathcal{W}$ of $K$, 
we have an induced automorphic representation $\pi(\mathcal{W}) = \otimes_v \pi(\mathcal{W})_v$ of $\operatorname{GL}_2 ({\bf{A}}_F)$, 
and can then consider the corresponding $\operatorname{GL}_2 \times \operatorname{GL}_2$ Rankin-Selberg $L$-function
\begin{align}\label{fei} L(s, \pi \times \mathcal{W}) 
&= L(s, \pi \times \pi(\mathcal{W})) = \prod_{v \nmid \infty} L(s, \pi_v \times \pi(\mathcal{W})_v). \end{align} 
This degree-four $L$-function has an analytic continuation thanks to the theory of Jacquet \cite{Ja} 
and Jacquet-Langlands \cite{JL}; its completed $L$-function $\Lambda(s, \pi \times \pi(\mathcal{W})) = L_{\infty}(s) L(s, \pi \times \mathcal{W})$ 
satisfies a functional equation
\begin{align*}\Lambda(s, \pi \times \mathcal{W}) 
&= \epsilon(s, \pi \times \mathcal{W}) \Lambda(1-s, \widetilde{\pi} \times \mathcal{W}^{-1}),\end{align*} 
where $\epsilon(s, \pi \times \mathcal{W}) = c(\pi \times \mathcal{W})^{s - \frac{1}{2}} \epsilon(1/2, \pi \times \mathcal{W})$
denotes the $\epsilon$-factor, with $c(\pi \times \mathcal{W})$ the conductor of the $L$-function, and 
$\epsilon(1/2, \pi \times \mathcal{W}) \in {\bf{S}}^1$ the root number of $L(s, \pi \times \mathcal{W})$. 
If $\pi \cong \widetilde{\pi}$ is self-contragredient and $\mathcal{W} = \rho$ is a ring class character, 
then this functional equation $(\ref{fei})$ is symmetric in the sense that it relates the same $L$-function on each side,
\begin{align}\label{symmetricFE} \Lambda(s, \pi \times \rho) = \epsilon(s, \pi \times \rho) \Lambda(1-s, \pi \times \rho), \end{align}
and the root number $\epsilon(1/2, \pi \times \rho) \in \lbrace \pm 1 \rbrace$ is real-valued. This is a consequence of the fact
that ring class characters are equivariant with respect to complex conjugation (cf.~\cite{Ro2}). 
Hence if $\epsilon(1/2, \pi \times \rho) = -1$ in this setting, then the functional equation $(\ref{symmetricFE})$ forces the vanishing 
of the central value:  $\Lambda(s, \pi \times \rho) = L(s, \pi \times \rho) = 0$. 

Let us now fix a prime ideal $\mathfrak{p} \subset \mathcal{O}_F$ with underlying rational prime $p$. 
It can be seen via direct calculation (see e.g.~$(\ref{RNF})$ below) that as one varies over ring class 
characters $\rho$ of $K$ of $\mathfrak{p}$-power conductor, the root number $\epsilon(1/2, \pi \times \rho) \in {\bf{S}}^1$ 
is generically independent of the choice of ring class character $\rho$. In particular, when $\pi \cong \widetilde{\pi}$ 
is self-contragredient, there exists an integer $\nu = \nu(\pi, K, p) \in \lbrace 0, 1 \rbrace$ such that $\epsilon(1/2, \pi \times \rho) = (-1)^{\nu}$ 
for all but finitely many ring class characters $\rho$ of $K$ of $\mathfrak{p}$-power conductor. This allows us to characterize 
(and discard\footnote{The methods we develop here can be applied to study the central derivative values $L'(1/2, \pi \times \rho)$
in this setting, but at the expense of clarity, and so we save this task for another work. Note that this setting has already been addressed
when $\pi$ corresponds to a holomorphic Hilbert modular form of parallel weight two by Cornut \cite{Cor} and Cornut-Vatsal \cite{CV}.}) 
the setting of forced vanishing via the functional equation $(\ref{fei})$ corresponding to the case of 
$\nu = \nu(\pi, D, p) =1$ when $\pi \cong \widetilde{\pi}$ is self-contragredient. In all other settings, 
we seek to determine how seldom the values $L(1/2, \pi \times \mathcal{W}) = L(1/2, \pi \times \rho \chi \circ {\bf{N}})$ vanish as (i) $\rho$ varies 
over ring class characters of $\mathfrak{p}$-power conductor and (ii) $\chi$ varies over Dirichlet characters of a given $p$-power conductor. 

We begin by fixing a (possibly trivial) primitive Dirichlet character $\chi \bmod p^{\beta}$ for some integer $\beta \geq 0$.
We consider the average of central values of the Rankin-Selberg $L$-function of $\pi \otimes \chi$ times $\pi(\rho)$, where 
$\rho$ ranges over primitive ring class characters of $K$ of exact order $p^x$ (and some corresponding conductor $\mathfrak{p}^{\alpha}$). 
That is, for a sufficiently large integer $\alpha \gg 1$, we consider averages over primitive ring class characters of conductor
$\mathfrak{p}^{\alpha}$, as well as certain subaverages corresponding to ring class characters with a given ``tamely ramified" part (cf.~\cite{Va}, \cite{CV})
or a given exact $p$-power order.
We then explain how to refine these results, using Shimura's rationality theorem and specializations of multivariable $p$-adic $L$-functions. 
This allows us to generalize the nonvanishing theorems of Rohrlich \cite{Ro}, \cite{Ro2}, Vatsal \cite{Va}, and Cornut-Vatsal \cite{CV} 
to the central values $L(1/2, \pi \times \rho \chi \circ {\bf{N}})$, when $\pi$ corresponds to a holomorphic Hilbert modular form of weight 
$k = (k_j)_{j=1}^d$ with each $k_j \geq 2$. As we explain, this has various applications to $p$-adic $L$-functions constructions in the 
Iwasawa theory literature, and has to date been treated as a standard hypothesis. This in fact forms the main motivation for the work. 
The main arguments however come from analytic number theory, and moreover develop ideas from the analytic theory of automorphic forms. 

We first show the following analytic result. Here, we derive integral presentations for our underlying averages we consider in terms of Fourier 
coefficients of certain automorphic forms on $\operatorname{GL}_2({\bf{A}}_F)$ and its two-fold metaplectic cover. 
This allows us to use spectral decompositions of the corresponding automorphic forms to derive bounds, leading us to the following implications. 
Given a sufficiently large integer $\alpha \gg 1$, we consider the set $C(\alpha)^{\vee}$ of ring 
class characters of conductor $\mathfrak{p}^{\alpha}$, 
and in particular the subset $P(\alpha) \subset C(\alpha)^{\vee}$ of primitive characters which do not factor through $C(\alpha-1)^{\vee}$. 
We also consider the following subaverages of these primitive characters, in the style of Cornut-Vatsal \cite{CV}. 
Writing $C(\infty) = \varprojlim_{\alpha \geq 0} C(\alpha)$ to denote the inverse limit, let $C_0 = C(\infty)_{\operatorname{tors}}$
denote the finite torsion subgroup. Given a character $\rho_0$ of $C_0$, we also consider weighted 
subaverages over primitive characters $P(\alpha, \rho_0) \subset P(\alpha) \subset C(\alpha)^{\vee}$ of conductor $\mathfrak{p}^{\alpha}$
which induce this chosen character $\rho_0$ on the torsion subgroup $C_0$ of $C(\infty)$. 
As well, writing $x = \operatorname{ord}_p(\#C(\alpha))$,
we consider the subaverages over primitive ring class characters of $C(\alpha)$ of exact order $p^x$.

\begin{theorem}[Theorem \ref{RCOD}, Corollary \ref{RCGAnv}]\label{MAIN} 

Let $\pi$ be a cuspidal $\operatorname{GL}_2({\bf{A}}_F)$-automorphic representation of conductor $c(\pi) \subset \mathcal{O}_F$,
which we assume to be self-dual, $\pi \cong \widetilde{\pi}$.
Fix a prime ideal $\mathfrak{p} \subset \mathcal{O}_F$ with underlying rational prime $p$. Let $K$ be a totally imaginary quadratic 
extension of $F$ of relative discriminant $\mathfrak{D} \subset \mathcal{O}_F$ and associated character $\eta$ of $F$. 
Assume that $(c(\pi), \mathfrak{D}\mathfrak{p}) = (\mathfrak{p}, \mathfrak{D}) = 1$. 
Fix a primitive even Dirichlet character $\chi \bmod p^{\beta}$ for some integer $\beta \geq 0$. 
In the special case where $\beta=0$, let us also also assume that the generic root number $\epsilon(1/2, \pi \times \rho) = (-1)^{\eta}$ 
described above is equal to $1$ (as opposed to $-1$).\footnote{to rule out forced vanishing from the functional equation}
If the exponent $\alpha \gg 1$ is sufficiently large relative to $\beta$, then there exists a primitive ring class character $\rho$ of 
conductor $\mathfrak{p}^{\alpha}$, for which $L(1/2, \pi \times \rho \chi \circ {\bf{N}}) \neq 0$. 
Moreover, for any choice of character $\rho_0$ of $C_0$, there exists a character $\rho(\alpha, \rho_0)$ 
for which $L(1/2, \pi \times \rho \chi \circ {\bf{N}}) \neq 0$. Writing $x = \operatorname{ord}_p(\#C(\alpha))$,
we can also find a primitive ring class character $\rho$ of conductor $\mathfrak{p}^{\alpha}$ and exact order $p^x$
for which $L(1/2, \pi \times \rho \chi \circ {\bf{N}}) \neq 0$. \end{theorem}

More generally, using these main results as input, we then go on to show the following refinements. 
First, we use the rationality theorem of Shimura \cite{Sh} to deduce the following generic nonvanishing properties, 
generalizing the theorems of Rohrlich \cite{Ro}, \cite{Ro2}, \cite{Ro3}, Vatsal \cite{Va}, and Cornut-Vatsal \cite{CV}. 

\begin{theorem}[Theorem \ref{GAnv}] 

Let $\pi \cong \widetilde{\pi}$ be a cuspidal $\operatorname{GL}_2({\bf{A}}_F)$-automorphic representation of conductor $c(\pi) \subset \mathcal{O}_F$ 
which corresponds to a holomorphic Hilbert modular form of weight $k = (k_j)_{j=1}^d$ with each $k_j \geq 2$. 
Fix a prime ideal $\mathfrak{p} \subset \mathcal{O}_F$ with underlying rational prime $p$.
Let $K/F$ be a totally imaginary quadratic extension of relative discriminant $\mathfrak{D} \subset \mathcal{O}_F$
and absolute discriminant $D_K$.
Assume that $(c(\pi), \mathfrak{D}\mathfrak{p}) = (\mathfrak{p}, \mathfrak{D}) = 1$. 
Assume the Hecke field ${\bf{Q}}(\pi)$ of $\pi$ is linearly disjoint over ${\bf{Q}}$ 
to the cyclotomic tower obtained by adjoining all $p$-power roots of unity ${\bf{Q}}(\zeta_{p^{\infty}}) = \bigcup_{n \geq 1} {\bf{Q}}(\zeta_{p^n})$. 
Fix a primitive even Dirichlet character $\chi \bmod p^{\beta}$ for some integer $\beta \geq 0$. 
In the event that $\beta =0$ (hence $\chi$ trivial), let 
us also assume that the generic root number $\epsilon(1/2, \pi \times \rho)$ for $\rho$ 
ranging over primitive ring class characters characters of conductor $\mathfrak{p}^{\alpha}$ with $\alpha \gg 1$ sufficiently large 
is not equal to $-1$. Then for each sufficiently large integer $\beta \geq 1$, there exists a primitive ring class character $\rho$ 
of conductor $\mathfrak{p}^{\alpha}$ for which the Galois average $G_{[\rho \chi \circ {\bf{N}}]}(\pi)$ does not vanish, 
and so $L(1/2, \pi \times \mathcal{W}) = L(1/2, \pi \times \rho \chi \circ {\bf{N}})$ does not vanish for $\mathcal{W} = \rho \chi \circ {\bf{N}}$ 
ranging over such Hecke characters taking values in roots of unity of exact order $\operatorname{lcm}(p^{\beta}, \operatorname{ord}(\rho))$,
i.e.~where $\operatorname{ord}(\rho) \mid (\#C(\alpha) - \#C (\alpha-1))$ denotes the exact order of the character $\rho$. 
\end{theorem}

We can also derive the following results via specialization of suitable multivariable $\mathfrak{p}$-adic $L$-functions.
Recall that a for a fixed prime ideal $\mathfrak{p} \subset \mathcal{O}_F$ with underlying rational prime $p$, 
a $\operatorname{GL}_2({\bf{A}}_F)$-automorphic representation is said to be $\mathfrak{p}$-ordinary if its 
Hecke eigenvalue at $\mathfrak{p}$ is an algebraic number whose image under a fixed embedding 
$\overline{\bf{Q}} \rightarrow \overline{\bf{Q}}_p$ is a $p$-adic unit. Let $K_{\infty}^{(\mathfrak{p} )}$ denote the compositum 
of the tower of all ring class extensions of $K$ of $\mathfrak{p}$-power conductor with the cyclotomic extension obtained by 
adjoining all $p$-power roots of unity to $K$. Let $\mathcal{G} = \operatorname{Gal}(K_{\infty}^{(\mathfrak{p})}/K)$ denote its Galois group. 
Composing with the Artin reciprocity map, we view the characters $\mathcal{W}$ of $K$ described above as finite order
characters factoring through $\mathcal{G}$. Let us also write $\Omega \approx {\bf{Z}}_p^{\delta}$ to denote the Galois group of the anticyclotomic 
${\bf{Z}}_p^{\delta}$-extension of $K$, where $\delta = \delta_{\mathfrak{p}} = [F_{\mathfrak{p}}: {\bf{Q}}_p]$ 
denotes the residue degree of $\mathfrak{p}$, and $\Gamma \cong {\bf{Z}}_p$ the Galois group of the cyclotomic ${\bf{Z}}_p$-extension of $K$. 

\begin{theorem}[Theorem \ref{ACmu}, cf.~Corollary \ref{cycmu}]

Let $\pi \cong \widetilde{\pi}$ be a cuspidal $\operatorname{GL}_2({\bf{A}}_F)$-automorphic representation of conductor $c(\pi) \subset \mathcal{O}_F$ 
which corresponds to a holomorphic Hilbert modular form of weight $k = (k_j)_{j=1}^d$ with each $k_j \geq 2$. 
Fix a prime $\mathfrak{p} \subset \mathcal{O}_F$ with underlying rational prime $p$.
Let $K/F$ be a totally imaginary quadratic extension of relative discriminant $\mathfrak{D}\subset \mathcal{O}_F$ 
and absolute discriminant $D_K$.
Assume that $\pi$ is $\mathfrak{p}$-ordinary, that $(c(\pi), \mathfrak{D}\mathfrak{p}) = (\mathfrak{p}, \mathfrak{D}) = 1$, 
and that the residual Galois representation associated to $\pi$ by constructions Carayol \cite{Ca}, Taylor \cite{Ta}, and Wiles \cite{Wi} is absolutely irreducible. 
Fix a character $\mathcal{W}_0$ of the torsion subgroup $\mathcal{G}_{\operatorname{tors}}$ of $\mathcal{G}$. There exists a minimal exponent 
$\alpha_0 \geq 0$ such that for all characters $\rho_w$ of $\Omega$ of exact order $p^{\alpha}$ with $\alpha \geq \alpha_0$, the central value 
$L(1/2, \pi \times \mathcal{W}_0 \rho_w \psi_w)$ does not vanish for any character $\psi_w$ of the cyclotomic Galois group $\Gamma \cong {\bf{Z}}_p$.

\end{theorem} 

Finally, we derive the following arithmetical applications of these results in a self-contained appendix. 
Namely, we use work of Xin Wan \cite{XW} on the Iwasawa main conjecture together with existing work on anticyclotomic main
conjectures (e.g.~\cite{Nek}) to deduce the following results subject to standard technical hypotheses. 
Fix $\pi$ a cuspidal $\operatorname{GL}_2({\bf{A}}_F)$-automorphic representation corresponding 
to a holomorphic Hilbert modular form of parallel weight two and trivial character. Writing ${\bf{Q}}(\pi)$
again to denote the Hecke field of $\pi$, le us assume (as is often known) that we can attach to $\pi$ an 
abelian variety $A = A_{\pi}$ defined over $F$ such that (i) the dimension of $A$ is equal to the degree of ${\bf{Q}}(\pi)$,
(ii) the ring of endomorphisms $\operatorname{End}_F(A)$ is given by the ring of integers $\mathcal{O}_{{\bf{Q}}(\pi)}$ of $\pi$, 
and (iii) the Hasse-Weil $L$-function $L(s, A/F)$ of $A/F$ is given by that of $\pi$, in other words: $L(s-1/2, \pi) = (2/2\pi) \Gamma(s) L(s, A/F)$.
Let $K_{\infty}$ denote the compositum of the anticyclotomic ${\bf{Z}}_p^{\delta}$-extension of $K$ with the 
cyclotomic ${\bf{Z}}_p$-extension of $K$, where $\delta = [F_{\mathfrak{p}}: {\bf{Q}}_p]$ again denotes the residue degree of $\mathfrak{p}$.
Hence, the corresponding Galois group $G = \operatorname{Gal}(K_{\infty}/K)$ is isomorphic as a topological group to ${\bf{Z}}_p^{\delta + 1}$. 
We can then consider the Mordell-Weil group $A(K_{\infty})$ of $K_{\infty}$-rational points of $A$. 
We can also consider the corresponding Tate-Shafarevich group $\Sha(A/K_{\infty})$, 
or more precisely its $p$-primary subgroup $\Sha(A/K_{\infty})[p^{\infty}]$. 
We deduce the following theorems for these groups, subject to the various hypotheses in \cite{XW} (as summarized below) and our discussion of $p$-adic $L$-functions: 

\begin{theorem}[Theorem \ref{isotyp}]

Let $\pi \cong \widetilde{\pi}$ be a cuspidal $\operatorname{GL}_2({\bf{A}}_F)$-automorphic representation corresponding to a holomorphic 
Hilbert modular form of parallel weight two and trivial character. Assume that $\pi$ has associated to it an abelian variety 
$A = A_{\pi}$ as described above, and fix a prime $p \geq 5$. 
Let $K/F$ be a totally imaginary quadratic extension of 
relative discriminant $\mathfrak{D}\subset \mathcal{O}_F$ and absolute discriminant $D_K$.
Assume the conditions of Hypothesis \ref{XW} and Theorem \ref{IMC} 
below are met, that $(c(\pi), \mathfrak{D}\mathfrak{p}) = (\mathfrak{p}, \mathfrak{D}) = 1$, and that the Hecke field ${\bf{Q}}(\pi)$ is linearly 
disjoint over ${\bf{Q}}$ the cyclotomic tower obtained by adjoining all $p$-power roots of unity ${\bf{Q}}(\zeta_{p^{\infty}})$. 
Let $\rho = \rho_w$ be a ring class character factoring through the Galois group $G = \operatorname{Gal}(K_{\infty}/K)$. 
There exists an integer $\beta_0(\rho)$ such that for all characters $\psi = \psi_w$ of the cyclotomic Galois group 
$\Gamma = \operatorname{Gal}(K^{\operatorname{cyc}}/K)$ of exact order $p^{\beta}$ with $\beta \geq \beta_0(\rho)$, 
the central value $L(1/2, \pi \times \rho \psi)$ does not vanish, and hence the corresponding $\rho \psi$-isotypical components 
of both $A(K_{\infty})$ and $\Sha(A/K_{\infty})[p^{\infty}]$ are finite. 
\end{theorem}

To state the second result in a concise way, let us also write $\epsilon = \epsilon(1/2, A/K)$ to denote the sign in the functional equation
of the Hasse-Weil $L$-function $L(s, A/K)$ of $A$ over $K$.

\begin{theorem}[Theorem \ref{rank}]

Let $\pi \cong \widetilde{\pi}$ be a cuspidal $\operatorname{GL}_2({\bf{A}}_F)$-automorphic representation corresponding to a holomorphic 
Hilbert modular form of parallel weight two and trivial character. 
Let $K/F$ be a totally imaginary quadratic extension of relative discriminant $\mathfrak{D}\subset \mathcal{O}_F$ 
and absolute discriminant $D_K$.
Assume that $\pi$ has associated to it an abelian variety $A = A_{\pi}$ 
as described above, and that $(c(\pi), \mathfrak{D}\mathfrak{p}) = (\mathfrak{p}, \mathfrak{D}) = 1$. 
Assume as well that the following conditions hold: (1) if $A$ acquires CM after basechange to some quadratic extension 
$K_{\pi}/F$, then this extension $K_{\pi}$ is not contained in $K_{\infty}$ when $\epsilon = +1$, 
and (2) $A$ has good ordinary reduction at all primes above $p$ in $F$. 
Finally, if the residue degree $\delta = [F_{\mathfrak{p}}: {\bf{Q}}_p]$ is greater than one, let us assume additionally that the conditions of Theorems 
\ref{GAnv}, \ref{ACmu}, and \ref{IMC} below (including the vanishing of the anticyclotomic $\mu$-invariant) hold, and that the absolute discriminant 
$D_K$ is sufficiently large. Then, $A(K_{\infty})$ is finitely generated if $\epsilon = +1$, 
and $A(K_{\infty})/A(D_{\infty})$ is finitely generated if $\epsilon = -1$. \end{theorem}

\subsection{Outline of the proof of Theorem \ref{MAIN}}

Let us now give a high-level sketch of how the main analytic result Theorem \ref{MAIN} is derived. 
We consider the weighted average $P_{\alpha}(\pi, \chi)$ over central values $L(1/2, \pi \times \rho \chi \circ {\bf{N}})$ 
with $\rho$ varying over primitive ring class characters of conductor $\mathfrak{p}^{\alpha}$, as well as the subaverage 
$P_{\alpha, \rho}(\pi, \chi)$ with $\rho \in P(\alpha, \rho_0)$ varying over those characters inducing a given character $\rho_0$ 
on the torsion subgroup $C_0 = C(\infty)_{\operatorname{tors}}$.  
More precisely, we describe the values in the average using an unbalanced approximate functional equation, 
and this reduces us to looking at sums of the form $D_{A, 1}(\pi, \chi; Z) + D_{A, 2}(\pi, \chi; Z)$ for any choice of real parameter $Z >0$, where 
\begin{align*} D_{A, 1}(\pi, \chi; Z) &= \frac{1}{w_K} \sum_{\mathfrak{m} \subset \mathcal{O}_F} 
\frac{\omega \eta(\mathfrak{m}) \chi^2({\bf{N}} \mathfrak{m})}{{\bf{N}} \mathfrak{m}} 
\sum_{a, b \in \mathcal{O}_F / / \mathcal{O}_F^{\times}} \frac{ \lambda(f_A(a, b))\chi(f_A(a,b)) }{{\bf{N}} f_A(a, b)^{\frac{1}{2}}} 
V_1 \left( {\bf{N}}(\mathfrak{m}^2 f_A(a,b)) Z\right) \end{align*} and 
\begin{align*} D_{A, 2}(\pi, \chi; Z) &= \frac{1}{w_K} \sum_{\mathfrak{m} \subset \mathcal{O}_F} 
\frac{ \overline{\omega} \eta(\mathfrak{m}) \overline{\chi}^2({\bf{N}} \mathfrak{m})}{{\bf{N}} \mathfrak{m}} 
\sum_{a, b \in \mathcal{O}_F / \mathcal{O}_F^{\times}} \frac{\overline{ \lambda(f_A(a, b))\chi(f_A(a, b)) } }{{\bf{N}}f_A(a, b)^{\frac{1}{2}}} 
V_2 \left( \frac{ {\bf{N}}(\mathfrak{m}^2 f_A(a,b))}{ Z C  }\right). \end{align*}
Here, the sums run over pairs of $F$-integers $a, b \in \mathcal{O}_F$ modulo the action of units $\mathcal{O}_F^{\times}$,
and can be viewed equivalently as sums over principal ideals in $(a), (b) \subset \mathcal{O}_F$.
The coefficients $\lambda = \lambda_{\pi}$ are the $L$-functions coefficients of $\pi$, 
so that the finite part of the $L$-function of $\pi$ has the expansion 
$L(s, \pi) = \sum_{\mathfrak{n} \subset \mathcal{O}_F} \lambda(\mathfrak{n}) {\bf{N}} \mathfrak{n}^{-s}$ for $\Re(s) >1$, 
writing ${\bf{N}}$ as usual to denote the absolute norm. The function $f_A(x, y) = a_A x^2 + b_A xy + c_A y^2$ 
denotes a fixed $F$-rational positive definite binary quadratic form ($a_A, b_A, c_A \in \mathcal{O}_F$) representing
the class $A$ in the class group $C(\alpha)$ of the order
$\mathcal{O}_{\mathfrak{p}^{\alpha}} = \mathcal{O}_F + \mathfrak{p}^{\alpha} \mathcal{O}_K$, and $w_K$ denotes the number of automorphs of $f_A$.
We assume that $f_A(x, y)$ is the reduced class representative, and hence ${\bf{N}} b_A \leq {\bf{N}} a_A \leq {\bf{N}}c_A$.
Let us remark as well that only the principal class contributes to the primitive average $P_{\alpha}(\pi, \chi)$, 
and only classes factoring through the torsion subgroup $C_0$ to the corresponding subaverage $P_{\alpha, \rho_0}(\pi, \chi)$.
As we explain later, there are constraints on the possible coefficients $a_A$ we can consider with this method, 
although a variation which we develop via decompositions into Poincar\'e series in the style of \cite{VB} (see Theorem \ref{SCS2}) 
allows us to proceed in a conceptually similar way irrespective of the relative sizes of these coefficients. 
The $\mathfrak{m}$-sums run over nonzero integral ideals in $\mathcal{O}_F$, 
and the $a, b$-sums over nonzero $F$-integers. The functions $V_j$ are smooth and rapidly decaying 
cutoff functions coming from our choice of approximate functional equation (Lemma \ref{AFE}).
Finally, writing $c(\pi_K) \subset \mathcal{O}_K$ to denote the conductor of the quadratic basechange
representation $\pi_K = \operatorname{BC}_{K/{\bf{Q}}}(\pi)$ associated to $\pi$, 
\begin{align*} C = {\bf{N}}(\mathfrak{D}^2 c(\pi_K) \cdot c(\rho \cdot \chi \circ {\bf{N}})^2) =
{\bf{N}}\left( \mathfrak{D}^2 c(\pi_K) \cdot \left( \operatorname{lcm}(\mathfrak{p}^{\alpha}, p^{\beta} \mathcal{O}_F) \mathcal{O}_K\right)^2 \right)
= {\bf{N}}(\mathfrak{D}^2 c(\pi_K)) \cdot p^{4d \max(\alpha, \beta)}\end{align*} 
denotes the conductor of each $L$-function appearing in each of the averages $P_{\alpha}(\pi, \chi)$ and $G_{\alpha}(\pi, \chi; x)$. 

We present two methods of estimating the sums $D_{A, j}(\pi, \chi; Z)$, both using Kirillov models to derive novel integral
presentations for the sums we consider in terms of Fourier-Whittaker coefficients of some distinct (non-$\mathcal{K}$-finite) automorphic
forms which can then be decomposed spectrally to derive estimates. 
On the one hand, we can approximate off-diagonal sums in terms of metaplectic Fourier-Whittaker coefficients after 
taking the unbalancing parameter $Z $ of size approximately ${\bf{N}}(\mathfrak{D} \mathfrak{p}^{2 \alpha})^{-1}$ 
to be the inverse of the discriminant of the order $\mathcal{O}_{\mathfrak{p}^{\alpha}} = \mathcal{O}_F + \mathfrak{p}^{\alpha} \mathcal{O}_K$, 
i.e.~essentially the inverse of the square root of the conductor $C$. 
Here, the contributions from $b=0$ terms are estimated in terms of a residual Dirichlet series related to the symmetric square 
$L$-function $L(s, \operatorname{Sym}^2 \pi \otimes \chi \circ {\bf{N}})$ at $s=1$ (Proposition \ref{RCres}). 
These values in particular do not vanish, and moreover can be bounded from below 
in terms of the conductor (see \cite{GHL}, \cite[(1.5)]{CM}, and also $(\ref{lower})$ below). 
As we explain for Theorem \ref{SCS} and Theorem \ref{RCOD} below (see also Theorem \ref{binary}), 
the remaining coefficients in the expression for $D_{A, 1}(\pi, \chi; Z)$ can be described equivalently in terms of Fourier coefficients of  
certain automorphic forms on $\operatorname{GL}_2({\bf{A}}_F)$ and its two fold metaplectic cover $\overline{G}({\bf{A}}_F)$. 
This enables us to use spectral decompositions of such forms to estimate these sums. 
For instance, taking the $Z = Y^{-1} = C^{- \frac{1}{2}}$ of size approximately ${\bf{N}}(\mathfrak{D} \mathfrak{p}^{2 \alpha})^{-1}$ so that the length
of the sums is equal to the square root of the conductor (corresponding to a balanced approximate functional equation formula), 
and assuming $D_K \equiv 0 \bmod 4$, we use spectral decompositions of shifted convolution sums to derive the following 
estimate for the average over primitive ring class characters (see Proposition \ref{RCres} and Theorem \ref{RCOD} (i)): 
For $\alpha \gg 1$ sufficiently large, 
\begin{equation*}\begin{aligned} &P_{\alpha}(\pi, \chi)\\ 
&= \left( 1 - \frac{1}{{\bf{N}} \mathfrak{p}} \right) \frac{1}{w_K} \left( L(1, \omega \eta \chi^2 \circ {\bf{N}}) \cdot 
\frac{ L_{\bf{1}}^{\star}(1, \operatorname{Sym}^2 \pi \otimes \chi \circ {\bf{N}}) }{ L_{\bf{1}}^{\star}(2, \omega \chi^2 \circ {\bf{N}})} 
+ \epsilon \cdot \frac{\widetilde{L}_{\infty}(\frac{1}{2})}{L_{\infty}(\frac{1}{2})} \cdot L(1, \overline{\omega} \eta \overline{\chi}^2 \circ {\bf{N}}) \cdot 
\frac{ L_{\bf{1}}^{\star}(1, \operatorname{Sym}^2 \widetilde{\pi} \otimes \overline{\chi} \circ {\bf{N}}) }{ L_{\bf{1}}^{\star}(2, \overline{\omega} \overline{\chi}^2 \circ {\bf{N}})} \right) \\ 
&+ O_{\pi, \varepsilon} \left(  \left( D_K p^{2 d \beta} \right)^{\frac{1}{4} - \frac{(1-2\theta_0)}{16} + \varepsilon}  Y^{-\frac{1}{4} - \varepsilon} \right)
+ O_{\pi, \chi, \varepsilon} \left( Y^{\frac{1}{4} + \delta_0 + \varepsilon} {\bf{N}}(\mathfrak{D} \mathfrak{p}^{2 \alpha})^{-\frac{1}{2}} \right), \end{aligned}\end{equation*}
Here, $L_{\bf{1}}^{\star}(s, \operatorname{Sym}^2 \pi \otimes \chi \circ {\bf{N}})$ is essentially the partial Dirichlet series expansion over 
principal ideals of the symmetric square $L$-function $L(s, \operatorname{Sym}^2 \pi \otimes \chi \circ {\bf{N}})$ 
(up to a convergent finite product of Euler factors), and $L^{\star}_{\bf{1}}(s, \omega \chi^2 \circ {\bf{N}})$ is defined 
similarly with respect to the Hecke $L$-function $L(s, \omega \chi^2 \circ {\bf{N}})$. 
The \begin{align*} \epsilon = \epsilon(1/2, \pi \times \rho \chi \circ {\bf{N}}) &= 
\omega(\operatorname{lcm}(\mathfrak{p}^{\alpha}, p^{\beta} \mathcal{O}_F)) \cdot \eta(p^{4 \beta} \mathfrak{d}c(\pi)) \cdot 
\epsilon(1/2, \pi) \cdot \chi( {\bf{N}}(\mathfrak{d}^2 c(\pi)^2 \mathfrak{D}^8)) 
\cdot \left( \frac{\tau(\chi^2)}{p^{\frac{\beta}{2}}} \right)^{4d}\end{align*} 
denotes the (stable) root number for each primitive ring class character $\rho$ of conductor $\mathfrak{p}^{\alpha}$ in the average
(see Proposition \ref{RNF} and Lemma \ref{RCGA}), $L_{\infty}(s)$ the archimedean local factor of each completed $L$-function 
$\Lambda(s, \pi \times \rho \times \chi \circ {\bf{N}})$, and $\widetilde{L}_{\infty}(s)$ that of 
each $\Lambda(s, \widetilde{\pi} \times \rho \overline{\chi} \circ {\bf{N}})$. 
As well, we write $0 \leq \theta_0 \leq 1/2$ to denote the best approximation to the 
generalized Ramanujan conjecture for $\operatorname{GL}_2({\bf{A}}_F)$-automorphic
forms, with $\theta_0 = 7/64$ an admissible choice by theorem of Blomer-Brumley \cite{BBRP}. 
The Burgess-like exponent $1/4 - (1-2\theta_0)/16$ of Wu \cite{Wu2} in the first error term can then be taken to be $206/1024$. 
We refer to Theorems \ref{SCS} and \ref{RCOD} for how the off-diagonal bounds are proved.
The exponent in the error term here has the more organic form $- 1/4 + \delta_0 + \varepsilon$,
where $0 \leq \delta_0 \leq 1/4$ denotes the best approximation to the generalized Lindel\"of hypothesis for
$\operatorname{GL}_2({\bf{A}}_F)$-automorphic forms in the level aspect, or what is the same -- by the theorem
of Kohnen-Zagier \cite{KZ} and more generally Baruch-Mao \cite{BM07} -- the best exponent approximation for the 
Fourier coefficients of half-integral weight forms. That is, this exponent reflects our approximation of the off-diagonal sum 
over $b \neq 0$ contributions (in the region of moderate decay for $V_1$) by Fourier coefficients of genuine metaplectic forms, 
which we decompose spectrally to derive such a bound. We then use the admissible approximation of $\delta_0 = 103/512$ of 
Blomer-Harcos \cite[Corollary]{BH10}. 
We also develop a distinct and flexible variation of this idea via Theorem \ref{SCS2} using decompositions into Poincar\'e series
in the style of the argument of Blomer \cite{VB},
which allows us to deal with the coefficients appearing in the reduced form representative $f_A(x, y)$ corresponding to $A \in C(\alpha)$.
This allows us to deal with an inherent limitation in the standard setup described above which requires $a_A=1$ or 
small relative to $c_A$ and $b_A=0$, and in particular to derive estimates for the sums corresponding to any class $A \in C(\alpha)$.
This latter feature in turn allows us to estimate the tame and Galois sub-averages directly, via purely analytic methods.\footnote{When
the cyclotomic character $\chi$ is trivial, then the theorems of Cornut and Vatsal prove the nonvanishing of the tame sub-averages 
using ergodic theory, i.e.~using the theorems of Ratner and Margulis-Tomanov on $p$-adic unipotent flows.}

In all our main estimates for the sums $D_{A, j}(\pi, \chi; Z)$, the surjectivity of the archimedean local Kirillov map plays a starring role. 
As we explain in Proposition \ref{choice} (cf.~ also $(\ref{or})$), this property allows us to find 
automorphic forms whose Fourier-Whittaker coefficients describe the sums $D_{A, 1}(\pi, \chi; Z)$ and $D_{A, 2}(\pi, \chi; Z)$ we 
consider exactly. Once such presentations are known, the door is open to using spectral decompositions of the corresponding forms, 
and in particular to deriving estimates for both of the sums $D_{A, 1}(\pi, \chi; Z)$ and $D_{A, 2}(\pi, \chi; Z)$ with a flexible choice of 
unbalancing parameter $Z >0$. 

\subsubsection*{Acknowledgements} I am grateful to Valentin Blomer (especially), Henri Darmon, Dorian Goldfeld, Ralph Greenberg, 
Gergely Harcos, Philippe Michel, Djordje Milicevic, Peter Sarnak, Michael Spiess, and Otmar Venjakob for helpful discussions.
I am also grateful to anonymous referees for helpful constructive criticism. 

\section{Mean values} 

\subsection{Rankin-Selberg $L$-functions}

We first review some relevant background from the theory of Jacquet \cite{Ja} and Jacquet-Langlands \cite{JL} for the Rankin-Selberg $L$-functions we consider. 

\subsubsection{Setup and definitions} 

Let us fix a totally imaginary quadratic extension $K$ of $F$ of relative discriminant $\mathfrak{D} = \mathfrak{D}_{K/F}$ 
and associated idele class character $\eta = \eta_{K/F}$ of $F$. Let us also write $D_K = {\bf{N}} \mathfrak{D}$ to denote 
the absolute discriminant of $K$. Let $\mathcal{W}$ denote a Hecke character of $K$, 
with $\pi(\mathcal{W}) = \otimes_v \pi(\mathcal{W})_v$ the associated induced $\operatorname{GL}_2({\bf{A}}_F)$-automorphic representation. 
Equivalently, $\pi(\mathcal{W})$ denotes the automorphic representation of $\operatorname{GL}_2({\bf{A}}_F)$ 
generated by the Hilbert modular theta series $\theta(\mathcal{W})$ associated to $\mathcal{W}$. 
We shall consider the Rankin-Selberg $L$-function of $\pi$ times the induced representation $\pi(\mathcal{W})$, 
whose Euler product over finite places as a function of $s \in {\bf{C}}$ with $\Re(s) > 1$ we express as 
\begin{align*} L(s, \pi \times \mathcal{W}) = L(s, \pi \times \pi(\mathcal{W})) 
&= \prod_{v < \infty} L(s, \pi_v \times \pi(\mathcal{W})_v). \end{align*} 
Here, for primes $v$ not dividing the conductor $c(\pi \times \mathcal{W})$ of $L(s, \pi \times \mathcal{W})$, 
the corresponding local factor $L(s, \pi_v \times \mathcal{W}_v)$ takes the form 
\begin{align*} L(s, \pi_v \times \mathcal{W}_v) &= \det(I - A_v \otimes B_v {\bf{N}} v^{-s})^{-1}, \end{align*} 
where $A_v$ denotes the Satake parameter of $\pi_v$ and $B_v$ that of $\pi(\mathcal{W})_v$. 
For primes $v$ where one of $\pi$ or $\pi(\mathcal{W})$ is ramified, the corresponding local factor $L(s, \pi_v \times \pi(\mathcal{W})_v)$ 
takes the form of $P_v( {\bf{N}} v^{-s})^{-1}$ for $P_v(x)$ a polynomial of degree at most four such that $P_v(0)=1$. 
Let us also write $L(s, \pi_{\infty} \times \pi(\mathcal{W})_{\infty})$ to denote the archimedean component of this $L$-function. 
If $\mathcal{W} = \rho$ is a ring class character, or more generally if $\mathcal{W} = \rho \chi \circ {\bf{N}}$ 
is the product of a ring class character $\rho$ with the composition $\chi \circ {\bf{N}}$ of a primitive even Dirichlet character $\chi$
with the norm homomorphism {\bf{N}}, then $\mathcal{W}$ determines 
a wide ray class character with ``trivial archimedean component" $\mathcal{W}_{\infty} \equiv {\bf{1}}$. 
Consequently, the archimedean local factor of the $L$-function does not depend on the choice of $\mathcal{W}$,
i.e.~as $\pi(\mathcal{W})_{\infty} \equiv {\bf{1}}$ for any such character, and we are justified in dropping the $\mathcal{W}$ from the notation. 
Hence, we write $L_{\infty}(s) = L(s, \pi_{\infty} \times \pi(\mathcal{W})_{\infty})$. The completed $L$-function 
\begin{align*} \Lambda(s, \pi \times \mathcal{W}) &= L(s, \pi \times \mathcal{W}) L(s, \pi_{\infty} \times \pi(\mathcal{W})_{\infty}) \end{align*} 
is entire unless $\pi(\mathcal{W}) \approx \widetilde{\pi} \otimes \vert \cdot \vert^{t}$ for some $t \in {\bf{R}}$, 
and in any case holomorphic except for simple poles at $s=0$ and $1$. It satisfies the functional equation 
\begin{align*} \Lambda(s, \pi \times \mathcal{W}) &= \epsilon(s, \pi \times \mathcal{W}) 
\Lambda(1-s, \widetilde{\pi} \times \overline{\mathcal{W}}),\end{align*} 
where 
\begin{align*} \epsilon(s, \pi \times \mathcal{W}) 
&= \epsilon(1/2, \pi \times \mathcal{W}) c(\pi \times \mathcal{W})^{\frac{1}{2}-s} \end{align*}
denotes the $\epsilon$-factor of $\Lambda(s, \pi \times \mathcal{W})$. 
Here, $\epsilon(1/2, \pi \times \mathcal{W}) \in {\bf{S}}^1$ is the root number. 
Note that this root number also admits an Euler product decomposition 
\begin{align}\label{RN}\epsilon(1/2, \pi \times \mathcal{W}) 
&= \prod_v \epsilon (1/2, \pi_v \times \mathcal{W}_v, \psi_v) = \prod_v \epsilon(1/2, \pi_v \times \pi(\mathcal{W})_v, \psi_v)\end{align} 
(where the local Euler factors are defined with respect to any fixed choice of additive character $\psi = \otimes_v \psi_v$),
and can be given by a more explicit formula when we assume that the conductor $c(\pi)$ is prime to that of $\pi(\mathcal{W})$ (see below). 
As well, we shall write 
\begin{align}\label{conductor} c(\pi \times \mathcal{W}) &=  {\bf{N}} (\mathfrak{D}_{K}^2 c(\pi_K) c(\mathcal{W})^2) \end{align} 
to denote the conductor of $\Lambda(s, \pi \times \mathcal{W})$, where $c(\pi_K)$ denotes that of the basechange 
$\pi_K$ of $\pi$ to $\operatorname{GL}_2({\bf{A}}_K)$, and $c(\mathcal{W})$ that of the Hecke 
character $\mathcal{W}$, viewed as an ideal of $\mathcal{O}_K$ (cf.~\cite[(16)]{BR}). We shall sometimes work with the square 
root of this quantity $Y := c(\pi \times \mathcal{W})^{\frac{1}{2}}$ for our arguments below, and remind the reader that taking (relative) 
norms of the conductor $c(\mathcal{W}) \subset \mathcal{O}_K$ to $\mathcal{O}_F$ or ${\bf{Z}}$ leads to fourth powers of the moduli.

\begin{remark}[Relations to basechange $L$-functions.] 

Note that the $\operatorname{GL}_2({\bf{A}}_F) \times \operatorname{GL}_2({\bf{A}}_F)$ 
Rankin-Selberg $L$-function $\Lambda(s, \pi \times \mathcal{W})$ is equivalent to the 
$\operatorname{GL}_2({\bf{A}}_K) \times \operatorname{GL}_1({\bf{A}}_K)$ $L$-function given by $\Lambda(s, \pi_K \otimes \mathcal{W})$, 
where $\pi_K$ denotes the basechange of $\pi$ to $\operatorname{GL}_1({\bf{A}}_K)$, and $L(s, \pi_K \otimes \mathcal{W})$ 
the $L$-function of $\pi_K$ twisted by the Hecke character $\mathcal{W}$ of $K$. If $\mathcal{W} = {\bf{1}}_K$ is the trivial (class group) Hecke character of $K$, 
then we also have the Artin decomposition $\Lambda(s, \pi_K \times {\bf{1}}_K) = \Lambda(s, \pi) \Lambda(s, \pi \otimes \eta)$. 
In any case, the formula $(\ref{conductor})$ for the conductor $c(\pi \times \mathcal{W})$ is equivalent to that of the conductor of the 
basechange $L$-function $c(\pi_K \otimes \mathcal{W})$, and we have taken the relevant formula for the latter basechange conductor 
as described in \cite[(16)]{BR} (cf.~\cite{Ro3}). \end{remark}

\subsubsection{Explicit description of the root number} 

Let us now give a more explicit description of the root number $(\ref{RN})$ defined above 
(cf.~ \cite[$2.1$]{LRS}, \cite[Proposition 4.1]{BR}). We assume from now on that $\mathcal{W}$ 
is a Hecke character of $K$ of the form described above, hence $\mathcal{W} = \rho \chi \circ {\bf{N}}$
with $\rho$ a primitive ring class character of conductor $\mathfrak{p}^{\alpha}$ for some integer $\alpha \geq 0$, 
and $\chi$ some primitive even Dirichlet character of conductor $p^{\beta}$ for some integer $\beta \geq 0$.
Note that $\mathcal{W}$ is then always a wide ray class character in our setup, and the archimedean local component 
$L_{\infty}(s) = L(s, \pi_{\infty} \times \pi(\mathcal{W})_{\infty})$ does not depend on the choice of such a character $\mathcal{W}$.
That is, the archimedean local component $L(s, \pi_{\infty} \times \pi(\mathcal{W}))$ of the completed Rankin-Selberg $L$-function
\begin{align*} \Lambda(s, \pi \times \mathcal{W}) = \Lambda(s, \pi \times \pi(\mathcal{W})) 
= \Lambda(s, \pi_{\infty} \times \pi(\mathcal{W})_{\infty}) L(s, \pi \times \pi(\mathcal{W})) =: L_{\infty}(s) L(s, \pi \times \mathcal{W}) \end{align*}
does not change as we vary over all Hecke characters $\mathcal{W} = \rho \chi \circ {\bf{N}}$ of $K$ 
with $\rho$ a primitive ring class character and $\chi$ a primitive even Dirichlet character. Note that we shall always
identify such a wide ray class character $\mathcal{W}$ with its corresponding idele class character of $K$.

Let us now assume that the conductor $\operatorname{lcm}(\mathfrak{p}^{\alpha}, p^{\beta} \mathcal{O}_F) \subset \mathcal{O}_F$ 
of $\mathcal{W} = \rho \chi \circ {\bf{N}}$ (viewed as an ideal of $\mathcal{O}_F$) is coprime to the conductor $c(\pi) \subset \mathcal{O}_F$ 
and the relative discriminant $\mathfrak{D} = \mathfrak{D}_{K/F} \subset \mathcal{O}_F$, 
and that $c(\pi)$ and $\mathfrak{D}$ are coprime. We have by the Rankin-Selberg 
theory (cf.~\cite[$\S 2$]{LRS}) the generic root number formula
\begin{align*} \epsilon(1/2, \pi \times \mathcal{W}) &= \omega(c(\mathcal{W})) 
\cdot \mathcal{W} \vert_{ {\bf{A}}_F^{\times}}(c(\pi)) \cdot \epsilon(1/2, \pi) 
\cdot \epsilon(1/2, \mathcal{W})^4 \in {\bf{S}}^1, \end{align*} 
which in our setting is given more explicitly by 
\begin{align}\label{rn1} \epsilon(1/2, \pi \times \rho \chi \circ {\bf{N}}) 
&= \omega( \operatorname{lcm}(\mathfrak{p}^{\alpha}, p^{\beta} \mathcal{O}_F)) 
\cdot \eta \chi^2 \circ {\bf{N}}(c(\pi)) \cdot \epsilon(1/2, \pi) \cdot \epsilon(1/2, \rho \chi \circ {\bf{N}})^4. \end{align}
Here, $\epsilon(1/2, \pi)$ denotes the root number of the $L$-function $L(s, \pi)$ of $\pi$, which appears 
in the functional equation $\Lambda(s, \pi) = \epsilon(1/2, \pi) \Lambda(1-s, \widetilde{\pi})$ 
of the corresponding completed $L$-function $\Lambda(s, \pi)$ of $L(s, \pi)$. 
As well, $\epsilon(1/2, \mathcal{W}) = \epsilon(1/2, \rho \chi \circ {\bf{N}})$ 
denotes the root number of the Hecke $L$-function $L(s, \mathcal{W}) = L(s, \rho \chi \circ {\bf{N}})$. 
Now, it can be deduced from the classically-known properties of the corresponding theta series 
$\theta(\mathcal{W}) = \theta(\rho \chi \circ {\bf{N}})$ 
(a Hilbert modular form of parallel weight one, 
level $\mathfrak{D} \cdot \operatorname{lcm}(\mathfrak{p}^{\alpha}, p^{\beta} \mathcal{O}_F) \subset \mathcal{O}_F$, 
and character $\eta \chi^2 \circ {\bf{N}}$) that this latter root number is given by 
\begin{align}\label{rn0} \epsilon(1/2, \rho \chi \circ {\bf{N}}) &= \eta \chi^2 \circ {\bf{N}} (\mathfrak{d}) \cdot 
\frac{\tau(\eta \chi^2 \circ {\bf{N}})}{ {\bf{N}}(\mathfrak{D} p^{\beta} \mathcal{O}_F)^{\frac{1}{2}} } 
= \eta( \mathfrak{d} ) \cdot \chi^2( {\bf{N}} \mathfrak{d} ) 
\cdot \frac{ \tau ( \eta \chi^2 \circ {\bf{N}} ) }{ {\bf{N}}(\mathfrak{D} p^{\beta} \mathcal{O}_F)^{\frac{1}{2}} }, \end{align}
where $\mathfrak{d} = \mathfrak{d}_F \subset \mathcal{O}_F$ denotes the different of $F$. 
Here, the Gauss sum $\tau(\eta \chi^2 \circ {\bf{N}})$ is defined in this generality as follows (see e.g.~\cite[(65)]{Ro3}). 
Let $e$ denote the function $e(x)= \exp(2 \pi i x)$. Given a primitive Hecke character $\xi$ of $F$ of some conductor 
$c(\xi) = \mathfrak{q} \subset \mathcal{O}_F$, the Gauss sum $\tau(\xi)$ of $\xi$ is defined by 
\begin{align*} \tau(\xi) &= \sum_{x \bmod {\bf{N}} \mathfrak{q} } \xi(x \mathcal{O}_F) e \left( \frac{x^*}{ {\bf{N}} \mathfrak{q} }\right),\end{align*} 
where $x$ runs over invertible classes modulo ${\bf{N}} \mathfrak{q}$, and $x^*$ is determined uniquely as follows: If
\begin{align*} x \equiv \prod_{v \mid {\bf{N}} \mathfrak{q}} x_v \bmod {\bf{N}} \mathfrak{q} \end{align*} 
with $x_v \equiv 1 \bmod {\bf{N}} \mathfrak{q} /v$ for each prime divisor $v$ of ${\bf{N}} \mathfrak{q}$, then 
$x^* \equiv \sum_{v \mid {\bf{N}} \mathfrak{q}} x_v {\bf{N}} \mathfrak{q} / v \bmod {\bf{N}} \mathfrak{q}$.
More generally, the root number $\epsilon(1/2, \xi)$ of the corresponding Hecke $L$-function $L(s, \xi)$ is then given by the formula 
\begin{align*} \epsilon(1/2, \xi) &= {\bf{N}} \mathfrak{q}^{-\frac{1}{2}} \xi(\mathfrak{d}) \tau(\xi) =
{\bf{N}} \mathfrak{q}^{-\frac{1}{2}} \xi(\mathfrak{d}) \sum_{x \bmod {\bf{N}} \mathfrak{q}} \xi(x \mathcal{O}_F) 
e \left(\frac{x^*}{  {\bf{N}}\mathfrak{q} }\right). \end{align*} 
Now, we can give the following more explicit description of the root numbers $(\ref{rn1})$ we consider in this work. 

\begin{proposition}\label{RNF}

Let $\pi$ be a cuspidal automorphic representation of $\operatorname{GL}_2({\bf{A}}_F)$ of level $c(\pi) \subset \mathcal{O}_F$, 
central character $\omega = \omega_{\pi}$, and root number $\epsilon(1/2, \pi)$. 
Let $K$ be a totally imaginary quadratic extension of $F$ of relative discriminant 
$\mathfrak{D}  = \mathfrak{D}_{K/F} \subset \mathcal{O}_F$ and associated idele class character $\eta = \eta_{K/F}$ of $F$. 
Fix a prime ideal $\mathfrak{p} \subset \mathcal{O}_F$ with underlying rational prime $p$.
Assume that $(c(\pi), \mathfrak{D}\mathfrak{p}) = (\mathfrak{p}, \mathfrak{D}) = 1$. 
Let $\mathcal{W} = \rho \chi \circ {\bf{N}}$ be a wide ray class 
Hecke character of $K$ as described above, with $\rho$ a primitive ring class character of conductor $\mathfrak{p}^{\alpha}$
for some integer $\alpha \geq 0$, and $\chi$ a primitive even Dirichlet character of conductor $p^{\beta}$ for some integer $\beta \geq 0$.
Then, the root number $\epsilon(1/2, \pi \times \rho \chi \circ {\bf{N}})$ of the corresponding Rankin-Selberg $L$-function 
$L(s, \pi \times \rho \times \chi \circ {\bf{N}}) = L(s, \pi \times \pi(\rho \chi \circ {\bf{N}}))$ is given by 
\begin{align*} \epsilon(1/2, \pi \times \rho \chi \circ {\bf{N}}) 
&= \omega(\operatorname{lcm}(\mathfrak{p}^{\alpha}, p^{\beta} \mathcal{O}_F)) \cdot \eta(p^{4 \beta} \mathfrak{d}c(\pi)) \cdot 
\epsilon(1/2, \pi) \cdot \chi( {\bf{N}}(\mathfrak{d}^2 c(\pi)^2 \mathfrak{D}^8)) 
\cdot \left( \frac{\tau(\chi^2)}{p^{\frac{\beta}{2}}} \right)^{4d}, \end{align*}
where 
\begin{align*} \tau(\chi^2) &= \sum_{ x \bmod p^{\beta}} \chi^2(x) e \left( \frac{x}{p^{\beta}} \right) \end{align*}
denotes the standard Gauss sum defined over coprime residue classes $x \bmod p^{\beta}$.
Observe in particular that this formula does not depend on the particular choice of ring character 
$\rho$ (i.e.~does not depend on its values), and for $\alpha \gg 1$ sufficiently large is
in fact completely independent of the choice of the ring class character $\rho$ 
(i.e.~as $\omega(\operatorname{lcm}(\mathfrak{p}^{\alpha}, p^{\beta} \mathcal{O}_F))  = 1$ 
for $\alpha \gg 1$, since the homomorphism $\omega$ takes values in roots of unity). 

\end{proposition}

\begin{proof}

Let us first consider the Gauss sum in the formula $(\ref{rn0})$. Using the twisted multiplicativity relation (see e.g.~\cite[(3.16)]{IK}),
we can decompose this as 
\begin{align}\label{tm} \tau(\eta \chi^2 \circ {\bf{N}}) 
&= \eta(p^{\beta} \mathcal{O}_F) \cdot \chi^2( {\bf{N}}\mathfrak{D}) \cdot \tau(\eta) \cdot \tau(\chi^2 \circ {\bf{N}}). \end{align}
On the other hand, we can unravel definitions to find the simplification of the Gauss sum
\begin{align*} \tau(\chi^2 \circ {\bf{N}}) 
&= \sum_{ x \bmod {\bf{N}} (p^{\beta} \mathcal{O}_F) } \chi^2 ( {\bf{N}}(x \mathcal{O}_F)) e \left( \frac{x^*}{ {\bf{N}} (p^{\beta} \mathcal{O}_F) } \right)
= \sum_{x \bmod p^{\beta}} \chi^2( {\bf{N}}(x \mathcal{O}_F)) \prod_{v \mid p^{d \beta}} e \left( \frac{ x_vp^{d\beta}/v }{p^{d \beta}} \right) \\
&= \sum_{x \bmod p^{\beta}} \prod_{\iota: F \hookrightarrow {\bf{C}} } \chi^2(x) e \left(\frac{x}{p^{\beta}} \right) = \tau(\chi^2)^d. \end{align*}
Note that this simplification can also be deduced more directly via the classical definition of the Gauss sum (see e.g.~\cite[(6.3)]{Ne}), 
which recall for any choice of representative $y \in p^{\beta} \mathfrak{d}^{-1} \subset \mathcal{O}_F$ takes the form 
\begin{align*} \sum_{x \bmod p^{\beta}} \chi^2({\bf{N}}(x \mathcal{O}_F)) e(\operatorname{Tr}(xy)) 
= \sum_{x \bmod p^{\beta}} \prod_{\iota: F \hookrightarrow {\bf{C}}} \chi^2(\iota(x \mathcal{O}_F)) e(\iota(x y)) = 
\chi^2({\bf{N}} \mathfrak{d}) \prod_{\iota: F \hookrightarrow {\bf{C}}} \sum_{x \bmod p^{\beta}} \chi^2(x)e \left( \frac{x}{p^{\beta}} \right). \end{align*}
Using this simplification $\tau(\chi^2 \circ {\bf{N}}) = \tau(\chi^2)^d$ in $(\ref{tm})$ then gives us the relation
\begin{align*} \tau(\eta \chi^2 \circ {\bf{N}}) 
&= \eta(p^{\beta} \mathcal{O}_F) \cdot \chi^2( {\bf{N}} \mathfrak{D}) \cdot \tau(\eta) \cdot \tau(\chi^2)^{d}, \end{align*}
from which it follows that 
\begin{align*}
\left( \frac{ \tau (\eta \chi^2 \circ {\bf{N}} ) }{ {\bf{N}} ( \mathfrak{D} p^{\beta} \mathcal{O}_F)^{\frac{1}{2}} } \right)^4 
&= \eta(p^{4 \beta} \mathcal{O}_F) \cdot \chi({\bf{N}} \mathfrak{D}^8) \cdot 
\left( \frac{\tau(\eta)}{ {\bf{N}} \mathfrak{D}^{\frac{1}{2}} } \right)^4 \cdot \left( \frac{\tau(\chi^2)}{p^{\frac{\beta}{2}}} \right)^{4d}. \end{align*}
Hence, we see that $(\ref{rn0})$ takes the more explicit form 
\begin{align*}\epsilon(1/2, \rho \chi \circ {\bf{N}}) &= \eta(p^{4 \beta} \mathfrak{d}) \cdot 
\chi( {\bf{N}} ( \mathfrak{d}^2 \mathfrak{D}^8) ) \cdot \left( \frac{\tau(\eta)}{ {\bf{N}} \mathfrak{D}^{\frac{1}{2}} } \right)^4 
\cdot \left( \frac{\tau(\chi^2)}{p^{\frac{\beta}{2}}} \right)^{4d}. \end{align*}
Since the quadratic root number $\epsilon(1/2, \eta)$ satisfies the relation $\epsilon(1/2, \eta)^4 = 1$, we deduce that 
\begin{align*} \epsilon(1/2, \eta)^4 & = \eta(\mathfrak{d}^4) \left( \frac{\tau(\omega)}{{\bf{N}}\mathfrak{D}^{\frac{1}{2}} } \right)^4 
=  \left( \frac{\tau(\omega)}{{\bf{N}}\mathfrak{D}^{\frac{1}{2}} } \right)^4 =1, \end{align*}
and so we obtain the even simpler explicit formula
\begin{align*}\epsilon(1/2, \rho \chi \circ {\bf{N}}) &= \eta(p^{4 \beta} \mathfrak{d}) \cdot 
\chi( {\bf{N}} ( \mathfrak{d}^2 \mathfrak{D}^8) ) \cdot \left( \frac{\tau(\chi^2)}{p^{\frac{\beta}{2}}} \right)^{4d}. \end{align*}
We then substitute this expression into $(\ref{rn1})$ to derive the stated formula for the root number. \end{proof}

\subsubsection{Dirichlet series expansions} 

Let us retain the setup described above, fixing a prime ideal $\mathfrak{p} \subset \mathcal{O}_F$ with underlying rational prime $p$,
and taking $\mathcal{W} = \rho \chi \circ {\bf{N}}$ to be a wide ray class character of $K$ given by the product of a ring class character 
$\rho$ of $K$ of conductor $\mathfrak{p}^{\alpha}$ for some integer $\alpha \geq 0$ 
times a primitive even Dirichlet character $\chi \bmod p^{\beta}$ for some integer $\beta \geq 0$
composed with the norm homomorphism ${\bf{N}}$ on ideals of $K$. 
Let us also write the Dirichlet series expansion of the finite part $L(s, \pi)$
of the $L$-function $\Lambda(s, \pi) = L(s, \pi_{\infty}) L(s, \pi)$ of $\pi$ (first for $\Re(s) > 1$) as 
\begin{align*} L(s, \pi) &= \sum_{\mathfrak{n} \neq \lbrace 0 \rbrace \subset \mathcal{O}_F} \frac{\lambda(\mathfrak{n})}{ {\bf{N}}\mathfrak{n}^s } 
= \sum_{\mathfrak{n} \neq \lbrace 0 \rbrace \subset \mathcal{O}_F} \frac{ \lambda_{\pi}(\mathfrak{n})}{ {\bf{N}}\mathfrak{n}^s }. \end{align*}
We can then write the Dirichlet series expansion of $L(s, \pi \times \mathcal{W})$ (first for $\Re(s) >1$) 
as a $\operatorname{GL}_2({\bf{A}}_F) \times \operatorname{GL}_2({\bf{A}}_F)$ Rankin-Selberg $L$-function over ideals of $\mathcal{O}_F$  by 
\begin{align}\label{DSEF} L(s, \pi \times \rho \chi \circ {\bf{N}}) 
&= \sum_{\mathfrak{m} \neq \lbrace 0 \rbrace \subset \mathcal{O}_F } 
\frac{ \omega \eta (\mathfrak{m}) \chi^2({\bf{N}} \mathfrak{m})}{ {\bf{N}} \mathfrak{m}^{2s}} 
\sum_{\mathfrak{n} \neq \lbrace 0 \rbrace \subset \mathcal{O}_F } 
\frac{ \lambda(\mathfrak{n}) \chi( {\bf{N}} \mathfrak{n})}{ {\bf{N}} \mathfrak{n}^s }  
\left( \sum_{A \in C (\alpha)} r_A(\mathfrak{n}) \rho(A)\right). \end{align} 
Here, we write $C(\alpha)$ to denote the class group of the order 
$\mathcal{O}_{\mathfrak{p}^{\alpha}} := \mathcal{O}_F + \mathfrak{p}^{\alpha} \mathcal{O}_K$, 
and $r_A(\mathfrak{n})$ the number of ideals in the class $A$ whose image under the relative norm homomorphism ${\bf{N}}_{K/F}$ 
equals a given $\mathfrak{n} \subset \mathcal{O}_F$. As well, each of the sums runs over ideals of $\mathcal{O}_F$ which are coprime
to the conductor of the characters which appear, although we omit this natural condition from the notations for simplicity. 

\subsubsection{Partial symmetric square $L$-values and related Dirichlet series} 

Let us also introduce the following Dirichlet series which we encounter in our later calculations, and which is related up to a convergent
product of Euler factors to the following partial symmetric square $L$-function of $\pi$ at $s=1$. We refer to \cite[$\S 1.1$]{CM} and more 
generally \cite{Sha} for some background discussion of the analytic properties of such $L$-functions. Fix $\pi = \otimes_v \pi_v$ a cuspidal 
automorphic representation of $\operatorname{GL}_2({\bf{A}}_F)$ as above, writing $c(\pi) \subset \mathcal{O}_F$ again to denote its conductor. 
Let $\xi = \otimes_v \xi_v = \chi \circ {\bf{N}}$ be a wide ray class Hecke character of $F$ induced via composition with the norm homomorphism 
from a primitive even Dirichlet character $\chi$ of conductor $c(\chi)$. Let $S$ denote the set of places of $F$ where both $\pi$ and $\xi$ are ramified. 
We consider the incomplete $L$-function of the symmetric square of $\pi$ times $\xi$, defined for $s \in {\bf{C}}$ (first with $\Re(s)>1$) by the Euler product 
\begin{align*} L^S(s, \operatorname{Sym}^2 \pi \otimes \xi) &= \prod_{v \notin S} L(s, \operatorname{Sym}^2 \pi_v \otimes \xi_v), \end{align*}
where the local Euler factors $L(s, \operatorname{Sym}^2 \pi_v \otimes \xi_v)$ are defined as follows. If $v$ is a place where $\pi_v$
is unramified, then there exist unramified quasicharacters $\mu_{1,v}$ and $\mu_{2,v}$ of $\operatorname{GL}_2(F_v)$ such that
$\pi_v$ arises from the induced representation of the character $\mu_v = \mu_{1,v} \otimes \mu_{2,v}$ of the torus
$T_v \subset \operatorname{GL}_2(F_v)$ of diagonal matrices. Fixing a uniformizer $\varpi_v$ of $\mathcal{O}_{F_v}$, 
and writing $A_v$ to denote the diagonal matrix 
$\operatorname{diag}(\mu_{1,v}, \mu_{2,v})$, we then have 
\begin{align*} L(s, \operatorname{Sym}^2 \pi_v \otimes \xi) &= \det \left( I - \operatorname{Sym}^2(A_v) \xi_v(\varpi_v) {\bf{N}} v^{-s} \right)^{-1} 
= \prod_{1 \leq i \leq j \leq 2} \left( 1 - \mu_{i, v} \mu_{j, v} \xi_v(\varpi_v) {\bf{N}} v^{-s} \right)^{-1}. \end{align*} 
It is well-known that $L^S(s, \operatorname{Sym}^2 \pi \otimes \xi)$ does not vanish at $\Re(s) = 1$ (see \cite[Theorem 1.1]{Sha}). 
The work of Goldfeld-Hoffstein-Lieman \cite{GHL} on exceptional zeros in fact gives a lower bound for such $L$-values in the classical setting. 
In general, as explained in \cite[$\S 1.1$]{CM}, one can derive individual upper and lower bounds for $L(s, \operatorname{Sym}^2 \pi \otimes \xi)$ 
via the automorphy of $\operatorname{Sym}^2 \pi$ (known thanks to Gelbart-Jacquet \cite{GJ}). 
In particular, the automorphy can be used to deduce individual upper bounds 
$L(1, \operatorname{Sym}^2 \pi \otimes \xi) \ll_{\varepsilon} {\bf{N}} (c(\pi) c(\chi)^2)^{\varepsilon}$, as well as individual lower bounds  
\begin{align}\label{lower} L^S(1, \operatorname{Sym}^2 \pi \otimes \xi) \gg \left( \log ( {\bf{N}} (c(\pi)c(\xi)^2)) \right)^{-C} \end{align}
for some constant $C>0$. Note that the local Euler factors $L(s, \operatorname{Sym}^2 \pi_v \otimes \xi_v)$ at primes $v \mid c(\pi)$ 
can be defined in a more complicated way. We shall omit the superscript $S$ in the discussion below. 

Finally, we note that $L^S(s, \operatorname{Sym}^2 \pi \otimes \xi)$ has Dirichlet series expansion (first for $\Re(s) >1$) 
given up to some convergent finite product of Euler factors by the simplified Dirichlet series defined by   
\begin{align*} L^{S, \star}(s, \operatorname{Sym}^2 \pi \otimes \xi) &:= L^S(2s, \omega \xi^2 )
\sum_{\mathfrak{n} \neq \lbrace 0 \rbrace \subset \mathcal{O}_F \atop (\mathfrak{n}, S)=1} 
\frac{\lambda_{\pi \otimes \xi}(\mathfrak{n}^2)}{{\bf{N}} \mathfrak{n}^s}
= \sum_{\mathfrak{m} \subset \mathcal{O}_F \atop (\mathfrak{m}, S)=1} \frac{\omega \xi^2 (\mathfrak{m})}{{\bf{N}} \mathfrak{m}^{2s}}
\sum_{\mathfrak{n} \neq \lbrace 0 \rbrace \subset \mathcal{O}_F \atop (\mathfrak{n}, S)=1} 
\frac{\lambda(\mathfrak{n}^2) \xi(\mathfrak{n})}{{\bf{N}} \mathfrak{n}^s}. \end{align*} 
We shall often encounter the following corresponding partial Dirichlet series expansion over principal ideals 
$\mathfrak{n} = (a) \subset \mathcal{O}_F$, which we denote throughout (dropping the superscript $S$ for simplicity) by 
\begin{align*} L_{\bf{1}}^{S, \star}(s, \operatorname{Sym}^2 \pi \otimes \xi) &:= L^S_{\bf{1}}(2s, \omega \xi^2 ) 
\sum_{a \neq 0 \in \mathcal{O}_F / \mathcal{O}_F^{\times} \atop (a, S)=1} \frac{ \lambda_{\pi \otimes \xi}(a^2)}{{\bf{N}} a^s}
= \sum_{ b \neq 0 \in \mathcal{O}_F / \mathcal{O}_F^{\times} \atop (b, S)=1} \frac{\omega \xi^2 (b)}{ {\bf{N}} b^{2s}} 
\sum_{a \neq 0 \in \mathcal{O}_F / \mathcal{O}_F^{\times} \atop (a, S)=1} \frac{ \lambda(a^2)\xi(a)}{ {\bf{N}} a^s}. \end{align*}
Here, we write $b = (b)$ and $a = (a)$ to lighten notations, and the sums are really taken over nonzero principal ideals 
$(a), (b) \neq 0 \subset \mathcal{O}_F$. That is, we write the sums here as the corresponding sums over $F$-integers $a, b$
modulo the action of units as $a, b \in \mathcal{O}_F / \mathcal{O}_F^{\times}$. 
We ask the reader to please excuse this shorthand notation, as it simplifies notations greatly for our later calculations. 
Also, although it does not generally admit an Euler product (unless $F$ has class number one), 
this partial Dirichlet series has the same basic analytic properties 
as the sum over classes $L^S(s, \operatorname{Sym}^2 \pi \otimes \xi)$, from which we deduce that it does not vanish at $s=1$.

\subsection{Approximate functional equations}  

Let $\mathcal{W} = \rho \chi \circ {\bf{N}}$ be a wide ray class character of $K$, as above. 
Recall that for $\Re(s) > 1$, we write the Dirichlet series expansion $(\ref{DSEF})$ of the 
finite part $L(s, \pi \times \mathcal{W})$ of the Rankin-Selberg $L$-function 
$\Lambda(s, \pi \times \mathcal{W}) = L_{\infty}(s) L(s, \pi \times \mathcal{W})$ as  
\begin{align*} L(s, \pi \times \mathcal{W}) &= \sum_{\mathfrak{m} \neq \lbrace 0 \rbrace \subset \mathcal{O}_F} 
\frac{ \omega \eta (\mathfrak{m}) \chi^2( {\bf{N}} \mathfrak{m})}{ {\bf{N}} \mathfrak{m}^{2s}} 
\sum_{\mathfrak{n} \neq \lbrace 0 \rbrace \subset \mathcal{O}_F} 
\frac{ \lambda(\mathfrak{n}) \chi( {\bf{N}} \mathfrak{n}) }{ {\bf{N}} \mathfrak{n}^s } \left( \sum_{A \in C (\alpha)} r_A(\mathfrak{n}) \rho(A)\right). \end{align*} 
Recall as well that the completed $L$-function $\Lambda(s, \pi \times \mathcal{W})$ satisfies the functional equation 
\begin{align*} \Lambda(s, \pi \times \mathcal{W}) 
&= \epsilon(1/2, \pi \times \mathcal{W}) \cdot c(\pi \times \mathcal{W})^{\frac{1}{2} - s} \cdot \Lambda(1-s, \widetilde{\pi} \times \overline{\mathcal{W}}), \end{align*}
where $\epsilon(1/2, \pi \times \mathcal{W}) \in {\bf{S}}^1$ as described in $(\ref{RNF})$ above denotes the root number, 
and $c(\pi \times \mathcal{W})$ as described in $(\ref{conductor})$ above denotes the conductor of the $L$-function.

We now derive a suitable presentation for the finite part $L(s, \pi \times \mathcal{W})$ of this $L$-function at $s=1/2$, 
outside of the range of absolute convergence $\Re(s) >1$ via the following standard contour argument. 
We present the details for the convenience of the reader. Let us fix a holomorphic test function $k$ on ${\bf{C}}$ such that $k(s)$ 
is even and bounded in vertical strips. To be more precise, let $g \in \mathcal{C}_c^{\infty}({\bf{R}}_{>0})$ be any smooth and compactly 
supported test function, and let $k(s) = \int_0^{\infty} g(x) x^s \frac{dx}{x}$ denote its Mellin transform. 
We can and do assume that $g$ is chosen so that $k(0)=1$. Now, recall that the generalized Ramanujan conjecture for $\pi$ at the 
real places of $F$ predicts that $\max_j (\mu_{\infty}(j)) = 0$. If $\pi$ arises from a holomorphic Hilbert modular form, then this 
conjecture is known by work of Blasius \cite{Bl} (generalizing Deligne's theorem \cite{De}). 
In general, the conjecture is not yet known, and in this level of generality we can and do assume that $g$ is chosen so that $k(\mu_{\infty}(j)) = 0$ 
for each $1 \leq j \leq 2$. This allows us to avoid having to consider addition residual coming from poles inside the critical strip $0 < \Re(s) < 1$ in 
our subsequent arguments. Fixing such a test function $k(s)$ once and for all, 
we then define the following smooth cutoff functions $V_1(y)$ and $V_2(y)$ on $y \in {\bf{R}}_{>0}$ by 
\begin{align*} V_1(y) &= \int_{\Re(s) = 2} \frac{k(s)}{s} y^{-s} \frac{ds}{2 \pi i} \\
V_2(y) &= \int_{\Re(s) = 2} \frac{k(-s)}{s} \frac{\widetilde{L}_{\infty}(s + 1/2)}{L_{\infty}(-s + 1/2)} y^{-s} \frac{ds}{2 \pi i}. \end{align*} 
Here, we write $\widetilde{L}_{\infty}(s)$ to denote the archimedean component of the contragredient $L$-function 
\begin{align*} \Lambda(s, \widetilde{\pi} \times \overline{\mathcal{W}}) 
= \widetilde{L}_{\infty}(s) L(s, \widetilde{\pi} \times \overline{\mathcal{W}}), \end{align*}
which again does not depend on the choice of wide ray class character $\overline{\mathcal{W}}$ of $K$.

\begin{lemma}\label{AFE} 

Let $\mathcal{W} = \rho \chi \circ {\bf{N}}$ be any wide ray class Hecke character of $K$ of the form described above, 
with $\rho$ a ring class character of $K$ conductor $\mathfrak{p}^{\alpha} \subset \mathcal{O}_F$, 
and $\chi \bmod p^{\beta}$ a primitive even Dirichlet character, 
and assume that $(c(\pi), \mathfrak{D}\mathfrak{p}) = (\mathfrak{p}, \mathfrak{D}) = 1$. 
Let $c(\pi \times \rho \chi \circ {\bf{N}}) = {\bf{N}} (\mathfrak{D}^2 c(\pi_K) p^{4d \max (\alpha, \beta)})$ denote 
the conductor of the $L$-function $L(s, \pi \times \mathcal{W}) = L(s, \pi \times \rho \chi \circ {\bf{N}})$. 
We have for any choice of real parameter $Z >0$ the formula
\begin{align*} &L(1/2, \pi \times \mathcal{W}) = \sum_{\mathfrak{m}  \subset \mathcal{O}_F \atop \mathfrak{m} \neq \lbrace 0 \rbrace} 
\frac{\omega \eta(\mathfrak{m}) \chi^2( {\bf{N}} \mathfrak{m}) }{ {\bf{N}} \mathfrak{m}} 
\sum_{\mathfrak{n} \subset \mathcal{O}_F \atop \mathfrak{n} \neq \lbrace 0 \rbrace} 
\frac{\lambda(\mathfrak{n}) \chi( {\bf{N}} \mathfrak{n}) }{ {\bf{N}} \mathfrak{n}^{\frac{1}{2}}} 
\left( \sum_{A \in C (\alpha)} r_A(\mathfrak{n}) \rho(A)\right)  V_1 \left( {\bf{N}} \mathfrak{m}^2 {\bf{N}} \mathfrak{n} Z \right) \\
&+ \epsilon(1/2, \pi \times \rho \chi \circ {\bf{N}}) 
\sum_{\mathfrak{m} \subset \mathcal{O}_F \atop \mathfrak{m} \neq \lbrace 0 \rbrace} 
\frac{ \overline{\omega} \eta(\mathfrak{m}) \overline{\chi}^2( {\bf{N}} \mathfrak{m}) }{ {\bf{N}}\mathfrak{m}} 
\sum_{\mathfrak{n} \subset \mathcal{O}_F \atop \mathfrak{n} \neq \lbrace 0 \rbrace} 
\frac{ \overline{\lambda(\mathfrak{n}) \chi({\bf{N}} \mathfrak{n})}}{ {\bf{N}} \mathfrak{n}^{\frac{1}{2}}} 
\left( \sum_{A \in C (\alpha)} r_A(\mathfrak{n}) \rho(A)\right)  
V_2 \left( \frac{ {\bf{N}} \mathfrak{m}^2 {\bf{N}} \mathfrak{n} }{ Z c(\pi \times \rho \chi \circ {\bf{N}} )} \right). \end{align*} \end{lemma}

\begin{proof}  The proof is standard (see e.g.~\cite[Lemma 3.2]{LRS} or \cite[$\S 5.2$]{IK}). Note that the ring class character
$\rho$ in his expression is not inverted as a consequence of the fact that such characters are equivariant with 
respect to complex conjugation (cf.~\cite[$\S 1$]{Ro2}), as mentioned already in our discussion of the functional equation $(\ref{symmetricFE})$. \end{proof}

\begin{lemma}\label{cutoff} 

The smooth cutoff functions $V_j(y)$ defined above satisfy the following decay properties: 
\begin{align*} V_1(y) &= \begin{cases} 1 + O_A(y^A) &\text{for any choice of $A \geq 1$ if $0 < y \leq 1$} \\ 
O_{C}(y^{-C}) &\text{for any constant $C > 0$ if $y \geq 1$}. \end{cases} \end{align*} 
and 
\begin{align*} V_2(y) &= \begin{cases} \frac{\widetilde{L}_{\infty}(1/2)}{L_{\infty}(1/2)} 
+ O_{\varepsilon}(y^{\frac{1}{2} - \varepsilon}) &\text{for any small $\varepsilon > 0$ if $0 < y \leq 1$} \\ 
O_{C}(y^{-C}) &\text{for any constant $C > 0$ if $y \geq 1$}. \end{cases} \end{align*} 
\end{lemma}

\begin{proof} This is also standard; see \cite[Lemma 3.1]{LRS} or \cite[Proposition 5.4]{IK}). One shifts the contour defining $V_j(y)$ 
to the right to obtain the behaviour as $y \rightarrow \infty$, and to the left to obtain the behaviour as $y \rightarrow 0$. \end{proof}

\subsection{Shifted convolution sum estimates}  

We have the following estimates for the shifted convolution problem for the $L$-function coefficients of $\operatorname{GL}_2({\bf{A}}_F)$-automorphic forms. 
Although the proof is well-known to experts (even if the exact statement we give does not appear in the literature), 
we provide one for the convenience of the reader in Appendix A below, 
especially as we shall develop several of the ideas of this proof with spectral expansions for our subsequent estimates. 

\begin{theorem}\label{SCS} 

Let $\pi = \otimes \pi_v$ be any non-dihedral cuspidal $\operatorname{GL}_2({\bf{A}}_F)$-automorphic representation. 
Let $W$ be any smooth function of compact support on ${\bf{R}}_{>0}$ 
(or any smooth function which decays rapidly near infinity and moderately near zero), 
whose derivatives satisfy the decay condition $W^{(i)} \ll 1$ for all integers $i \geq 0$. 
Let $q \neq 0 \in \mathcal{O}_{F}$ be any totally positive $F$-integer. Assume that $\pi$ is not dihedral, in other words
that $\pi$ does not arise via automorphic induction from a Hecke character of some quadratic extension of $F$. 
For any choice real number $Y >0$ and any choice of $\varepsilon >0$, we have the uniform upper bound
\begin{align*}\sum_{\gamma \in F^{\times}} \frac{\lambda( \gamma^2 + q )}{ {\bf{N}}(\gamma^2 + q)^{\frac{1}{2}}} 
W \left( \frac{ {\bf{N}} (\gamma^2 + q)}{Y} \right) \ll_{\pi, \varepsilon} Y^{\frac{1}{4}} \cdot {\bf{N}} q^{\delta_0 - \frac{1}{2}} 
\left( \frac{ {\bf{N}} q }{Y} \right)^{\frac{1}{2} - \frac{\theta_0}{2} - \varepsilon}. \end{align*}
Here, the implied constant depends on the choice of weight function $W$.
As well, $0 \leq \theta_0 \leq 1/2$ denotes the best known approximation towards the generalized Ramanujan conjecture for arbitrary 
$\operatorname{GL}_2({\bf{A}}_F)$-automorphic forms, so that $\theta_0 = 7/64$ is admissible by the theorem of Blomer-Brumley \cite{BBRP}. 
On the other hand, $0 \leq \delta_0 \leq 1/4$ denotes the best exponent
in the bound for Fourier coefficients of automorphic forms on the metaplectic cover of $\operatorname{GL}_2({\bf{A}}_F)$, 
which by theorems of Kohnen-Zagier (for $F = {\bf{Q}}$) and Baruch-Mao (for any $F$) is equivalent to the best known approximation 
towards the generalized Lindel\"of hypothesis for $\operatorname{GL}_2({\bf{A}}_F)$-automorphic forms 
in the level aspect. Hence (taking $\theta_0 = 7/64$), $\delta_0 = 103/512$ is admissible by Blomer-Harcos \cite[Corollary 1]{BH10}. \end{theorem}

We also have the following variation, whose proof we also give in Appendix A (see also \cite[Theorem 2]{BH10}):

\begin{theorem}\label{SCS2}

Let $\pi = \otimes_v \pi_{v}$ for be a non-dihedral cuspidal $\operatorname{GL}_2({\bf{A}}_F)$-automorphic representation,
with Hecke eigenvalues or equivalently $L$-function coefficients denoted by $\lambda_{\pi}$.
Fix $W$ a smooth function and compact support 
(or any smooth function of rapid decay near infinity and moderate decay near zero) 
whose derivatives satisfy the decay condition $W^{(i)} \ll 1$ for all integers $i \geq 0$.
Fix an $F$-rational quadratic polynomial $q(x) = r x^2 + s x + t \in \mathcal{O}_F[x]$, and let us assume that 
the discriminant $\Delta := s^2 - 4rt$ is totally negative $\Delta \ll 0$ (hence nonzero).
Fix $Y>0$ a positive real number. 
Then, with notations as in Theorem \ref{SCS}, we have for any $\varepsilon >0$ the uniform upper bound 
\begin{align*} \sum\limits_{\gamma \in F^{\times} } 
\frac{ \lambda_{\pi}(q(\gamma))}{{\bf{N}}(q(\gamma))^{\frac{1}{2}}}
W \left( \frac{ {\bf{N}}q(\gamma) }{Y} \right) \ll_{\pi, \varepsilon} Y^{\frac{1}{4}} \cdot
{\bf{N}}r \cdot {\bf{N}} \Delta^{\delta_0 - \frac{1}{2}}
\left( \frac{ {\bf{N}} \Delta  }{Y} \right)^{\frac{1}{2} - \frac{\theta_0}{2}  - \varepsilon}. \end{align*} \end{theorem}

We shall use these bounds as follows to estimate the averages of $L$-functions we wish to consider.

\subsection{Averages over primitive ring class characters} 

Fix a prime ideal $\mathfrak{p} \subset \mathcal{O}_F$ with underlying rational prime $p$, 
and assume that $(c(\pi), \mathfrak{D} \mathfrak{p}) = (\mathfrak{p}, \mathfrak{D}) = 1$.
Fix a primitive even Dirichlet character $\chi$ of conductor $p^{\beta}$ for some integer $\beta \geq 0$. 
Let us for any integer $\alpha \geq 0$ write $C(\alpha)$ to denote the class group of the 
order $\mathcal{O}_{\mathfrak{p}^{\alpha}} = \mathcal{O}_F + \mathfrak{p}^{\alpha} \mathcal{O}_K$, with $C(\alpha)^{\vee}$ its character group. 
Hence, $C(0)$ denotes the class group of $\mathcal{O}_K$, whose cardinality we denote by $h_K = \#C(0)$. 
Note that by Dedekind's formula (see e.g.~\cite[Theorem 7.24]{Cox}),  
\begin{align*} \#C(\alpha) &= \frac{h_K \cdot {\bf{N}} \mathfrak{p}^{\alpha}}{[\mathcal{O}_K^{\times} : \mathcal{O}_{\mathfrak{p}^{\alpha}}^{\times}]} 
\cdot \left( 1 - \frac{\eta(\mathfrak{p})}{ {\bf{N}}\mathfrak{p}  }\right), 
\quad \eta(\mathfrak{p}) = \eta_{K/F}(\mathfrak{p}) = \begin{cases} ~~1 &\text{if $\mathfrak{p}$ splits in $K$}\\
-1 &\text{if $\mathfrak{p}$ is inert in $K$} \\ ~~ 0 &\text{if $\mathfrak{p}$ ramifies in $K$}, \end{cases} \end{align*} and hence 
\begin{align*} \#C(\alpha) 
&= \begin{cases} \frac{h_K}{[\mathcal{O}_K^{\times}: \mathcal{O}_{\mathfrak{p}^{\alpha}}^{\times}]} \cdot (p^d-1) \cdot p^{d(\alpha-1)} &\text{if $\mathfrak{p}$ splits in $K$} \\
\frac{h_K }{[\mathcal{O}_K^{\times} : \mathcal{O}_{\mathfrak{p}^{\alpha}}^{\times}]} \cdot (p^d+1) \cdot p^{d (\alpha-1)} &\text{if $\mathfrak{p}$ is inert in $K$} \\
\frac{h_K}{[\mathcal{O}_K^{\times} : \mathcal{O}_{\mathfrak{p}^{\alpha}}^{\times}]} \cdot p^{d \alpha} &\text{if $\mathfrak{p}$ ramifies in $K$}. \end{cases} \end{align*}
Moreover, since $(\mathcal{O}_{\mathfrak{p}^{\alpha}}^{\times})_{\alpha \geq 0}$ forms a decreasing sequence of subgroups
of $\mathcal{O}_K^{\times}$ with $\cap_{\alpha \geq 0} \mathcal{O}_{\mathfrak{p}^{\alpha}}^{\times} = \mathcal{O}_F^{\times}$
and $\mathcal{O}_F^{\times}$ has finite index (two) in $\mathcal{O}_K^{\times}$, 
we deduce that $\mathcal{O}_{\mathfrak{p}^{\alpha}} = \mathcal{O}_F^{\times}$ for all $\alpha \gg 0$ sufficiently large (cf.~\cite[Lemma 2.1]{CV}). 
Hence for all sufficiently large $\alpha \gg 0$ in this sense, we have the simpler formulae 
\begin{align}\label{SF} \#C(\alpha) &= \frac{1}{2} \cdot \begin{cases} h_K \cdot (p^d-1) \cdot p^{d (\alpha-1)} &\text{if $\mathfrak{p}$ splits in $K$} \\
h_K \cdot (p^d +1 ) \cdot p^{d (\alpha-1) } &\text{if $\mathfrak{p}$ is inert in $K$} \\
h_K \cdot p^{d \alpha} &\text{if $\mathfrak{p}$ ramifies in $K$}. \end{cases} \end{align}
Recall that for $\alpha \geq 1$, the primitive ring class characters of conductor $\mathfrak{p}^{\alpha}$ 
are those characters of $C(\alpha)^{\vee}$ which do not factor through $C(\alpha-1)^{\vee}$.  
We shall consider the weighted averages over such characters $\rho$ of the corresponding central values $L(1/2, \pi \times \rho \chi \circ {\bf{N}})$,
as well as the following sub-averages. First, in the style of Cornut-Vatsal \cite{CV}, let us consider the profinite limit 
$C(\infty) = \varprojlim_{\alpha \geq 0} C(\alpha)$, writing $C_0 = C(\infty)_{\operatorname{tors}}$ to denote its finite torsion subgroup. 
Given an integer $\alpha \geq 1$ and a character $\rho_0$ of the torsion subgroup $C_0$, we consider the subset $P(\alpha, \rho_0)$ 
of primitive ring class characters of $C(\alpha)$ whose induced character on $C_0$ (determined by the image of $C_0$ on $C(\alpha)$) equals $\rho_0$.  
We shall also consider the weighted sub-averages over primitive ring class characters $\rho$ of exact order $p^x$ of $C(\alpha)$,
where $x = \operatorname{ord}_p(\#C(\alpha))$ is the largest admissible exponent for the ring class characters of conductor $\mathfrak{p}^{\alpha}$ (cf.~\cite[$\S$ 2]{Va}).
Note (see \cite[$\S$3.1]{IK}) that the ring class characters of exponent $p^x$ detect the $p^x$-th powers 
\begin{align*} C(\alpha)^{p^x} &= \left\lbrace A^{p^x}: A \in C(\alpha) \right\rbrace, \end{align*}
in that we have the orthogonality relation 
\begin{align}\label{OO} \sum_{\rho \in C(\alpha)^{\vee} \atop \rho^{p^x}  = {\bf{1}}} \rho(A)
&= \begin{cases} [C(\alpha): C(\alpha)^{p^x}] &\text{if $A \in C(\alpha)^{p^x}$} \\ 0 &\text{otherwise}. \end{cases} \end{align}

\begin{lemma}\label{MI} 

By M\"obius inversion (or simply inclusion-exclusion), we have the following orthogonality relations for 
sums over primitive ring class characters of a given conductor and primitive ring class characters of a given (maximal) $p$-power order.
Let us for any integer $\alpha \geq 1$ write $Z(\alpha) = \ker \left( C(\alpha)  \longrightarrow C(\alpha- 1) \right)$ to denote the kernel 
of the natural surjective morphism $j: C(\alpha) \longrightarrow C(\alpha-1)$, 
with $\#C^{\star}(\alpha) = \#C(\alpha) - \#C(\alpha-1)$ the number of primitive ring class characters of conductor $\mathfrak{p}^{\alpha}$.
Let us also for an integer $0 \leq y \leq \operatorname{ord}_p(\#C(\alpha))$ write $\#C(\alpha, y) = [C(\alpha): C(\alpha)^{p^y}]$ 
to denote the index of the classes in $C(\alpha)$ of a given exponent $p^y$, with 
$\#C^{\star}(\alpha, x) = [C(\alpha): C(\alpha)^{p^x}] - [C(\alpha): C(\alpha)^{p^{x-1}}] = \#C(\alpha, x) - \#C(\alpha, x-1)$ the difference. \\

\begin{itemize}

\item[(i)] The weighted sum over primitive ring class characters in $C(\alpha)^{\vee}$ is given by the orthogonality relation
\begin{align*} \frac{1}{\#C^{\star}(\alpha)}\sum_{\rho \in C(\alpha)^{\vee} \atop \operatorname{primitive}} \rho(A) 
&= \begin{cases} 1 &\text{ if $A = {\bf{1}} \in C(\alpha)$ is the principal class} \\ 
- \frac{\#C(\alpha-1)}{\#C^{\star}(\alpha)} &\text{ if $A\in Z(\alpha)$, $A \neq {\bf{1}} \in C(\alpha)$} \\ 
0 &\text{ otherwise}. \end{cases} \end{align*}
Here, we can identify the sum over classes $A \in Z(\alpha)$ with the principal class in $C(\alpha-1)$.
Moreover, for each sufficiently large $\alpha \gg 1$, we have that $\#C(\alpha-1)/\#C^{\star}(\alpha) = 1/({\bf{N}} \mathfrak{p}  - 1) =1/(p^d-1)$. \\

\item[(ii)] Fix a character $\rho_0$ of $C_0 = C(\infty)_{\operatorname{tors}}$, 
and let us for each integer $\alpha \geq 0$ write $C_0(\alpha)$ to denote the image of $C_0$ in $C(\alpha)$
(so that $C_0 \cong C_0(\alpha)$ for $\alpha$ sufficiently large), with $\overline{C}(\alpha) = C(\alpha)/C_0(\alpha)$. 
The weighed sum over primitive ring class characters in $P(\alpha, \rho_0)$ inducing $\rho_0$ on $C_0$ is given by 
\begin{align*} \frac{1}{\#P(\alpha, \rho_0)} \sum_{\rho \in P(\alpha, \rho_0)} \rho(A) &= 
\rho_0(A) \cdot \begin{cases} 1 & \text{ if $A \in C_0(\alpha)$} \\ - \frac{\#\overline{C}(\alpha-1)}{\# P(\alpha, \rho_0)}
&\text{ if $A \in C_0(\alpha-1) \backslash C_0(\alpha)$} \\ 0 & \text{ otherwise}. \end{cases} \end{align*}
Here, we view $\rho_0$ as a character on the image $C_0(\alpha)$ of $C_0$ in $C(\alpha)$. 
Also, the cardinality $\#P(\alpha, \rho_0)$ is in fact given by the difference 
$\# \overline{C}(\alpha) - \#\overline{C}(\alpha-1)$, and so we can express the formula accordingly. \\

\item[(iii)] The weighted sum over primitive ring class characters in $C(\alpha)^{\vee}$ of exact order 
$p^x$ for $x = \operatorname{ord}_p(\#C(\alpha))$ is given by the orthogonality relation 

\begin{align*} \frac{1}{\# C^{\star}(\alpha, x)} 
\sum\limits_{ {\rho \in C(\alpha)^{\vee} \atop \rho^{p^x}  = {\bf{1}}} \atop \rho^{p^y} \neq {\bf{1}} ~ \forall ~ 0 \leq y < x } \rho(A)
&= \begin{cases} 1 &\text{ if $A \in C(\alpha)^{p^x}$} \\ 
- \frac{\#C(\alpha, {x-1})}{\#C^{\star}(\alpha, x)} &\text{ if $A \in C(\alpha)^{p^{x-1}} \backslash C(\alpha)^{p^x}$} \\
0 &\text{ otherwise}. \end{cases} \end{align*}

\end{itemize}

\end{lemma}

\begin{proof} 

For (i), we first apply M\"obius inversion, then apply standard orthogonality relations
for sums over characters of $C(\alpha)$ and $C(\alpha-1)$ respectively to deduce that 
\begin{align*} \sum\limits_{\rho \in C(\alpha)^{\vee} \atop \operatorname{primitive}} \rho(A) &=
\sum_{\rho \in C(\alpha)^{\vee}} \rho(A) - \sum_{\rho' \in C(\alpha-1)^{\vee}} \rho'(j(A)) \\
&= \begin{cases} \#C(\alpha) &\text{ if $A = {\bf{1}} \in C(\alpha)$} \\ 0 &\text{ otherwise} \end{cases} 
- \begin{cases} \#C(\alpha-1) &\text{ if $j(A) = {\bf{1}} \in C(\alpha-1)$} \\ 0 &\text{ otherwise} \end{cases} \\
&= \begin{cases} \#C(\alpha) - \#C(\alpha-1) &\text{ if $A = {\bf{1}} \in C(\alpha)$} \\
- \#C(\alpha-1) &\text{ if $A \in Z(\alpha) = \ker(j: C(\alpha) \rightarrow C(\alpha-1))$ but $A \neq {\bf{1}} \in C(\alpha)$} \\ 0 &\text{ otherwise} \end{cases}. \end{align*} 
Notice that the classes $A \in C(\alpha)$ in the difference expressions are identified with their images $j(A) \in C(\alpha-1)$,
as the second orthogonality relation applies only in the subquotient $C(\alpha-1)$.
The stated formula (i) then follows after dividing out by the factor $\#C^{\star}(\alpha)$. 

For (ii), we first take for granted the argument of \cite[Lemma 2.8]{CV}, which shows that 
(1) the natural surjective map $C_0  \rightarrow C_0(\alpha)$ is an isomorphism for $\alpha$ sufficiently large,
(2) the surjective map $C(\alpha) \rightarrow \overline{C}(\alpha)$ induces an isomorphism from $Z(\alpha) = \ker(C(\alpha) \rightarrow C(\alpha-1))$
to $\ker(\overline{C}(\alpha) \rightarrow \overline{C}(\alpha-1))$, and (3) the kernel $X(\alpha) = \ker(C(\alpha) \rightarrow \overline{C}(\alpha-1))$
has the direct sum decomposition $X(\alpha) \cong C_0(\alpha) \oplus Z(\alpha)$. We then deduce as in \cite[Lemma 2.8]{CV} that the subset of primitive 
characters $P(\alpha, \rho_0)$ has the following more explicit description: There exists a character $\rho_0'$ on $C_0(\alpha)$ 
inducing $\rho_0$ on $C_0$ and the trivial character ${\bf{1}}$ on $Z(\alpha)$ such that 
\begin{align*} P(\alpha, \rho_0) &= \rho_0' \cdot \left( \overline{C}(\alpha)^{\vee} - \overline{C}(\alpha-1)^{\vee} \right). \end{align*}
We then deduce stated formula from M\"obius inversion, as in (i). That is, we find that 
\begin{align*} \frac{1}{\#P(\alpha, \rho_0)} \sum_{\rho \in P(\alpha, \rho_0)} \rho(A) 
&= \frac{\rho_0'(A)}{(\# \overline{C}(\alpha) - \# \overline{C}(\alpha-1))} 
\cdot \left( \sum_{\overline{\rho} \in \overline{C}(\alpha)^{\vee}} \overline{\rho}(A) 
- \sum_{\overline{\rho}' \in \overline{C}(\alpha-1)^{\vee}} \overline{\rho}'(A)  \right),\end{align*}
which reduces us to the same argument as given for (i).

For (iii), we apply M\"obius inversion in a distinct way to detect the characters $\rho$ of exact order $p^x$ as 
\begin{align*} \sum\limits_{ { \rho \in C(\alpha)^{\vee} \operatorname{primitive} \atop \rho^{p^x} = {\bf{1}} } \atop \rho^{p^y} \neq {\bf{1}} \forall 0 \leq y \leq x- 1 } \rho(A)
&= \sum\limits_{\rho \in C(\alpha)^{\vee} \atop \rho^{p^x} = {\bf{1}}} \rho(A) - \sum\limits_{\rho' \in C(\alpha)^{\vee} \atop  (\rho')^{p^{x-1}} = {\bf{1}}   } \rho'(A).\end{align*}
That is, we argue that the ring class characters of exact order $p^x$ will necessarily be primitive, 
as they cannot factor through $C(\alpha-1)$ by definition of the exponent $x$. 
We then evaluate the difference of sums via the orthogonality relation $(\ref{OO})$ as 
\begin{align*} &\sum\limits_{\rho \in C(\alpha)^{\vee} \atop \rho^{p^x} = {\bf{1}}} \rho(A) - \sum\limits_{\rho'' \in C(\alpha)^{\vee} \atop  (\rho'')^{p^{x-1}} = {\bf{1}}   } \rho''(A) \\
&= \begin{cases} [C(\alpha):C(\alpha)^{p^x}] &\text{ if $A \in C(\alpha)^{p^x}$} \\ 0 &\text{ otherwise} \end{cases}
- \begin{cases} [C(\alpha): C(\alpha)^{p^{x-1}}] &\text{ if $A \in C(\alpha)^{p^{x-1}}$ } \\ 0 &\text{ otherwise} \end{cases} \\ 
&= \begin{cases} [C(\alpha): C(\alpha)^{p^x}] - [C(\alpha):C(\alpha)^{p^{x-1}}] &\text{ if $A \in C(\alpha)^{p^x}$} \\
-[C(\alpha): C(\alpha)^{p^{x-1}}] &\text{ if $A \in C(\alpha)^{p^{x-1}} \backslash C(\alpha)^{p^x}$} \\ 0 &\text{ otherwise}, \end{cases} \end{align*} 
Dividing out by the scaling factor $\#C^{\star}(\alpha, x) = \#C(\alpha, x) - \#C(\alpha, x-1) = [C(\alpha): C(\alpha)^{p^x}] - [C(\alpha): C(\alpha)^{p^{x-1}}]$
then gives the stated relations. \end{proof}

We shall consider the corresponding averages over primitive ring class characters of conductor $\mathfrak{p}^{\alpha}$
\begin{align}\label{primitive} P_{\alpha}(\pi, \chi) 
&:=  \frac{1}{\#C^{\star}(\alpha)} \sum\limits_{\rho \in C(\alpha)^{\vee} \atop \operatorname{primitive}} L(1/2, \pi \times \rho \chi \circ {\bf{N}}), \end{align}
the subaverage over characters inducing a given character $\rho_0$ on the torsion subgroup $C_0 = C(\infty)_{\operatorname{tors}}$, 
\begin{align}\label{tame} P_{\alpha, \rho_0}(\pi, \chi) &= \frac{1}{\#P(\alpha, \rho_0)} \sum_{\rho \in P(\alpha, \rho_0)} L(1/2, \pi \times \rho \chi \circ {\bf{N}}), \end{align}
and also the subaverage over characters of exact order $p^x$ with $x = \operatorname{ord}_p(\#C(\alpha))$, 
\begin{align}\label{galois} G_{\alpha}(\pi, \chi; x) &= \frac{1}{\#C^{\star}(\alpha, x)} 
\sum\limits_{ {\rho \in C(\alpha)^{\vee} \atop \rho^{p^x}  = {\bf{1}}} \atop \rho^{p^y} \neq {\bf{1}} ~ \forall ~ 0 \leq y < x }
L(1/2, \pi \times \rho \chi \circ {\bf{N}}). \end{align}

\subsubsection{Classical descriptions of the averages} 

Given a class $A \in C(\alpha)$, let us fix a positive definite quadratic form $f_A(x, y) = a_A x^2 + b_A xy + c_A y^2$ corresponding to $A$, 
i.e.~under the bijection between the class group $\mathcal{C}(\alpha)$ of positive definite binary quadratic forms of discriminant 
$\operatorname{disc}(\mathcal{O}_{\mathfrak{p}^{\alpha}}) = \mathfrak{D} \mathfrak{p}^{2\alpha}$ and the ideal class group $C(\alpha)$ 
of $\mathcal{O}_{\mathfrak{p}^{\alpha}}$ given by the map sending the 
class of a quadratic form representative $f_A(x, y) = a_Ax^2 + b_A xy + c_A y^2$ in $\mathcal{C}(\alpha)$ 
to the class of the proper integral ideal $\mathfrak{a}_A$ in $C(\alpha)$ with the $\mathcal{O}_F$-basis  
$[a_A, (-b_A + \mathfrak{p}^{\alpha} \sqrt{\mathfrak{D}})/2]$ (cf.~\cite[Theorem 7.7]{Cox}). 
Here, we also write $\mathfrak{p}^{\alpha}$ and $\mathfrak{D}$ to denote fixed $F$-integer representatives 
of the respective ideals $\mathfrak{p}^{\alpha} \subset \mathcal{O}_F$ and $\mathfrak{D} \subset \mathcal{O}_F$, 
and simplify notation in this way henceforth. Let $r_A$ denote the corresponding counting function. 
Hence, for a nonzero integral ideal $\mathfrak{n} \subset \mathcal{O}_F$, $r_A(\mathfrak{n})$ can be defined as the number  
of ideals $\mathfrak{a}$ in the class $A \in C(\alpha)$ of relative norm ${\bf{N}}_{K/F}(\mathfrak{a}) = \mathfrak{a} \overline{\mathfrak{a}} = \mathfrak{n}$.
Writing $w_K$ to denote the number of automorphs of the quadratic form $f_A$, 
or equivalently the number of units in $\mathcal{O}_K^{\times}$ which do not factor through $\mathcal{O}_F^{\times}$ (which is finite), 
this function $r_A(\mathfrak{n})$ can also be parametrized in terms of any representative $f_A(x, y)$ of $\mathcal{C}(\alpha)$ by
\begin{align}\label{ran} r_A(\mathfrak{n}) &= \frac{1}{w_K} \cdot \# \left\lbrace a, b \in \mathcal{O}_F/\mathcal{O}_F^{\times} : f_A(a, b) = \mathfrak{n} \right\rbrace. \end{align}
Here, the set of all $a, b \in \mathcal{O}_F/\mathcal{O}_F^{\times}$ refers to the lattice of integers in $F^2$, up to the action of units $\mathcal{O}_F^{\times}$;
we shall use the same notation later for the corresponding sums.
Note that this parametrization $(\ref{ran})$ is not unique, i.e.~as the choice of representative $f_A(x, y)$ is not unique. 
We shall later often assume that $f_A(x, y) = a_A x^2 + b_A xy + c_A y^2$ is the unique reduced representative
for the class, so that the $F$-integers $a_A$ and $c_A$ are both totally positive, with ${\bf{N}} b_A \leq {\bf{N}}a_A \leq {\bf{N}}c_A$,
and $b_A$ is totally positive if either ${\bf{N}} b_A = {\bf{N}} a_A$ or $a_A = c_A$. 
We also know that $a_A = 1$ in the special case where $A = {\bf{1}} \in C(\alpha)$ is the principal class, 
from which it follows that the corresponding last coefficient $c_A$ for the 
reduced form has norm roughly equal to that of the discriminant $\mathfrak{D} \mathfrak{p}^{2 \alpha}$ 
of the order $\mathcal{O}_{\mathfrak{p}^{\alpha}} \subset \mathcal{O}_K$.

Recall that we write $c(\pi \times \mathcal{W}) = {\bf{N}}(\mathfrak{D}^2 c(\pi_K) c(\mathcal{W})^2)$ to denote the conductor of 
$L(s, \pi \times \mathcal{W})$, where $c(\mathcal{W}) \subset \mathcal{O}_K$ is the conductor of the Hecke character $\mathcal{W}$
(as an ideal of $\mathcal{O}_K$ rather than $\mathcal{O}_F$).
Taking $\mathcal{W} = \rho \chi \circ {\bf{N}}$ with $\rho$ a primitive ring class character of conductor $\mathfrak{p}^{\alpha}$ 
and $\chi$ a primitive even Dirichlet character of conductor $p^{\beta}$ as we do, we have the more explicit formula for the conductor
\begin{align*} c(\pi \times \rho  \chi \circ {\bf{N}}) 
&= {\bf{N}}(\mathfrak{D}^2 c(\pi_K) \left( \operatorname{lcm}(\mathfrak{p}^{\alpha}, p^{\beta} \mathcal{O}_F)\mathcal{O}_K \right)^2)
= {\bf{N}}(\mathfrak{D}^2 c(\pi_K)) \cdot p^{4d \max (\alpha, \beta)}. \end{align*}
Recall as well that we introduce an unbalancing parameter $Z >0$ in Lemma \ref{AFE}. 
Given any such choice of real parameter $Z >0$, then us define the corresponding sums 
\begin{align}\label{DA1} D_{A, 1}(\pi, \chi; Z) 
&= \sum_{\mathfrak{m} \neq \lbrace 0 \rbrace \subset \mathcal{O}_F } 
\frac{ \omega \eta(\mathfrak{m}) \chi^2( {\bf{N}} \mathfrak{m} ) }{ {\bf{N}} \mathfrak{m}} \sum_{\mathfrak{n} \neq \lbrace 0 \rbrace \subset \mathcal{O}_F } 
\frac{\lambda(\mathfrak{n}) \chi( {\bf{N}} \mathfrak{n}) r_A(\mathfrak{n}) }{ {\bf{N}} \mathfrak{n}^{\frac{1}{2}}} 
V_1 \left( Z {\bf{N}} ( \mathfrak{m}^2 \mathfrak{n} ) \right). \end{align}
and 
\begin{align}\label{DA2} D_{A, 2}( \pi, \chi; Z) &= \sum_{\mathfrak{m} \neq \lbrace 0 \rbrace \subset \mathcal{O}_F } 
\frac{ \overline{\omega} \eta(\mathfrak{m}) \overline{\chi}^2( {\bf{N}} \mathfrak{m} ) }{ {\bf{N}} \mathfrak{m}} 
\sum_{\mathfrak{n} \neq \lbrace 0 \rbrace \subset \mathcal{O}_F } 
\frac{ \overline{ \lambda(\mathfrak{n}) \chi( {\bf{N}} \mathfrak{n})} r_A(\mathfrak{n}) }{ {\bf{N}} \mathfrak{n}^{\frac{1}{2}}} 
V_2 \left( \frac{ {\bf{N}} ( \mathfrak{m}^2 \mathfrak{n} )}{ Z {\bf{N}}(\mathfrak{D}^2 c(\pi_K)) p^{4 d \max(\alpha, \beta)}  }  \right). \end{align}
If $A \in Z(\alpha) = \ker(j:C(\alpha) \rightarrow C(\alpha-1))$ but $A \neq {\bf{1}} \in C(\alpha)$ as in Lemma \ref{MI} (i), i.e.~
in which case it is identified with the principal class $j(A) = {\bf{1}} \in C(\alpha-1)$ in taking sum over characters $C(\alpha-1)^{\vee}$
to derive the orthogonality relation, then we write $r_{\bf{1}}^{\star}$ for the corresponding counting function. We also define the sums 
\begin{align}\label{DA1star} D^{\star}_{{\bf{1}}, 1}(\pi, \chi; Z) 
&= \sum_{\mathfrak{m} \neq \lbrace 0 \rbrace \subset \mathcal{O}_F } 
\frac{ \omega \eta(\mathfrak{m}) \chi^2( {\bf{N}} \mathfrak{m} ) }{ {\bf{N}} \mathfrak{m}} \sum_{\mathfrak{n} \neq \lbrace 0 \rbrace \subset \mathcal{O}_F } 
\frac{\lambda(\mathfrak{n}) \chi( {\bf{N}} \mathfrak{n}) r_{\bf{1}}^{\star}(\mathfrak{n}) }{ {\bf{N}} \mathfrak{n}^{\frac{1}{2}}} 
V_1 \left( Z {\bf{N}} ( \mathfrak{m}^2 \mathfrak{n} ) \right). \end{align}
and 
\begin{align}\label{DA2star} D^{\star}_{{\bf{1}}, 2}( \pi, \chi; Z) &= \sum_{\mathfrak{m} \neq \lbrace 0 \rbrace \subset \mathcal{O}_F } 
\frac{ \overline{\omega} \eta(\mathfrak{m}) \overline{\chi}^2( {\bf{N}} \mathfrak{m} ) }{ {\bf{N}} \mathfrak{m}} 
\sum_{\mathfrak{n} \neq \lbrace 0 \rbrace \subset \mathcal{O}_F } 
\frac{ \overline{ \lambda(\mathfrak{n}) \chi( {\bf{N}} \mathfrak{n})} r_{\bf{1}}^{\star}(\mathfrak{n}) }{ {\bf{N}} \mathfrak{n}^{\frac{1}{2}}} 
V_2 \left( \frac{ {\bf{N}} ( \mathfrak{m}^2 \mathfrak{n} )}{ Z {\bf{N}}(\mathfrak{D}^2 c(\pi_K)) p^{4 d \max(\alpha-1, \beta)}  }  \right). \end{align}
Finally, recall that we write $\eta = \eta_{K/F}$ to denote the idele class character of $F$ associated to the quadratic extension $K/F$, 
and $\mathfrak{d} = \mathfrak{d}_F$ the different of $F$. We have the following explicit formula for the average $(\ref{galois})$.

\begin{lemma}\label{RCGA} 

Fix prime ideal $\mathfrak{p} \subset \mathcal{O}_F$ with underlying rational prime $p$ for which 
$(c(\pi), \mathfrak{D} \mathfrak{p}) = (\mathfrak{D}, \mathfrak{p}) = 1$. 
Fix a primitive even Dirichlet character $\chi \bmod p^{\beta}$ for some integer $\beta \geq 0$. 
Fix a sufficiently large integer $\alpha \geq 1$, and let $x = \operatorname{ord}_p(\#C(\alpha))$
be the exponent of $p$ in the order of the class group of $\mathcal{O}_{\mathfrak{p}^{\alpha}}$.
Then, the average $P_{\alpha}(\pi, \chi)$ over primitive ring class characters of conductor $\mathfrak{p}^{\alpha}$ is given for any choice of $Z>0$ by  
\begin{align*} &P_{\alpha}(\pi, \chi) \\
&= \left( 1 - \frac{\#C(\alpha-1)}{\# C^{\star}(\alpha)} \right) \left( D_{{\bf{1}}, 1}(\pi, \chi; Z) + \epsilon \cdot D_{{\bf{1}}, 2}(\pi, \chi; Z) \right) 
-\frac{\#C(\alpha-1)}{\# C^{\star}(\alpha)} \left( D^{\star}_{{\bf{1}}, 1}(\pi, \chi; Z) + \epsilon \cdot D^{\star}_{{\bf{1}}, 2}(\pi, \chi; Z) \right), \end{align*} 
the subaverage $P_{\alpha, \rho_0}(\pi, \chi)$ over primitive ring class characters of conductor $\mathfrak{p}^{\alpha}$ inducing a given character
$\rho_0$ on the torsion subgroup $C_0 = C(\infty)_{\infty}$ for any choice of $Z>0$ by 
\begin{align*} &P_{\alpha, \rho_0}(\pi, \chi) \\ &= \sum_{A \in C_0(\alpha)} \rho_0(A) \left( D_{A, 1}(\pi, \chi; Z) + \epsilon \cdot D_{A, 2}(\pi, \chi; Z) \right)
- \frac{\#\overline{C}(\alpha-1)}{\#P(\alpha, \rho_0)} \sum_{A \in C_0(\alpha-1) \atop A \notin C_0(\alpha)} 
\rho_0(A) \left( D_{A, 1}(\pi, \chi; Z) + \epsilon \cdot D_{A, 2}(\pi, \chi; Z) \right), \end{align*}
and the subaverage $G_{\alpha}(\pi, \chi; x)$ over primitive ring class characters 
of conductor $\mathfrak{p}^{\alpha}$ and exact order $p^x$ by    
\begin{align*} &G_{\alpha}(\pi, \chi; x) \\ &= \sum_{A \in C(\alpha)^{p^x}} \left( D_{A, 1}(\pi, \chi; Z) + \epsilon \cdot D_{A, 2}(\pi, \chi; Z) \right) 
-\frac{ \#C(\alpha, x-1)}{\#C^{\star}(\alpha, x)} \sum_{ A \in C(\alpha)^{p^{x-1}} \atop A \notin C(\alpha)^{p^x} }
\left( D_{A, 1}(\pi, \chi; Z) + \epsilon \cdot D_{A, 2}(\pi, \chi; Z) \right). \end{align*} 
Here,
\begin{align*} \epsilon = \epsilon(1/2, \pi \times \rho \chi \circ {\bf{N}}) 
&= \omega(\operatorname{lcm}(\mathfrak{p}^{\alpha}, p^{\beta} \mathcal{O}_F)) \cdot \eta(p^{4 \beta} \mathfrak{d}c(\pi)) \cdot 
\epsilon(1/2, \pi) \cdot \chi( {\bf{N}}(\mathfrak{d}^2 c(\pi)^2 \mathfrak{D}^8)) 
\cdot \left( \frac{\tau(\chi^2)}{p^{\frac{\beta}{2}}} \right)^{4d} \end{align*}
is the (unique) root number associated to each primitive ring class character $\rho$ of conductor $\mathfrak{p}^{\alpha}$ in the average. \end{lemma}

\begin{proof} 

This is simple to deduce using the formula of Lemma \ref{AFE} to express the central values, switching the order of summation, 
then applying the respective orthogonality relations of Lemma \ref{MI} (i), (ii), and (iii) to evaluate each of the coefficients. 
For the primitive average $P_{\alpha}(\pi, \chi)$, we also identify the sum over all contributions $A \in Z(\alpha)$ with 
the principal class in $C(\alpha-1)$ to derive a more convenient expression. We also use Proposition \ref{RNF} to describe the root number. \end{proof}

To estimate the averages $P_{\alpha}(\pi, \chi)$ and $G_{\alpha}(\pi, \chi; x)$ as described in Lemma \ref{RCGA}, 
we shall open up the counting functions $r_A(\mathfrak{n})$ via $(\ref{ran})$ after fixing a representative $f_A(x, y)$, which leads us to study the sums 
\begin{align}\label{DA1exp} D_{A, 1}(\pi, \chi; Z) &= \frac{1}{w_K} \sum_{ \mathfrak{m}  \subset \mathcal{O}_F \atop \mathfrak{m} \neq \lbrace 0 \rbrace } 
\frac{\eta \omega(\mathfrak{m}) \chi^2({\bf{N}} \mathfrak{m})}{ {\bf{N}} \mathfrak{m}}
\sum_{a, b \in \mathcal{O}_F/\mathcal{O}_F^{\times} \atop f_A(a, b) \neq 0} \frac{\lambda(f_A(a, b)) \chi({\bf{N}} f_A(a, b))}{ {\bf{N}} f_A(a, b)^{\frac{1}{2}}} 
V_1 \left( {\bf{N}}(\mathfrak{m}^2 f_A(a,b)) Z  \right) \end{align} and 
\begin{align}\label{DA2exp} D_{A, 2}(\pi, \chi; Z) &= \frac{1}{w_K}  \sum_{ \mathfrak{m}  \subset \mathcal{O}_F \atop \mathfrak{m} \neq \lbrace 0 \rbrace }
\frac{\eta \overline{\omega} (\mathfrak{m}) \overline{\chi}^2 ( {\bf{N}} \mathfrak{m} ) }{ {\bf{N}} \mathfrak{m}}
\sum_{a, b \in \mathcal{O}_F/\mathcal{O}_F^{\times} \atop f_A(a, b) \neq 0}  \frac{ \overline{ \lambda(f_A(a, b))}  \overline{ \chi ( {\bf{N}} f_A(a, b)) } }{ {\bf{N}} f_A(a, b)^{\frac{1}{2}}} 
V_2 \left( \frac{ {\bf{N}} (\mathfrak{m}^2 f_A(a,b))}{Z {\bf{N}}(\mathfrak{D}^2 c(\pi_K)) p^{4d \max (\alpha, \beta)}  }\right). \end{align} 
The difference sums $D_{{\bf{1}}, j}^{\star}(\pi, \chi; Z)$ for $j=1,2$ are estimated in the same way, and so we omit them from the main discussion,
including them at the end when we assemble various estimates to describe the averages. 

\subsubsection{Estimates} 

To estimate either of the averages $P_{\alpha}(\pi, \chi)$ and $G_{\alpha}(\pi, \chi; x)$, we shall first consider the contributions 
from the $b = 0$ and $a=0$ terms in $(\ref{DA1exp})$ and $(\ref{DA2exp})$ with the following estimate. 
Fix a class $A \in C(\alpha)$, together with a reduced quadratic form class representative $f_A(x, y) = a_A x^2 + b_A xy + c_A y^2$. 
Let us for a given divisor $q$ of $a_A$ consider the shift by right multiplication by 
$ \left( \begin{array}{cc} q & ~\\ ~& 1 \end{array}\right)$ of a vector $\phi \in V_{\pi}$, 
\begin{align*} \phi \in V_{\pi}, \phi(g) &\longmapsto \phi \left( g  \left( \begin{array}{cc} q^{-1} & ~\\ ~& 1 \end{array}\right) \right) \in V_{\pi}. \end{align*} 
Here, we also write $q \in {\bf{A}}_{F, f}^{\times}$ to denote a fixed finite idele representative of the $F$-integer $q \in \mathcal{O}_F$. 
We also write $\lambda^{(q)}(a) = \lambda(q^{-1}a)$ as shorthand to denote the coefficients in the corresponding Dirichlet series expansion,
and consider special values of the corresponding partial congruence symmetric square $L$-function,
defined here (first for $\Re(s) >1$) by the Dirichlet series expansion 
\begin{equation}\begin{aligned}\label{PDE} L^{\star}_{{\bf{1}}, q}(s, \operatorname{Sym}^2 \pi^{(q)} \otimes \chi \circ {\bf{N}}) 
&:= L^{\star}_{{\bf{1}}}(2s, \omega \chi^2 \circ {\bf{N}}) 
\sum_{a \neq 0 \in \mathcal{O}_F\mathcal{O}_F^{\times} \atop a \equiv 0 \bmod q \mathcal{O}_F} \frac{\lambda^{(q)}(a^2)\chi({\bf{N}}a^2) }{{\bf{N}}a^s} \\
&= L^{\star}_{{\bf{1}}}(2s, \omega \chi^2 \circ {\bf{N}}) 
\sum_{a \neq 0 \in \mathcal{O}_F/\mathcal{O}_F^{\times} \atop a \equiv 0 \bmod q \mathcal{O}_F} 
\frac{ \lambda ( a^2 q^{-1} )\chi( {\bf{N}}a^2) }{ {\bf{N}} a^s}, \end{aligned}\end{equation}
i.e.~where the symbol $\pi^{(q)}$ is also shorthand notation. Note that if $q=1$, then this is simply
\begin{align*}L^{\star}_{{\bf{1}}, 1}(s, \operatorname{Sym}^2 \pi^{(q)} \otimes \chi \circ {\bf{N}}) 
= L^{\star}_{{\bf{1}}}(s, \operatorname{Sym}^2 \pi^{(q)} \otimes \chi \circ {\bf{N}}). \end{align*}
If $a_A=1$, as will often be the case later, this is the only divisor we consider. 

\begin{proposition}\label{RCres} 

Assume that the representation $\pi \otimes \xi$ is non-dihedral\footnote{This is assumed for simplicity; a variation of this and subsequent bounds
can be derived in the dihedral case, but we do not pursue it here directly.}, i.e.~that $\pi$ is not the induced representation of some Hecke character
of a quadratic extension of $F$ in the special case where $\xi$ is trivial. Let us keep the setup of Lemma \ref{RCGA}, fixing an integer $\alpha \geq 0$, 
as well as a primitive even Dirichlet character $\chi \bmod p^{\beta}$ for some integer $\beta \geq 0$. 
Given a class $A \in C(\alpha)$, we fix $f_A(x, y) = a_A x^2 + b_A x y + c_A y^2$ to be the reduced quadratic form class representative. 
We again write $0 \leq \theta_0 \leq 1/2$ to denote the best approximation to the generalized Ramanujan conjecture
for $\operatorname{GL}_2({\bf{A}}_F)$-automorphic forms in the level aspect.
Let us for simplicity write $c(\omega \eta \chi^2 \circ {\bf{N}}) = {\bf{N}}(\mathfrak{D}c(\pi) p^{\beta} \mathcal{O}_F)$
to denote the absolute norm of the conductor of the Hecke $L$-function $L(s, \omega \eta \chi^2 \circ {\bf{N}})$. 
Writing $\mu$ to denote the M\"obius function on ideals in $\mathcal{O}_F$,
let us also define for any $F$-integer $a_A$ the residual quantity 

\begin{equation}\begin{aligned}\label{residue}  
R(\pi, \chi, a_A) &= \frac{1}{w_K} \cdot L(1, \omega \eta \chi^2 \circ {\bf{N}}) \cdot \sum\limits_{ q  \mid a_A  } 
\left( \frac{\mu(q) \lambda^{(q)}(a_A) \omega(q) \chi ({\bf{N}}a_A ) }{ {\bf{N}}a_A^{\frac{1}{2}} } \right) 
\cdot \frac{  L^{\star}_{{\bf{1}}, q}(1, \operatorname{Sym}^2 \pi^{(q)} \otimes \chi \circ {\bf{N}})}{ L^{\star}_{{\bf{1}}}(2, \omega \chi^2 \circ {\bf{N}}) }. \end{aligned}\end{equation}
Note that the sum $(\ref{residue})$ does not vanish when $a_A=1$ (whence there is only one divisor $q=a_A =1$)
as a consequence of the nonvanishing of the symmetric square $L$-function $L(s, \operatorname{Sym}^2 \pi \otimes \xi)$ at $s=1$. 
Let us also write $0 \leq \delta_3 \leq 1/4$ to denote the best subconvexity estimate for the twisted symmetric square $L$-function
$L(s, \operatorname{Sym}^2 \pi \otimes \chi \circ {\bf{N}})$ towards the generalized Lindel\"of hypothesis for $\operatorname{GL}_3({\bf{A}}_F)$-automorphic $L$-functions
in the level aspect, i.e.~so that $L(1/2, \operatorname{Sym}^2 \pi \otimes \chi \circ {\bf{N}}) \ll_{\varepsilon}  {\bf{N}}(c(\pi)p^{3 \beta} \mathcal{O}_F)^{\delta_3 + \varepsilon}$
for any choice of $\varepsilon >0$.\\

\noindent (i) The $b = 0$ contributions $D_{A, 1}(\pi, \chi; Z)\vert_{b=0}$ in the expansion $(\ref{DA1exp})$ of $D_{A, 1}(\pi, \chi; Z)$ 
can be estimated for any choice of parameter $Z = Y^{-1}$ with $Y > {\bf{N}} a_A$ as follows: We have for any $\varepsilon >0$ that  
\begin{align*} D_{A, 1}(\pi, \chi; Y^{-1})\vert_{b=0} &= R(\pi, \chi, a_A) 
+ O_{\varepsilon} \left( c(\omega \eta \chi^2 \circ {\bf{N}})^{\frac{1}{4} - \frac{(1-2\theta_0)}{16} + \varepsilon} \cdot 
 {\bf{N}}(c(\pi)p^{3 \beta} \mathcal{O}_F)^{\delta_3 + \varepsilon} \cdot \left( \frac{ {\bf{N}} a_A}{Y} \right)^{\frac{1}{4}}  \right). \end{align*} 

\noindent (ii) The $b=0$ contributions $D_{A, 2}(\pi, \chi; Z)\vert_{b=0}$ in the expansion $(\ref{DA2exp})$ of $D_{A, 2}(\pi, \chi; Z)$ can be estimated 
for any choice of $Z > 0$ for which $ Z \cdot {\bf{N}}(\mathfrak{D}^2 c(\pi_K)) \cdot p^{4d \max (\alpha, \beta)} > {\bf{N}} a_A$ as follows: For any small $\epsilon >0$, 
\begin{align*} &D_{A, 2}(\pi, \chi; Z)\vert_{b=0} \\ &= \frac{\widetilde{L}_{\infty}(\frac{1}{2})}{L_{\infty}(\frac{1}{2})} \cdot 
R(\widetilde{\pi}, \overline{\chi}, a_A) + O_{\varepsilon} \left( c(\omega \eta \chi^2 \circ {\bf{N}})^{\frac{1}{4} - \frac{(1-2\theta_0)}{16} + \varepsilon} \cdot 
{\bf{N}}(c(\pi)p^{3 \beta} \mathcal{O}_F)^{\delta_3 + \varepsilon} \cdot 
\left( \frac{ {\bf{N}} a_A}{Z  {\bf{N}}(\mathfrak{D}^2 c(\pi_K)) p^{4d \max (\alpha, \beta)}} \right)^{\frac{1}{4}}  \right). \\ \end{align*} 

If on the other hand $Z >0$ is chosen so that $0 < Z \cdot {\bf{N}}(a_A \mathfrak{D}^2 c(\pi_K)) \cdot p^{4d \max (\alpha, \beta)} < 1$, then we have 
\begin{align*} D_{A, 2}(\pi, \chi; Z)\vert_{b=0}  
&=  O_{B} \left( \left( Z {\bf{N}}(a_A^{-1} \mathfrak{D}^2 c(\pi_K) p^{4d \max (\alpha, \beta)})  \right)^{B} \right) \end{align*} and more generally 
\begin{align*} D_{A, 2}(\pi, \chi; Z)  &=  O_{B} \left( \left( Z {\bf{N}}(a_A^{-1}\mathfrak{D}^2 c(\pi_K) p^{4d \max (\alpha, \beta)})  \right)^{B} \right) \end{align*}
for any choice(s) of constant(s) $B \geq 1$. \end{proposition}

\begin{proof} 

For (i), we expand out the sum and open up the definition of the cutoff function $V_1$ to find that 
\begin{align*} D_{A, 1}(\pi, \chi; Z)\vert_{b=0} &= \frac{1}{w_K} \sum_{ \mathfrak{m} \neq \lbrace 0 \rbrace \subset \mathcal{O}_F } 
\frac{ \eta \omega(\mathfrak{m}) \chi^2( {\bf{N}} \mathfrak{m} ) }{ {\bf{N}} \mathfrak{m} } 
\sum_{ a \neq 0 \in \mathcal{O}_F/\mathcal{O}_F^{\times}}  \frac{ \lambda(a_A a^2) \chi( {\bf{N}} (a_A a^2)) }{ {\bf{N}}(a_A a^2)^{\frac{1}{2}}} 
V_1 \left(  {\bf{N}}( \mathfrak{m}^2 a_A a^2) Z \right). \end{align*} 
Let us first consider the Hecke relation, which for each $F$-integer $a \in \mathcal{O}_F$ in the sum takes the form 
\begin{align*}\lambda(a_A a^2) &= \sum_{q \mid \gcd (a_A, a^2)} \mu \left(q \right) \omega(q) 
\lambda \left( \frac{a_A}{q} \right) \lambda \left( \frac{a}{q} \right). \end{align*}
Hence, we find that for any $\mathfrak{m} \subset \mathcal{O}_F$ in the latter expression for $D_{A, 1}(\pi, \chi; Z)\vert{b=0}$, 
\begin{equation*}\begin{aligned} 
&\sum_{ a \neq 0 \in \mathcal{O}_F/\mathcal{O}_F^{\times}}  \frac{ \lambda(a_A a^2) \chi( {\bf{N}} (a_A a^2)) }{ {\bf{N}}(a_A a^2)^{\frac{1}{2}}} 
V_1 \left(  {\bf{N}}( \mathfrak{m}^2 a_A a^2) Z \right) \\ 
&= \sum_{ a \neq 0 \in \mathcal{O}_F/\mathcal{O}_F^{\times}}  \sum_{q \mid \gcd (a_A, a)}
\frac{  \mu \left( q \right) \omega (q)  \lambda \left( \frac{a_A}{q} \right) \lambda \left( \frac{a^2}{q} \right) \chi( {\bf{N}} (a_A a^2)) }{ {\bf{N}}(a_A a^2)^{\frac{1}{2}}} 
V_1 \left(  {\bf{N}}( \mathfrak{m}^2 a_A a^2) Z \right) \\ &= 
\sum_{q \mid a_A} \mu(q) \omega (q) \frac{\lambda \left( \frac{a_A}{q} \right) \chi({\bf{N}} a_A)  }{ {\bf{N}}a_A^{\frac{1}{2}} }
\sum_{a' \neq 0 \in \mathcal{O}_F/ \mathcal{O}_F^{\times} } \frac{ \lambda \left( \frac{(qa')^2}{q} \right) \chi( {\bf{N}}(q^2 a'^2 )) }{ {\bf{N}}((q a')^2)^{\frac{1}{2}}} 
V_1 \left(  {\bf{N}}( \mathfrak{m}^2 a_A (q a')^2) Z \right) \\ 
&= \sum_{q \mid a_A} \mu(q) \omega (q) \frac{\lambda^{(q)} \left( a_A \right) \chi({\bf{N}} a_A)  }{ {\bf{N}}a_A^{\frac{1}{2}} }
\sum_{a' \neq 0 \in \mathcal{O}_F/ \mathcal{O}_F^{\times} } \frac{ \lambda^{(q)} \left( q^2 a'^2 \right) \chi( {\bf{N}} (q^2 a'^2)) }{ {\bf{N}}(q a'^2)^{\frac{1}{2}}} 
V_1 \left(  {\bf{N}}( \mathfrak{m}^2 a_A (q a')^2) Z \right) \\
&=  \sum_{q \mid a_A} \mu(q) \omega (q) \frac{\lambda^{(q)} \left( a_A \right) \chi({\bf{N}} a_A)  }{ {\bf{N}} a_A^{\frac{1}{2}} }
\sum_{a \neq 0 \in \mathcal{O}_F/\mathcal{O}_F^{\times} \atop a \equiv 0 \bmod q \mathcal{O}_F} \frac{ \lambda^{(q)} \left( a^2 \right) \chi( {\bf{N}}a^2 ) }{ {\bf{N}}a} 
V_1 \left(  {\bf{N}}( \mathfrak{m}^2 a_A a^2) Z \right). \end{aligned}\end{equation*}
Let us now consider any of the inner sums corresponding to a given divisor $q \mid a_A$ in this latter expression, 
whose contribution to $D_{A, 1}(\pi, \chi; Z) \vert_{b=0}$ is then seen to be given explicitly by 
\begin{align*} & \mu(q) \omega (q) \frac{\lambda^{(q)} \left( a_A \right) \chi({\bf{N}} a_A)  }{ {\bf{N}} a_A^{\frac{1}{2}} }
\int_{\Re(s)=2} \frac{k(s)}{s} \sum_{\mathfrak{m} \neq \lbrace 0 \rbrace \subset \mathcal{O}_F} 
\frac{\eta \omega(\mathfrak{m}) \chi^2( {\bf{N}} \mathfrak{m})}{ {\bf{N}} \mathfrak{m}^{1 + 2s}} 
\sum_{a \neq 0 \in \mathcal{O}_F/\mathcal{O}_F^{\times} \atop a \equiv 0 \bmod q \mathcal{O}_F} 
\frac{ \lambda^{(q)}(a^2) \chi( {\bf{N}} a^2) }{ {\bf{N}}(a^2)^{\frac{1}{2} + s} } (a_A Z)^{-s} \frac{ds}{2 \pi i} \\ 
&= \mu(q) \omega (q) \frac{\lambda^{(q)} \left( a_A \right) \chi({\bf{N}} a_A)  }{ {\bf{N}} a_A^{\frac{1}{2}} }
\int_{\Re(s)=2} \frac{k(s)}{s} \cdot L(2s+1, \omega\eta \chi^2 \circ {\bf{N}} ) \cdot 
\frac{ L_{{\bf{1}}, q}^{\star}(2s + 1, \operatorname{Sym}^2 \pi^{(q)} \otimes \chi \circ  {\bf{N}} ) }{L_{ {\bf{1}}}^{\star}(4s+ 2, \omega \chi^2 \circ {\bf{N}} ) } 
(a_A Z)^{-s} \frac{ds}{2 \pi i}. \end{align*} 
Shifting the line of integration to $\Re(s)=-1/4$, we then cross a simple pole at $s=0$ of residue 
\begin{align*} \mu(q) \omega (q) \frac{\lambda^{(q)} \left( a_A \right) \chi({\bf{N}} a_A)  }{ {\bf{N}} a_A^{\frac{1}{2}} } \cdot L(1, \omega \eta \chi^2 \circ {\bf{N}}) 
\cdot \frac{ L_{{\bf{1}}, q}^{\star}(1, \operatorname{Sym}^2 \pi^{(q)} \otimes \chi \circ {\bf{N}})}{ L_{\bf{1}}^{\star}(2, \omega \chi^2 \circ {\bf{N}})  }. \end{align*}
The remaining integral is seen easily to be bounded by 
\begin{align*} \ll_{\varepsilon}  c(\omega \eta \chi^2 \circ {\bf{N}})^{\frac{1}{4} - \frac{(1-2\theta_0)}{16} + \varepsilon} \cdot 
 {\bf{N}}(c(\pi)p^{3 \beta} \mathcal{O}_F)^{\delta_3 + \varepsilon} \cdot ({\bf{N}} a_A Z)^{\frac{1}{4}}\end{align*}
using Stirling's approximation formula with a suitable subconvexity bound to estimate the Hecke $L$-series. 
Here, we use the Burgess-like subconvexity bound 
$L(1/2, \omega \eta \chi^2 \circ {\bf{N}}) \ll_{\varepsilon} c(\omega \eta \chi^2 \circ {\bf{N}})^{\frac{1}{4} - \frac{(1-2\theta_0)}{16} + \varepsilon}$
shown in \cite{Wu}, as well as the best existing subconvexity bound towards the generalized Lindel\"of hypothesis in the level aspect 
for the twisted symmetric square $L$-function 
$L(s, \operatorname{Sym}^2 \pi \otimes \chi \circ {\bf{N}}) \ll_{\varepsilon}  {\bf{N}}(c(\pi)p^{3 \beta} \mathcal{O}_F)^{\delta_3 + \varepsilon}$,
e.g.~viewed as a $\operatorname{GL}_3({\bf{A}}_F)$-automorphic $L$-function via the Gelbart-Jacquet lift (cf.~also \cite[Lemma 4.1]{CM}).

For (ii), we proceed in the same way, first noting that for any $\mathfrak{m} \subset \mathcal{O}_F$ we have the Hecke decomposition 
\begin{equation*}\begin{aligned} &D_{A, 2}(\pi, \chi; Z)\vert_{b=0} 
= \frac{1}{w_K}\sum_{\mathfrak{m} \subset \mathcal{O}_F} 
\frac{\overline{\omega} \eta(\mathfrak{m}) \overline{\chi}^2({\bf{N}} \mathfrak{m})}{{\bf{N}} \mathfrak{m}}
\sum_{ a \neq 0 \in \mathcal{O}_F / \mathcal{O}_F^{\times}}  \frac{ \overline{ \lambda(a_A a^2) \chi( {\bf{N}} (a_A a^2))} }{ {\bf{N}}(a_A a^2)^{\frac{1}{2}}} 
V_2 \left( \frac{  {\bf{N}} ( \mathfrak{m}^2 a_A a^2) }{Z C} \right) \\ 
&= \frac{1}{w_K} \sum_{\mathfrak{m} \subset \mathcal{O}_F} \frac{\overline{\omega}\eta(\mathfrak{m}) \overline{\chi}^2({\bf{N}} \mathfrak{m})}{{\bf{N}} \mathfrak{m}}
\sum_{q \mid a_A} \mu(q) \overline{\omega(q)} \cdot \frac{\overline{ \lambda^{(q)} \left( a_A \right) \chi({\bf{N}} a_A)} }{ {\bf{N}} a_A^{\frac{1}{2}} }
\sum_{a \neq 0 \in \mathcal{O}_F/ \mathcal{O}_F^{\times} \atop a \equiv 0 \bmod q \mathcal{O}_F} \frac{ \overline{ \lambda^{(q)} \left( a^2 \right) \chi( {\bf{N}}a^2 )} }{ {\bf{N}}a} 
V_2 \left(  \frac{ {\bf{N}}( \mathfrak{m}^2 a_A a^2)}{ Z C} \right). \end{aligned}\end{equation*}
Here, we write $C = {\bf{N}}(\mathfrak{D}^2 c(\pi_K)) p^{4d \max (\alpha, \beta)}$ for simplicity to denote the conductor. 
Let us now consider any of the inner $q$-sums in this latter expansion, opening up the function $V_2$ to find 
\begin{align*} &\mu(q) \overline{\omega(q)} \cdot \frac{\overline{ \lambda^{(q)} \left( a_A \right) \chi({\bf{N}} a_A)} }{ {\bf{N}} a_A^{\frac{1}{2}} }
\sum_{ \mathfrak{m} \neq \lbrace 0 \rbrace \subset \mathcal{O}_F } 
\frac{ \eta \overline{\omega}(\mathfrak{m}) \overline{\chi}^2( {\bf{N}} \mathfrak{m} ) }{ {\bf{N}} \mathfrak{m} } 
\sum_{ a \neq 0 \in \mathcal{O}_F/\mathcal{O}_F^{\times} \atop a \equiv 0 \bmod q \mathcal{O}_F} 
\frac{ \overline{ \lambda^{(q)} \left( a^2 \right) \chi( {\bf{N}}a^2 )} }{ {\bf{N}}a}
V_2 \left(  \frac{ {\bf{N}}( \mathfrak{m}^2 a_A a^2)}{ Z C} \right) \\ 
&= \mu(q) \overline{\omega(q)} \cdot \frac{\overline{ \lambda^{(q)} \left( a_A \right) \chi({\bf{N}} a_A)} }{ {\bf{N}} a_A^{\frac{1}{2}} } 
\int_{\Re(s)=2} \frac{k(-s)}{s} \frac{\widetilde{L}_{\infty}(s + \frac{1}{2})}{L_{\infty}(-s + \frac{1}{2})} 
\sum_{\mathfrak{m} \neq \lbrace 0 \rbrace \subset \mathcal{O}_F} 
\frac{\eta \overline{\omega}(\mathfrak{m}) \overline{\chi}^2( {\bf{N}} \mathfrak{m})}{ {\bf{N}} \mathfrak{m}^{1 + 2s}} \\
&\times \sum_{a \neq 0 \in \mathcal{O}_F/\mathcal{O}_F^{\times} \atop a \equiv 0 \bmod q \mathcal{O}_F }  
\frac{ \overline{ \lambda^{(q)}(a^2) \chi( {\bf{N}} a^2)} }{ {\bf{N}} a^{1 + 2s} } 
\left( {\bf{N}}a_A^{-1} ZC \right)^{s} \frac{ds}{2 \pi i} \\ 
&= \mu(q) \overline{\omega(q)} \cdot \frac{\overline{ \lambda^{(q)} \left( a_A \right) \chi({\bf{N}} a_A)} }{ {\bf{N}} a_A^{\frac{1}{2}} }
\int_{\Re(s)=2} \frac{k(-s)}{s} \frac{\widetilde{L}_{\infty}(s + \frac{1}{4})}{L_{\infty}(-s + \frac{1}{2})} \\ &\times 
L(2s+1, \overline{\omega} \eta \overline{\chi}^2 \circ {\bf{N}} ) \cdot 
\frac{ L_{{\bf{1}},1}^{\star}(2s + 1, \operatorname{Sym}^2 \widetilde{\pi}^{(q)} \otimes \overline{\chi} \circ  {\bf{N}} ) }
{L_{ {\bf{1}}}^{\star}(4s + 2, \overline{\omega} \overline{\chi}^2 \circ {\bf{N}} ) } 
\left( \frac{ {\bf{N}}a_A}{ZC} \right)^{-s} \frac{ds}{2 \pi i}. \\ \end{align*}
Now if ${\bf{N}} a_A > ZC$, then we shift the contour leftward
to $\Re(s) = -1/8$ again to derive the stated estimate (again using the bounds described above for the contributions of the $L$-functions in the contour), 
noting that the same argument applies to estimate the entire sum $D_{A, 2}(\pi, \chi; Z)$. 
Otherwise, we shift the contour to the left, and use the bound of Lemma \ref{cutoff} for $V_2(y)$ to derive the stated estimate. \end{proof}

It remains to estimate the contribution from $b \neq 0$ terms in the region of moderate decay for 
cutoff functions $V_j$ in each each of the respective sums $D_{A, j}(\pi, \chi; Z)$. 
For all of the subsequent discussion following Proposition \ref{RCres} above, 
we shall choose the unbalancing parameter to be within the interval $0<Z<1$, 
and often simply $Z = Y^{-1}$ for $Y = C^{\frac{1}{2}}$ the square root of the conductor 
$C = {\bf{N}}(\mathfrak{D}^2 c(\pi_K) ) p^{4d \max(\alpha, \beta)}$
corresponding to a balanced approximate functional equation. By the decay properties of $V_j$, 
it will then do to bound the truncated finite double sums defined for an arbitrary small $\varepsilon >0$ by  
\begin{equation}\begin{aligned}\label{ts1} 
&D_{A, 1}^{\dagger}(\pi, \chi; Z) = \frac{1}{w_K} \sum_{ \mathfrak{m}  \subset \mathcal{O}_F \atop \mathfrak{m} \neq \lbrace 0 \rbrace } 
\frac{\eta \omega(\mathfrak{m}) \chi^2({\bf{N}} \mathfrak{m})}{ {\bf{N}} \mathfrak{m}} 
\sum\limits_{ {a, b \in \mathcal{O}_F / \mathcal{O}_F^{\times}  \atop b \neq 0} \atop {\bf{N}}(\mathfrak{m}^2 f_A(a,b))  \leq Z^{-1 + \varepsilon}   } 
\frac{\lambda_{\chi}(f_A(a,b)) }{ {\bf{N}}(f_A(a,b))^{\frac{1}{2}}} 
 V_1 \left( {\bf{N}}(\mathfrak{m}^2(f_A(a,b))) Z  \right) \end{aligned}\end{equation} and 
\begin{equation}\begin{aligned}\label{ts2} 
&D_{A, 2}^{\dagger}(\pi, \chi; Z) = \frac{1}{w_K}  \sum_{ \mathfrak{m}  \subset \mathcal{O}_F \atop \mathfrak{m} \neq \lbrace 0 \rbrace }
\frac{\eta \overline{\omega} (\mathfrak{m}) \overline{\chi}^2 ( {\bf{N}} \mathfrak{m} ) }{ {\bf{N}} \mathfrak{m}}
\sum\limits_{ {a, b \in \mathcal{O}_F / \mathcal{O}_F^{\times} \atop b \neq 0} \atop {\bf{N}} (\mathfrak{m}^2 (f_A(a,b))) \leq (CZ)^{1+ \varepsilon} }  
\frac{ \overline{ \lambda_{\chi}(f_A(a,b))}  }{ {\bf{N}}(f_A(a,b))^{\frac{1}{2}}} 
V_2 \left( \frac{ {\bf{N}} (\mathfrak{m}^2 (f_A(a,b)))}{Z C }\right). \end{aligned}\end{equation} 
Again, we write $\lambda_{\chi}(\mathfrak{n}) := \lambda(\mathfrak{n}) \chi( {\bf{N}} \mathfrak{n})$ for $\mathfrak{n} \subset \mathcal{O}_F$.
Note that these sums are only defined for certain choices of unbalancing parameter $Z >0$, and in particular only need to be considered 
in the event that there are contributions $a \in \mathcal{O}_F$ and 
$b \neq 0 \in \mathcal{O}_F$ in the region of moderate decay for the corresponding function $V_j$. 
 
Let us for simplicity write $\xi = \otimes_v \xi_v$ to denote the idele class character of $F$ 
determined by composition with the norm $\chi \circ {\bf{N}}$ with our chosen Dirichlet character $\chi \bmod p^{\beta}$. 

\begin{theorem}\label{RCOD} 

Let $\pi$ be any cuspidal $\operatorname{GL}_2({\bf{A}}_F)$-automorphic form of level $c(\pi)$ and central character $\omega$. 
Let $K/F$ a totally imaginary quadratic extension of absolute discriminant $D_K = {\bf{N}} \mathfrak{D}$, 
and $\mathfrak{p} \subset \mathcal{O}_F$ a fixed prime ideal with underlying rational prime $p$. 
Assume that $(c(\pi), \mathfrak{D} \mathfrak{p}) = (\mathfrak{p}, \mathfrak{D}) = 1$. 
Recall we fix integers $\alpha \geq 0$ and $\beta \geq 0$ corresponding to 
$\mathcal{W} = \rho \chi \circ {\bf{N}}$ a Hecke character of $K$ with primitive ring class component 
$\rho$ of conductor $\mathfrak{p}^{\alpha}$ and cyclotomic component $\xi = \chi \circ {\bf{N}}$ 
induced from a primitive even Dirichlet character $\chi \bmod p^{\beta}$.
We have the following estimates for the sums $D_{A, j}^{\dagger}(\pi, \chi; Z)$ and $D_{A, j}(\pi, \chi; Z)$ 
for $j=1,2$, for any class $A \in C(\alpha)$ in the class group of the order 
$\mathcal{O}_{\mathfrak{p}^{\alpha}} = \mathcal{O}_F + \mathfrak{p}^{\alpha} \mathcal{O}_K$. 
Again, we shall fix a quadratic form class representative $f_A(x, y) = a_A x^2 + b_A xy +c_A y^2$ for each class $A \in C(\alpha)$. \\
  
\noindent (i) Suppose $\pi \otimes \xi$ is non-dihedral 
(i.e.~not induced from a Hecke character of a quadratic extension of $F$) if $\xi \neq {\bf{1}}$ is nontrivial. 
Fix a class $A \in C(\alpha)$, and let $f_A(x, y) = a_A x^2 + b_A xy + c_A y^2$ be the reduced class form representative. 
Assume that $a_A=1$ and $b_A=0$,
as is the case when $A$ is principal and $D_K \equiv 0 \bmod 4$. 
We have for any choice of real numbers $Y \geq 1$ and $\varepsilon >0$ the upper bounds 
\begin{align*} D_{A, 1}^{\dagger}(\pi, \chi; Y^{-1}) &\ll_{\pi, \chi, \varepsilon} 
Y^{\frac{1}{4} + \delta_0 + \varepsilon} \cdot {\bf{N}} c_A^{- \frac{1}{2}} \end{align*} and 
\begin{align*} D_{A, 2}^{\dagger}(\pi, \chi; Y^{-1}) &\ll_{ \pi, \chi, \varepsilon} 
\left( \frac{ {\bf{N}}(\mathfrak{D}^2 c(\pi_K)) p^{4d \max (\alpha, \beta)}  }{Y} \right)^{\frac{1}{4} + \delta_0 + \varepsilon} \cdot {\bf{N}} c_A^{- \frac{1}{2}}. \\ \end{align*}
Here again, $0 < \delta_0 < 1/4$ denotes the best approximation to the generalized Lindel\"of hypothesis for 
$\operatorname{GL}_2({\bf{A}}_F)$-automorphic forms in the level aspect, and hence we can take $\delta_0 = 103/512$ 
by \cite[Corollary 1]{BH10}, using the approximation $\theta_0 = 7/64$ to the generalized Ramanujan conjecture
for $\operatorname{GL}_2({\bf{A}}_F)$ given in \cite{BBRP}. We can therefore take $\theta_0 = 7/64$ and $\delta_0 = 103/512$ in 
these statements to obtain unconditional estimates with exponents $1/4 - (1-2\theta_0)/16 = 206/1024$ and $1/4 + \delta_0 = 231/512$.
In particular, if $A = {\bf{1}} \in C(\alpha)$ is the principal class, then  
$D_{\bf{1}}(\pi, \chi) := D_{{\bf{1}}, 1}(\pi, \chi; Y^{-1}) + \epsilon \cdot D_{{\bf{1}}, 2}(\pi, \chi; Y^{-1})$
converges with $\alpha \rightarrow \infty$ to the constant 
\begin{align}\label{nvres}\frac{1}{w_K} \left( L(1, \omega \eta \chi^2 \circ {\bf{N}}) 
\cdot \frac{ L_{\bf{1}}^{\star}(1, \operatorname{Sym}^2 \pi \otimes \chi \circ {\bf{N}}) }{ L_{\bf{1}}^{\star}(2, \omega \chi^2 \circ {\bf{N}})} 
+ \epsilon \cdot \frac{\widetilde{L}_{\infty}(\frac{1}{2})}{L_{\infty}(\frac{1}{2})} \cdot L(1, \overline{\omega} \eta \overline{\chi}^2 \circ {\bf{N}}) 
\cdot \frac{ L_{\bf{1}}^{\star}(1, \operatorname{Sym}^2 \widetilde{\pi} \otimes \overline{\chi} \circ {\bf{N}}) }
{ L_{\bf{1}}^{\star}(2, \overline{\omega} \overline{\chi}^2 \circ {\bf{N}})} \right).\end{align} 
Here, the sum $(\ref{nvres})$ is seen by inspection to be nonvanishing (using positivity of the $L$-values) 
in the special case where $\pi \cong \widetilde{\pi}$ is self-dual and $\chi = {\bf{1}}$ is the principal Dirichlet character.
In general, so long as $\pi \cong \widetilde{\pi}$ is self-dual, then we can also show that this sum of 
residual terms $(\ref{nvres})$ with $\chi \neq {\bf{1}}$ nontrivial is nonvanishing. \\ 

\noindent (ii) If in the setup of (i) above we do not impose any condition on the absolute discriminant $D_K$
or the coefficients $a_A$, $b_A$, and $c_A$ of the reduced quadratic form class representative $f_A(x,y) = a_A x^2 + b_A xy + c_A y^2$,
then we can derive for any choices of real numbers $Y \geq 1$ and $\varepsilon >0$ the stronger estimates 
\begin{align*} D_{A, 1}^{\dagger}(\pi, \chi; Y^{-1}) &\ll_{\pi, \chi, \varepsilon} 
 {\bf{N}} a_A \cdot Y^{\frac{1}{4} + \delta_0} \cdot {\bf{N}}(\mathfrak{p}^{2 \alpha} \mathfrak{D})^{\delta_0 - \frac{\theta_0}{2} - \varepsilon} 
\cdot {\bf{N}}c_A^{- \frac{1}{2} - \delta_0 + \frac{\theta_0}{2} + \varepsilon} \end{align*} and 
\begin{align*} D_{A, 2}^{\dagger}(\pi, \chi; Y^{-1}) &\ll_{ \pi \otimes \xi, \varepsilon} 
 {\bf{N}} a_A \cdot Y^{\frac{1}{4} + \delta_0} \cdot {\bf{N}}(\mathfrak{p}^{2 \alpha} \mathfrak{D})^{\delta_0 - \frac{\theta_0}{2} - \varepsilon} 
\cdot {\bf{N}}c_A^{- \frac{1}{2} - \delta_0 + \frac{\theta_0}{2} + \varepsilon}. \end{align*} 
Let us remark that while these bounds appear on the surface to be uniform in $a_A$, 
they are weaker for ${\bf{N}} a_A >1$ as the constraints on the quadratic form $f_A(x, y)$
then force the quantity ${\bf{N}} c_A$ to be strictly smaller, i.e.~so that we detect less cancellation in the corresponding shifted convolution sums. \end{theorem} 

\begin{proof} 

To show (i), let $V$ be any smooth function of compact support on $[1/2, 1]$ satisfying $V^{(i)} \ll 1$ for all $i \geq 0$. 
Given $R \geq 1$ any real number and $c_A \in \mathcal{O}_F$ any nonzero $F$-integer, we have that 
\begin{equation*}\begin{aligned}
\bigg| &\sum_{\mathfrak{m} \subset \mathcal{O}_F} \frac{\eta \omega (\mathfrak{m}) \chi^2({\bf{N}} \mathfrak{m})}{ {\bf{N}}\mathfrak{m} } 
\sum\limits_{a, b \in \mathcal{O}_F/\mathcal{O}_F^{\times} \atop b \neq 0} \frac{\lambda_{\chi}(a^2 + c_A b^2)}{{\bf{N}}(a^2 + c_A b^2)^{\frac{1}{2}} }
V \left( \frac{ {\bf{N}}\mathfrak{m}^2(a^2 + c_A b^2) }{R} \right) \bigg| \\
&\leq \sum_{ \mathfrak{m} \subset \mathcal{O}_F } \frac{1}{ {\bf{N}} \mathfrak{m} } 
\sum_{b \neq 0 \in \mathcal{O}_F/ \mathcal{O}_F^{\times} } \bigg| 
\sum_{a \in \mathcal{O}_F/\mathcal{O}_F^{\times}} \frac{\lambda_{\chi}(a^2 + c_A b^2)}{{\bf{N}}(a^2 + c_A b^2)^{\frac{1}{2}}}  
V \left( \frac{ {\bf{N}} \mathfrak{m}^2(a^2 + c_A b^2)}{ R } \right) \bigg|, \end{aligned}\end{equation*}
which after applying Theorem \ref{SCS} to the inner sum is bounded above for any choice of $\varepsilon >0$ by 
\begin{equation*}\begin{aligned} 
&\ll \sum_{b \in \mathcal{O}_F/\mathcal{O}_F^{\times} \atop {\bf{N}} b \leq (R/{\bf{N}}c_A)^{\frac{1}{2}}  } R^{- \frac{1}{4} + \frac{\theta_0}{2} + \varepsilon}
{\bf{N}}(c_A b^2)^{\delta_0 - \frac{\theta_0}{2}} \\
&\ll ({\bf{N}}c_A)^{\delta_0 - \frac{\theta_0}{2}} R^{-\frac{1}{4} + \frac{\theta_0}{2} + \varepsilon}
\left( \frac{R}{{\bf{N}}c_A} \right)^{\frac{1}{2} + \delta_0 - \frac{\theta_0}{2}}
= R^{\frac{1}{4} + \delta_0 + \varepsilon} ({\bf{N}} c_A)^{- \frac{1}{2}}. \end{aligned}\end{equation*}
In particular, using a smooth partition of unity and the rapid decay of the cutoff functions 
$V_j$ ($j=1,2$) in $(\ref{ts1})$ and $(\ref{ts2})$, we deduce the claimed bounds. 

Let us now consider the sum of two residual terms $(\ref{nvres})$. 
Note that if the primitive Dirichlet character $\chi = {\bf{1}}$ is principal and $\pi \cong \widetilde{\pi}$ is self-dual, 
then $\widetilde{L}_{\infty}(\frac{1}{2})/L_{\infty}(\frac{1}{2})=1$. The sum $(\ref{nvres})$ is then the same as 
\begin{align*}\frac{2}{w_K} \cdot L(1, \omega \eta) \cdot \frac{L_{\bf{1}}^{\star}(1, \operatorname{Sym}^2 \pi)}{ L_{\bf{1}}^{\star}(2, \omega) }. \end{align*}
We deduce single residual term is nonvanishing by a well-known argument (see e.g.~\cite[Lemma]{CM}) which establishes a lower bound
for the contribution of the (partial) symmetric square $L$-function. In general, $\widetilde{L}_{\infty}(\frac{1}{2})/L_{\infty}(\frac{1}{2}) \neq 1$, 
and if we assume otherwise that $(\ref{nvres})$ vanishes identically, then we would have to have  
\begin{align}\label{RAA} L(1, \omega \eta \chi^2 \circ {\bf{N}}) 
\cdot \frac{ L_{\bf{1}}^{\star}(1, \operatorname{Sym}^2 \pi \otimes \chi \circ {\bf{N}}) }{ L_{\bf{1}}^{\star}(2, \omega \chi^2 \circ {\bf{N}})}  
&= \left( -\epsilon \cdot \frac{\widetilde{L}_{\infty}(\frac{1}{2})}{L_{\infty}(\frac{1}{2})} \right) \cdot L(1, \overline{\omega} \eta \overline{\chi}^2 \circ {\bf{N}}) 
\cdot \frac{ L_{\bf{1}}^{\star}(1, \operatorname{Sym}^2 \widetilde{\pi} \otimes \overline{\chi} \circ {\bf{N}}) }
{ L_{\bf{1}}^{\star}(2, \overline{\omega} \overline{\chi}^2 \circ {\bf{N}})}, \end{align} 
i.e.~where the $-\epsilon \cdot \widetilde{L}_{\infty}(\frac{1}{2})/L_{\infty}(\frac{1}{2})$ 
term is a constant independent of the symmetric square $L$-values in question. 
To rule out this possibility $(\ref{RAA})$, we argue as follows. Let us for for any Dirichlet character $\chi  \bmod p^{\beta}$ write
\begin{align*} \mathfrak{L}_{\chi} (1) &= L(1, \omega \eta \chi^2 \circ {\bf{N}}) 
\cdot \frac{ L_{\bf{1}}^{\star}(1, \operatorname{Sym}^2 \pi \otimes \chi \circ {\bf{N}}) }{ L_{\bf{1}}^{\star}(2, \omega \chi^2 \circ {\bf{N}})},  \quad 
\mathfrak{L}_{\overline{\chi}}(1) = L(1, \overline{\omega} \eta \overline{\chi}^2 \circ {\bf{N}}) \cdot 
\frac{ L_{\bf{1}}^{\star}(1, \operatorname{Sym}^2 \widetilde{\pi} \otimes \overline{\chi} \circ {\bf{N}}) }
{ L_{\bf{1}}^{\star}(2, \overline{\omega} \overline{\chi}^2 \circ {\bf{N}})}, \end{align*} 
and $\epsilon' = \epsilon \cdot \widetilde{L}_{\infty}(\frac{1}{2})/L_{\infty}(\frac{1}{2})$ to 
simplify notations\footnote{Note that since we assume $\pi \cong \widetilde{\pi}$ is self-dual, 
we again have that $\widetilde{L}_{\infty}(1/2) = L_{\infty}(1/2)$, and hence can ignore this quotient of archimedean factors.
However, we include these harmless extra factors to reveal the structure of the argument in the general setting, e.g.~as it may be useful elsewhere.}. 
Observe that $(\ref{RAA})$ implies we have the relation
\begin{align}\label{RAA2} \frac{L_{\infty}(\frac{1}{2})}{\widetilde{L}_{\infty}(\frac{1}{2})}
\cdot \frac{ \mathfrak{L}_{\chi}(1)}{ \mathfrak{L}_{\overline{\chi}}(1) } &= - \epsilon(\chi), \end{align} 
where
\begin{align*} \epsilon(\chi) &:= \omega(\operatorname{lcm}(\mathfrak{p}^{\alpha}, p^{\beta} \mathcal{O}_F)) \cdot \eta(p^{4 \beta} \mathfrak{d}c(\pi)) \cdot 
\epsilon(1/2, \pi) \cdot \chi( {\bf{N}}(\mathfrak{d}^2 c(\pi)^2 \mathfrak{D}^8)) 
\cdot \left( \frac{\tau(\chi^2)}{p^{\frac{\beta}{2}}} \right)^{4d} \in \overline{\bf{Q}} \end{align*}
denotes the root number $\epsilon(1/2, \pi \times \rho \chi \circ {\bf{N}})$.
Since this root number $\epsilon(\chi)$ determines an algebraic number, 
and moreover factors through\footnote{In general, it factors through the compositum ${\bf{Q}}(\pi, \chi) = {\bf{Q}}(\pi) {\bf{Q}}(\pi)$
of the Hecke field ${\bf{Q}}(\pi)$ of $\pi$ and the cyclotomic field ${\bf{Q}}(\chi)$. The same argument is then
made via automorphisms of ${\bf{Q}}(\pi, \chi)$ fixing the Hecke field ${\bf{Q}}(\pi)$.} the 
cyclotomic field ${\bf{Q}}(\chi)$ obtained by adjoining the values of $\chi$ (since $\pi \cong \widetilde{\pi}$), 
we can act on the algebraic values in $(\ref{RAA2})$. This gives us for all $\sigma \in \operatorname{Aut}({\bf{Q}}(\chi)/{\bf{Q}})$ the relation  
\begin{align*}\frac{L_{\infty}(\frac{1}{2})}{\widetilde{L}_{\infty}(\frac{1}{2})}
\cdot \frac{ \mathfrak{L}_{\chi^{\sigma}}(1)}{ \mathfrak{L}_{\overline{\chi}^{\sigma}}(1) } 
&= - \epsilon(\chi^{\sigma}) \quad \forall ~~\sigma \in  \operatorname{Aut}({\bf{Q}}(\chi)/{\bf{Q}}). \end{align*} 
In particular, the relation $(\ref{RAA})$ would have to hold for all of the $\varphi(p^{\beta}) - \varphi(p^{\beta-1})$ 
many characters in this orbit corresponding to Dirichlet characters $\chi$ of exact order $p^{\beta}$, 
which is to say for {\it{each}} primitive Dirichlet character $\chi \bmod p^{\beta}$. But, this relation clearly fails
for the (primitive) principal character $\chi = {\bf{1}} \bmod p^{\beta}$, giving us the desired contradiction. 
That is, we deduce that the sum of residual terms $(\ref{nvres})$ for the primitive average is nonvanishing in this way. 

To show (ii), we argue in the same way as for (i), with Theorem \ref{SCS2} replacing Theorem \ref{SCS}.
To be more precise, we consider for each nonzero $F$-integer $b \in \mathcal{O}_F$ the quadratic polynomial 
\begin{align*} q_{A, b}(x) := f_A(x, b) = a_A x^2 + b_A b x + c_A b^2 \end{align*}
with discriminant $\Delta_b = (b_A b)^2 - 4 a_A c_A b^2 = b^2(b_A^2 - 4 a_A c_A) = b^2 \Delta$, 
i.e.~where we write $\Delta = \operatorname{disc}(f_A) = \mathfrak{p}^{2 \alpha} \mathfrak{D}$. 
We then have for any smooth function $V$ with support on $[1/2, 1]$ satisfying $V^{(i)} \ll 1$ for all $i \geq 1$
and any real parameter $R \geq 1$ that 
\begin{equation*}\begin{aligned} &\left\vert \sum\limits_{\mathfrak{m} \subset \mathcal{O}_F} 
\frac{  \eta \omega(\mathfrak{m}) \chi^2({\bf{N}} \mathfrak{m})}{ {\bf{N}}\mathfrak{m} } 
\sum\limits_{a, b \in \mathcal{O}_F/ \mathcal{O}_F^{\times} \atop b \neq 0} 
\frac{\lambda_{\chi}(q_{A, b}(a))}{ {\bf{N}} q_{A, b}(a)^{\frac{1}{2}}}
V \left( \frac{ {\bf{N}}\mathfrak{m}^2 {\bf{N}}q_{A, b}(a) }{R} \right) \right\vert \\
&\ll  \sum\limits_{\mathfrak{m} \subset \mathcal{O}_F} \frac{1}{ {\bf{N}}\mathfrak{m} } 
\sum\limits_{b \in \mathcal{O}_F/\mathcal{O}_F^{\times}}
\left\vert \sum\limits_{a \in \mathcal{O}_F/\mathcal{O}_F^{\times} }
\frac{\lambda_{\chi}(q_{A, b}(a))}{ {\bf{N}} q_{A, b}(a)^{\frac{1}{2}}}
V \left( \frac{ {\bf{N}}\mathfrak{m}^2 {\bf{N}}q_{A, b}(a) }{R} \right) \right\vert. \end{aligned} \end{equation*}
Applying Theorem \ref{SCS2} to each of the inner $a$-sums, we then obtain for any $\varepsilon >0$ the bound
\begin{equation}\begin{aligned}\label{SCS2bounds}
&\ll_{\pi, \varepsilon} \sum\limits_{ b \neq 0 \in \mathcal{O}_F/\mathcal{O}_F^{\times} \atop {\bf{N}}b \leq \left( \frac{R}{ {\bf{N}}c_A } \right)^{\frac{1}{2}}  }
{\bf{N}} a_A \cdot R^{- \frac{1}{4} + \frac{\theta_0}{2} + \varepsilon} \cdot {\bf{N}}\Delta_b^{\delta_0 - \frac{\theta_0}{2} - \varepsilon} \\
&\ll {\bf{N}} a_A \cdot R^{- \frac{1}{4} + \frac{\theta_0}{2} + \varepsilon} \cdot {\bf{N}} \Delta^{\delta_0 - \frac{\theta_0}{2} - \varepsilon}
\sum\limits_{ {\bf{N}}b \leq \left( \frac{R}{ {\bf{N}}c_A } \right)^{\frac{1}{2}} } {\bf{N}} b^{2 \left( \delta_0 - \frac{\theta_0}{2} - \varepsilon \right)} \\
&\ll {\bf{N}} a_A \cdot R^{- \frac{1}{4} + \frac{\theta_0}{2} + \varepsilon} \cdot {\bf{N}} \Delta^{\delta_0 - \frac{\theta_0}{2} - \varepsilon}
\cdot \left( \frac{ R  }{ {\bf{N}}c_A } \right)^{\frac{1}{2} + \delta_0 - \frac{\theta_0}{2} - \varepsilon} 
= {\bf{N}} a_A \cdot R^{ \frac{1}{4} + \delta_0} \cdot {\bf{N}}(\mathfrak{p}^{2 \alpha} \mathfrak{D})^{\delta_0 - \frac{\theta_0}{2} - \varepsilon} 
\cdot {\bf{N}}c_A^{- \frac{1}{2} - \delta_0 + \frac{\theta_0}{2} + \varepsilon}. \end{aligned}\end{equation}
The claimed bound again follows after taking a standard partition of unity and dyadic decomposition for the 
corresponding off-diagonal sum, taking a sum over $\log Y$ many ranges $R \geq 1$ of these bounds $(\ref{SCS2bounds})$. \end{proof} 

\begin{corollary}\label{RCGAnv} 

Suppose the generic root number $\epsilon$ is not $-1$ in the special case 
where $\pi \otimes \xi = \pi$ is self-dual (hence with $\xi = \chi \circ {\bf{N}}$ trivial). We have the following estimates: \\

\noindent (i) Assume $\pi \otimes \xi$ is non-dihedral (i.e. $\pi$ is non-dihedral if $\xi$ is trivial). 
Taking $Y = {\bf{N}}(\mathfrak{D}c(\pi_K)^{\frac{1}{2}}) p^{2d \max(\alpha, \beta)}$ to be the square root of the conductor, 
we have have the following estimate for the average $P_{\alpha}(\pi, \chi)$ 
over primitive ring class characters of conductor $\mathfrak{p}^{\alpha}$:
\begin{equation}\begin{aligned}\label{primitive1} &P_{\alpha}(\pi, \chi) = 
\left(1 -2 \cdot \frac{\#C(\alpha-1)}{\#C^{\star}(\alpha)} \right) \\ &\times \frac{1}{w_K} 
\left( L(1, \omega \eta \chi^2 \circ {\bf{N}}) \cdot \frac{ L_{\bf{1}}^{\star}(1, \operatorname{Sym}^2 \pi \otimes \chi \circ {\bf{N}}) }{ L_{\bf{1}}^{\star}(2, \omega \chi^2 \circ {\bf{N}})} 
+ \epsilon \cdot \frac{\widetilde{L}_{\infty}(\frac{1}{2})}{L_{\infty}(\frac{1}{2})} \cdot L(1, \overline{\omega} \eta \overline{\chi}^2 \circ {\bf{N}}) 
\cdot \frac{ L_{\bf{1}}^{\star}(1, \operatorname{Sym}^2 \widetilde{\pi} \otimes \overline{\chi} \circ {\bf{N}}) }{ L_{\bf{1}}^{\star}(2, \overline{\omega} \overline{\chi}^2 \circ {\bf{N}})} \right) \\
&+ O_{\pi, \chi, \varepsilon} \left( Y^{\frac{1}{4} + \delta_0 + \varepsilon} {\bf{N}}(\mathfrak{D} \mathfrak{p}^{2 \alpha})^{-\frac{1}{2}} \right). \end{aligned}\end{equation}
Hence (by the discussion above for $(\ref{nvres})$), the primitive average $P_{\alpha}(\pi, \chi)$ converges to a nonzero constant as $\alpha \rightarrow \infty$
so long as we assume $\pi \cong \widetilde{\pi}$ is self-dual. Consequently,
for each sufficiently large $\alpha \geq 1$, there exists a primitive ring class character $\rho$ of conductor $\mathfrak{p}^{\alpha}$ for which
$L(1/2, \pi \times \rho \chi \circ {\bf{N}}) \neq 0$. \\

\noindent (ii) Assume again that $\pi \otimes \xi$ is non-dihedral. Then, the subaverage $P_{\alpha, \rho}(\pi, \chi)$ over 
primitive characters $\rho \in C(\alpha)$ with restriction to the torsion subgroup $C_0 = C(\infty)_{\operatorname{tors}}$
given by some $\rho_0$ can be estimated as follows. Taking $Y = {\bf{N}}(\mathfrak{D}c(\pi_K)^{\frac{1}{2}}) p^{2d \max(\alpha, \beta)}$ 
again to be the square root of the conductor, and using the residues notation defined in $(\ref{residue})$, we have 
we have for any $\varepsilon >0$ that 
\begin{align*} &P_{\alpha, \rho_0}(\pi, \chi) \\ = 
&\sum_{A \in C_0(\alpha)} \rho_0(A) \left( R(\pi, \chi, a_A) 
+ \epsilon \cdot \frac{\widetilde{L}_{\infty}(\frac{1}{2})}{ L_{\infty}(\frac{1}{2}) } \cdot R(\widetilde{\pi}, \overline{\chi}, a_A) 
+ O_{\pi, \chi, \varepsilon} \left( {\bf{N}} a_A \cdot Y^{\frac{1}{4} + \delta_0} \cdot 
\frac{ {\bf{N}}(\mathfrak{p}^{2 \alpha} \mathfrak{D})^{\delta_0 - \frac{\theta_0}{2} - \varepsilon}}{ {\bf{N}}(c_A)^{ \frac{1}{2} + \delta_0 - \frac{\theta_0}{2} - \varepsilon} } \right) \right) \\
&- \frac{\# \overline{C}(\alpha-1)}{\# P(\alpha, \rho_0)} \sum_{A \in C_0(\alpha-1) \atop A \notin C_0(\alpha)} \rho_0(A) \left( R(\pi, \chi, a_A) 
+ \epsilon \cdot \frac{\widetilde{L}_{\infty}(\frac{1}{2})}{ L_{\infty}(\frac{1}{2}) } \cdot R(\widetilde{\pi}, \overline{\chi}, a_A) 
+  O_{\pi, \chi, \varepsilon} \left( {\bf{N}} a_A \cdot Y^{ \frac{1}{4} + \delta_0} \cdot 
\frac{ {\bf{N}}(\mathfrak{p}^{2 \alpha} \mathfrak{D})^{\delta_0 - \frac{\theta_0}{2} - \varepsilon}}{ {\bf{N}}(c_A)^{ \frac{1}{2} + \delta_0 - \frac{\theta_0}{2} - \varepsilon} } \right) \right). \end{align*}
In particular, we argue that the average converges with $\alpha \rightarrow \infty$ to the constant 
\begin{equation}\begin{aligned}\label{tameconstant}
&\sum_{A \in C_0(\alpha)} \rho_0(A) \left( R(\pi, \chi, a_A) 
+ \epsilon \cdot \frac{\widetilde{L}_{\infty}(\frac{1}{2})}{ L_{\infty}(\frac{1}{2}) } \cdot R(\widetilde{\pi}, \overline{\chi}, a_A) \right) \\
&- \frac{\# \overline{C}(\alpha-1)}{\# P(\alpha, \rho_0)} \sum_{A \in C_0(\alpha-1) \atop A \notin C_0(\alpha)} \rho_0(A) \left( R(\pi, \chi, a_A) 
+ \epsilon \cdot \frac{\widetilde{L}_{\infty}(\frac{1}{2})}{ L_{\infty}(\frac{1}{2}) } \cdot R(\widetilde{\pi}, \overline{\chi}, a_A) \right), \end{aligned}\end{equation}
and moreover that this constant does not vanish if $\pi \cong \widetilde{\pi}$ is assumed to be self-dual. \\

\noindent (iii) Assume again that $\pi \otimes \xi$ is non-dihedral. 
Taking $Y = {\bf{N}}(\mathfrak{D}c(\pi_K)^{\frac{1}{2}}) p^{2d \max(\alpha, \beta)}$ again to be the square root of the conductor,
the Galois subaverage $G_{\alpha}(\pi, \chi; x)$ can be estimated in a similar way for any $\varepsilon >0$ as 
\begin{align*} &G_{\alpha}(\pi, \chi; x) = \sum_{A \in C(\alpha)^{p^x}} \left( R(\pi, \chi, a_A) 
+ \epsilon \cdot \frac{\widetilde{L}_{\infty}(\frac{1}{2})}{ L_{\infty}(\frac{1}{2}) } \cdot R(\widetilde{\pi}, \overline{\chi}, a_A) 
 O_{\pi, \chi, \varepsilon} \left( {\bf{N}} a_A \cdot Y^{ \frac{1}{4} + \delta_0} \cdot 
\frac{ {\bf{N}}(\mathfrak{p}^{2 \alpha} \mathfrak{D})^{\delta_0 - \frac{\theta_0}{2} - \varepsilon}}{ {\bf{N}}(c_A)^{ \frac{1}{2} + \delta_0 - \frac{\theta_0}{2} - \varepsilon} }  \right) \right) \\
&- \frac{\# \overline{C}(\alpha, x-1)}{\# C^{\star}(\alpha, x)} \sum_{A \in C(\alpha)^{p^{x-1}} \atop A \notin C(\alpha)^{p^x}} \left( R(\pi, \chi, a_A) 
+ \epsilon \cdot \frac{\widetilde{L}_{\infty}(\frac{1}{2})}{ L_{\infty}(\frac{1}{2}) } \cdot R(\widetilde{\pi}, \overline{\chi}, a_A) 
 O_{\pi, \chi, \varepsilon} \left( {\bf{N}} a_A \cdot Y^{\frac{1}{4} + \delta_0} \cdot 
\frac{ {\bf{N}}(\mathfrak{p}^{2 \alpha} \mathfrak{D})^{\delta_0 - \frac{\theta_0}{2} - \varepsilon}}{ {\bf{N}}(c_A)^{ \frac{1}{2} + \delta_0 - \frac{\theta_0}{2} - \varepsilon} } \right) \right). \end{align*}
In particular, if ${\bf{N}} c_A \gg {\bf{N}} a_A$ is sufficiently large relative to the discriminant ${\bf{N}}(\mathfrak{p}^{2 \alpha} \mathfrak{D})$ for each class $A$ in the sum, 
then this Galois average converges with the ring class exponent $\alpha \rightarrow \infty$ to the constant 
\begin{equation*}\begin{aligned}
&\sum_{A \in C(\alpha)^{p^x}} \left( R(\pi, \chi, a_A) 
+ \epsilon \cdot \frac{\widetilde{L}_{\infty}(\frac{1}{2})}{ L_{\infty}(\frac{1}{2}) } \cdot R(\widetilde{\pi}, \overline{\chi}, a_A) \right) \\
&- \frac{\# \overline{C}(\alpha, x-1)}{\# C^{\star}(\alpha, x)} 
\sum_{A \in C(\alpha)^{p^{x-1}} \atop A \notin C(\alpha)^{p^x}} \left( R(\pi, \chi, a_A) 
+ \epsilon \cdot \frac{\widetilde{L}_{\infty}(\frac{1}{2})}{ L_{\infty}(\frac{1}{2}) } \cdot R(\widetilde{\pi}, \overline{\chi}, a_A) \right), \end{aligned}\end{equation*}
and this constant does not vanish if $\pi \cong \widetilde{\pi}$ is assumed to be self-dual. 

\end{corollary}

\begin{proof} 

Taking for granted Lemma \ref{RCGA} and the residual estimates from Proposition \ref{RCres}, 
which also apply in a natural way to the sums $D_{{\bf{1}}, j}^{\star}(\pi, \chi; Z)$, 
the first claim (i) follows directly from Theorem \ref{RCOD} (i) if we assume $D_K \equiv 0 \bmod 4$, 
and more generally from Theorem \ref{RCOD} if we do not make this assumption.

To deduce the stated estimate for (ii), we put together the estimates of Proposition \ref{RCres} and Theorem \ref{RCOD} (ii)
in our average formula. We then argue that for sufficiently large ring class exponent $\alpha \gg 1$, the first coefficients $a_A$
of each contributing class $A \in C_0(\alpha) \cong C_0(\alpha-1) \cong C_0$ will be bounded independently of $\alpha$. 
Since we take the chosen quadratic form representative $f_A(x, y) = a_A x^2 + b_A xy + c_A y^2$ to be reduced, 
hence with ${\bf{N}} b_A \leq {\bf{N}} a_A \leq {\bf{N}} c_A$, we deduce that the middle coefficient $b_A$ is also 
bounded independently of the ring class exponent $\alpha$. Here, we can also use the unconditional approximation
$\delta_0 = 103/512$ of Blomr-Harcos \cite{BH10} (via the approximation $\theta_0 = 7/64$ of Blomer-Brumley \cite{BBRP}).
In this way, we deduce that the average converges with $\alpha \rightarrow \infty$
to the constant term $(\ref{tameconstant})$. To derive the claimed nonvanishing of this term, let us write the residue as 
\begin{align*} R(\pi, \chi, a_A) &= \sum_{q \mid a_A} \mathcal{L}_{\chi, q}(1), \quad \mathcal{L}_{\chi, q}(1) = 
\left( \frac{\mu(q) \lambda^{(q)}(a_A) \omega(q) \chi ({\bf{N}}a_A ) }{ {\bf{N}}a_A^{\frac{1}{2}} } \right) 
\cdot \frac{  L^{\star}_{{\bf{1}}, q}(1, \operatorname{Sym}^2 \pi^{(q)} \otimes \chi \circ {\bf{N}})}{ L^{\star}_{{\bf{1}}}(2, \omega \chi^2 \circ {\bf{N}}) }. \end{align*}
We argue by inspection of the Dirichlet series defining each $L^{\star}_{{\bf{1}}, q}(1, \operatorname{Sym}^2 \pi^{(q)} \otimes \chi \circ {\bf{N}})$ in relation to the full series 
$L^{\star}_{{\bf{1}}, 1}(1, \operatorname{Sym}^2 \pi^{(q)} \otimes \chi \circ {\bf{N}}) = L^{\star}_{{\bf{1}}}(1, \operatorname{Sym}^2 \pi^{(q)} \otimes \chi \circ {\bf{N}})$
that each of the summands $\mathcal{L}_{\chi}(1)$ is nonvanishing. A more intrinsic way to see this is that the standard contour arguments 
used to derive the corresponding statement for $L(1, \operatorname{Sym^2} \pi \otimes \chi)$, 
as given for instance in \cite[Lemma 4.2, cf.~Lemma 4.1]{CM}, can be applied directly to each of these partial Dirichlet series. 
We can also deduce by the argument given in Theorem \ref{RCOD} (i) that for each divisor $q \mid a_A$, the corresponding sum of residual terms
\begin{align*} \mathfrak{L}_{\chi, q}(1) &:= \mathcal{L}_{\chi, q}(1) 
+ \epsilon \cdot \frac{\widetilde{L}_{\infty}(\frac{1}{2})}{L_{\infty}(\frac{1}{2})} \cdot \mathcal{L}_{\overline{\chi}, q}
=  \mathcal{L}_{\chi, q}(1) + \epsilon \cdot \mathcal{L}_{\overline{\chi}, q}(1) \end{align*} does not vanish. It is then
easy to check that the sum $(\ref{tameconstant})$ is nonvanishing. This is simple to deduce what $\rho_0 = {\bf{1}}$
is the trivial character. Otherwise, using orthogonality, we pick up the first nonvanishing term in the first sum from $q \nmid \gcd(a_A)$
not dividing the mutual greatest common divisor of the coefficients $a_A$. 

The third claim (iii) is deduced in the same way as for (ii) from the average formula. 
\end{proof}

Let us conclude with a few remarks on the limitations of the method we use here to derive bounds, which is drawn out in Appendix A below.
In short, to derive bounds for the off-diagonal sums $D_{A, j}^{\dagger}(\pi, \chi; Z)$ we consider  
via spectral decompositions of shifted convolution sums, we need to derive integral presentations in terms
of Fourier coefficients of some distinct automorphic form. This imposes some constraints on the coefficients of the
reduced binary quadratic form class representative $f_A(x, y) = a_A x^2 + b_A xy  + c_A y^2$. 
In particular, it requires the first coefficient $a_A$ to be small relative to the last coefficient $c_A$. 
It is for this reason that we do not derive an unconditional nonvanishing estimate for the Galois subaverage $G_{\alpha}(\pi, \chi, x)$ 
directly\footnote{although we can use rationality theorems to deduce this from the previous estimate (ii)} -- 
we do not know the relative sizes of the coefficients $a_A$ and $c_A$ for the classes that contribute, 
and this remains and interesting open problem to consider. 
In general, without constraints on the coefficients of $f_A(x,y)$, we can prove the following result, 
in fact a special adaptation of the proof of Theorem \ref{SCS} for the sums we consider above.
However, since the choice of archimedean local vector in the Kirillov model does not seem to be admissible without introducing a smooth partition of unity
and dyadic decomposition, it cannot be used in a direct way to bound the sums we consider here suitably\footnote{This is because we need to consider
each length $R$ as in the proof of Theorem \ref{RCOD} (i), so have to include the contributions of small lengths $R$ of size close to one in the sum. 
At the same time, although we do not describe it here (see rather \cite{VO12}), it seems a slightly better bound can be derived in the classical setup over $F={\bf{Q}}$,
essentially as we can detect more cancellation thanks to a more explicit knowledge of the constant coefficients of the Eisenstein series 
appearing in the spectral decomposition.} -- a fact which is consistent with the cutoff functions $V_j$ having poles near zero. 

\begin{proposition}\label{binary} 

Let $\pi$ as above be any cuspidal automophic representation of $\operatorname{GL}_2({\bf{A}}_F)$ 
with unitary central character $\omega$. Let $\chi$ be any primitive Dirichlet character of conductor $p^{\beta}$ with 
$\xi = \chi \circ {\bf{N}}$ the corresponding idele class character of $F$, 
and $A \in C(\alpha)$ and ring class of conductor $\mathfrak{p}^{\alpha}$ with associated
reduced quadratic form class group representative $f_A(x, y) = a_A x^2 + b_A xy + c_A y^2$.
Again, we write $C =  {\bf{N}}(\mathfrak{D} ^2 c(\pi_K)) p^{4d \max(\alpha, \beta)} $ for simplicity to denote the conductor of the $L$-functions in the average. 
Let us take $V \in \mathcal{C}^{\infty}({\bf{R}}_{>0})$ to be any smooth function of rapid decay at infinity with bounded derivatives $V^{(i)} \ll 1$ for all $i \geq 0$.
More specifically, we assume that the function $\exp(2 \pi y) V(y)$ of $y \in {\bf{R}}_{>0}$ is square integrable, 
which will always be the case e.g.~if $V$ is compactly supported. Then for any choice of real number $Y >1$, the sum 
\begin{align*} \frac{1}{w_K} \sum_{\mathfrak{m} \subset \mathcal{O}_F } 
\frac{\overline{\omega} \eta(\mathfrak{m}) \overline{\chi}^2({\bf{N}} \mathfrak{m}) }{ {\bf{N}}\mathfrak{m}^2  }
\sum_{a, b \in \mathcal{O}_F / \mathcal{O}_F^{\times} \atop f_A(a, b) \neq 0} 
\frac{ \overline{ \lambda_{\pi \otimes \xi}(f_A(a, b))} }{ {\bf{N}}f_A(a, b)^{\frac{1}{2}}} 
V \left( \frac{ {\bf{N}} f_A(a, b)}{ Y } \right) \end{align*}
as well as the corresponding contragredient sum can be bounded above in modulus by the quantity 
\begin{align*} \ll_{\pi, \chi, \varepsilon} C^{\frac{1}{2} + \varepsilon} \cdot (D_K p^{d \beta}) \cdot Y^{-\frac{1}{2}}. \end{align*}

\end{proposition}

\begin{proof} Ignoring $\mathfrak{m}$-sums for simplicity, this can be deduced from the following argument, 
writing $V = V_j$ for $j=1, 2$ to denote the relevant cutoff function. 
We have by orthogonality of characters $\rho \in C(\alpha)^{\vee}$ that 
\begin{align*} \sum_{a, b \in \mathcal{O}_F/ \mathcal{O}_F^{\times} } \frac{\lambda_{\chi}(f_A(a,b))}{{\bf{N}}f_A(a, b)^{\frac{1}{2}}  }
V\left( \frac{ {\bf{N}}f_A(a, b)}{Y} \right) &= \frac{1}{\# C(\alpha)} \sum_{\rho \in C(\alpha)^{\vee}} \sum_{A \in C(\alpha)} \rho(A)
\sum_{\mathfrak{n} \subset \mathcal{O}_F} \frac{\lambda_{\chi}(\mathfrak{n}) r_A(\mathfrak{n})}{{\bf{N}} \mathfrak{n}^{\frac{1}{2}}} 
V \left( \frac{ {\bf{N}} \mathfrak{n}  }{Y} \right). \end{align*}
Estimating the $\rho$-sum trivially, this latter sum is bounded above in modulus by 
\begin{align*} \sum_{A \in C(\alpha)} \rho(A)
\sum_{\mathfrak{n} \subset \mathcal{O}_F} \frac{\lambda_{\chi}(\mathfrak{n}) r_A(\mathfrak{n})}{{\bf{N}} \mathfrak{n}^{\frac{1}{2}}} 
V \left( \frac{{\bf{N}}\mathfrak{n} }{Y} \right), \end{align*}
which after Mellin inversion is the same as 
\begin{align*} \int_{\Re(s) = 2} L(s+1/2, \pi \times \rho) \widehat{V}(s) \frac{ds}{2 \pi i}. \end{align*}
Shifting the contour leftward to $\Re(s) = -1/2$ and estimating the contribution of $L(s+1/2, \pi \times \rho)$
trivially by $C^{\frac{1}{2}}$ on the line $\Re(s) = 0$ recovers essentially the same bound. Let us also remark
that a similar bound can be obtained by replacing the metaplectic theta series $\theta_Q$ in the proof of Theorem \ref{SCS}
below with the binary theta series $\theta_{f_A}$ associated to the fixed quadratic form representative $f_A(x,y)$, i.e.~then decomposing 
the constant coefficient in the Fourier-Whittaker expansion of $\Phi_A = \phi \overline{\theta}_{f_A}$ for some suitable choice of pure tensor 
$\phi = \otimes_v \phi_v \in V_{\pi}$ spectrally in terms of the constant coefficient of $\operatorname{GL}_2({\bf{A}}_F)$ 
Eisenstein series\footnote{We omit the details of this alternative argument for brevity.}. \end{proof}
 
\subsection{Galois conjugate values} 

Assume now that $\pi_{\infty}$ is a holomorphic discrete series of weight $k = (k_j)_{j=1}^d$ with each $k_j \geq 2$, 
so that $\pi$ arises from a holomorphic cuspidal Hilbert modular eigenform of ``arithmetic weight $k \geq 2$",
The Hecke eigenvalues $\lambda(\mathfrak{n}) = \lambda_{\pi}(\mathfrak{n})$ are then known by a theorem of Shimura \cite{Sh} 
to be algebraic numbers. Writing $\langle \pi, \pi \rangle$ to denote the Petersson norm of $\pi$, 
another theorem of Shimura shows \cite{Sh} shows essentially that the values 
\begin{align*} \mathcal{L}(1/2, \pi \times \mathcal{W}) = \frac{L(1/2, \pi \times \mathcal{W})}{8 \pi^2 \langle \pi, \pi \rangle} \end{align*}
are algebraic numbers, and moreover acted upon in a natural way by automorphisms $\sigma \in \operatorname{Gal}(\overline{ {\bf{Q}} }/  {\bf{Q}})$. 
More precisely, writing ${\bf{Q}}(\pi)$ to denote the finite extension of ${\bf{Q}}$ obtained by 
adjoining the eigenvalues of $\pi$, and ${\bf{Q}}(\pi, \mathcal{W})$ the finite extension of ${\bf{Q}}(\pi)$ obtained by adjoining the 
values of $\mathcal{W}$, the values $\mathcal{L}(1/2, \pi \times \mathcal{W})$ lie in ${\bf{Q}}(\pi, \mathcal{W})$. 
These algebraic values are Galois conjugate in the sense that $\sigma \in \operatorname{Aut}({\bf{C}})$ 
acts on them via the rule $\sigma \left( \mathcal{L}(1/2, \pi \times \mathcal{W}) \right) = \mathcal{L}(1/2, \pi^{\sigma} \times \mathcal{W}^{\sigma})$. 
Here, $\pi^{\sigma}$ denotes the representation of $\operatorname{GL}_2({\bf{A}}_F)$ 
obtained from $\pi$ by applying $\sigma$ to its eigenvalues, and $\mathcal{W}^{\sigma}$ the character
defined on nonzero ideals $\mathfrak{a} \subset \mathcal{O}_K$ by the rule 
$\mathfrak{a} \mapsto \mathcal{W}(\mathfrak{a})^{\sigma}$. Restricting to embeddings $\sigma$ of ${\bf{Q}}(\pi, \mathcal{W})$ into ${\bf{C}}$ 
which fix ${\bf{Q}}(\pi)$, we obtain a Galois conjugate family of values $\mathcal{L}(1/2, \pi \times \mathcal{W})$, 
where the action fixes $\pi$ but varies over Galois conjugate characters $\mathcal{W}$. When ${\bf{Q}}(\pi)$ is linearly disjoint over
${\bf{Q}}$ to the cyclotomic extension ${\bf{Q}}(\mathcal{W})$ obtained by adjoining the values of $\mathcal{W}$, 
then the (well-defined) weighted average 
\begin{align*} G_{[\mathcal{W}]}(\pi) &:= \frac{1}{[{\bf{Q}}(\pi, \mathcal{W}): {\bf{Q}}(\pi)]} \sum_{ \sigma: {\bf{Q}}(\pi, \mathcal{W}) \rightarrow 
{\bf{C}} \atop \sigma( {\bf{Q}}(\pi)) = {\bf{Q}}(\pi)} L(1/2, \pi \times \mathcal{W}^{\sigma})\end{align*}
over all complex embeddings $\sigma: {\bf{Q}}(\pi, \mathcal{W}) \rightarrow {\bf{C}}$ which fix ${\bf{Q}}(\pi)$ consists of Galois conjugate 
values. It is easy to see from this that the sum defining $G_{[\mathcal{W}]}(\pi)$ vanishes if and only if each of the summands vanishes. 
This allows us to deduce the following direct consequences of Corollary \ref{RCGAnv} above. 

\begin{theorem}\label{GAnv}

Assume $\pi$ is a holomorphic discrete series of weight $(k_j)_{j=1}^d$ with each $k_j \geq 2$ and conductor 
$c(\pi) \subset \mathcal{O}_F$. Fix a prime ideal $\mathfrak{p} \subset \mathcal{O}_F$ with underlying rational prime $p$, 
together with a totally imaginary quadratic extension $K/F$ of relative discriminant $\mathfrak{D} \subset \mathcal{O}_F$. 
Assume that $(c(\pi), \mathfrak{D}\mathfrak{p}) = (\mathfrak{p}, \mathfrak{D}) = 1$, 
and that ${\bf{Q}}(\pi)$ is linearly disjoint over ${\bf{Q}}$ to the cyclotomic tower 
obtained by adjoining all $p$-power roots of unity ${\bf{Q}}(\zeta_{p^{\infty}}) = \bigcup_{n \geq 1} {\bf{Q}}(\zeta_{p^n})$. 
Let us also assume that $\pi \cong \widetilde{\pi}$ is self-dual. \\

Fix a primitive even Dirichlet character $\chi \bmod p^{\beta}$ for some integer $\beta \geq 0$. 
In the event that $\beta =0$ (hence $\chi$ trivial), let us also assume that the generic root number $\epsilon(1/2, \pi \times \rho)$ for $\rho$ 
ranging over primitive ring class characters characters of conductor $\mathfrak{p}^{\alpha}$ with $\alpha \gg 1$ sufficiently large 
is not equal to $-1$. Then for each sufficiently large integer $\beta \geq 1$, there exists a primitive ring class character $\rho$ 
of conductor $\mathfrak{p}^{\alpha}$ for which the Galois average $G_{[\rho \chi \circ {\bf{N}}]}(\pi)$ does not vanish, 
and so $L(1/2, \pi \times \mathcal{W}) = L(1/2, \pi \times \rho \chi \circ {\bf{N}})$ does not vanish for $\mathcal{W} = \rho \chi \circ {\bf{N}}$ 
ranging over such Hecke characters taking values in roots of unity of exact order $\operatorname{lcm}(p^{\beta}, \operatorname{ord}(\rho))$,
i.e.~where $\operatorname{ord}(\rho) \mid (\#C(\alpha) - \#C (\alpha-1))$ denotes the exact order of the character $\rho$. \end{theorem}

\begin{proof} 

The claim follows from Corollary \ref{RCGAnv}, i.e.~after using Shimura's algebraicity theorem suitably.  \end{proof}

\section{$p$-adic $L$-functions}

Let us assume from now on that $\pi$ is a holomorphic discrete series at each of weight $(k_j)_{j=1}^d$ with $k_j \geq 2$. 
We explain in this setting how to derive stronger nonvanishing results from the existence of a suitable $p$-adic $L$-function. 
This can be viewed as a more efficient way of using the algebraicity theorem of Shimura \cite{Sh} (as done in Theorem \ref{GAnv} above) 
together with congruences to derive stronger results from the nonvanishing of a single character twist (as supplied by Corollary \ref{RCGAnv} above). 
To ensure the existence of such a $p$-adic $L$-function, we shall assume for simplicity that $\pi$ is $\mathfrak{p}$-ordinary at our fixed prime 
$\mathfrak{p} \subset F$, i.e.~that the image of the Hecke eigenvalue $\lambda(\mathfrak{p}) = \lambda_{\pi}(\mathfrak{p})$ under our fixed embedding 
$\overline{\bf{Q}} \rightarrow \overline{\bf{Q}}_p$ is a $p$-adic unit. 

\subsection{Some background} Let us first establish some background notions required to prove our main result.

\subsubsection{Iwasawa algebras}

Let $\mathcal{O}$ be a finite extension of ${\bf{Z}}_p$, $\mathcal{G}$ a profinite group, and 
$\mathcal{O}[[\mathcal{G}]] = \varprojlim_{\mathcal{U} \subset \mathcal{G}} \mathcal{O}[\mathcal{G}/\mathcal{U}]$
its $\mathcal{O}$-Iwasawa algebra. The limit here runs over open normal subgroups $\mathcal{U} \subset \mathcal{G}$.
Note that if $\mathcal{G}$ is abelian and finitely generated, then the elements $\mathcal{L}$ of $\mathcal{O}[[\mathcal{G}]]$ 
can be viewed as $\mathcal{O}$-valued measures $d \mathcal{L}$ on $\mathcal{G}$. 

\subsubsection{Choice of profinite group $\mathcal{G}$}

Let us henceforth consider the profinite group 
\begin{align*} \mathcal{G} &= \varprojlim_{\alpha, \beta} C(\mathcal{O}_{\mathfrak{p}^{\alpha}}) \times ({\bf{Z}} / p^{\beta} {\bf{Z}})^{\times}.\end{align*}
Note that composing with the Artin reciprocity map $\operatorname{rec}_K$ gives us for each integer $\alpha \geq 0$ an identification 
\begin{align*} \operatorname{rec}_K: C(\mathcal{O}_{\mathfrak{p}^{\alpha}}) \cong \varOmega_{\alpha} :
= \operatorname{Gal}(K[\mathfrak{p}^{\alpha}]/K),\end{align*}
where $K[\mathfrak{p}^{\alpha}]$ denotes the ring class extension of conductor $\mathfrak{p}^{\alpha}$ of $K$. 
The torsion subgroup $\Omega_0 = \varOmega_{\operatorname{tors}}$ of 
$\varOmega := \varprojlim_{\alpha} \varOmega_{\alpha}$ is a finite group (see e.g.~\cite[$\S 2$]{CV}), 
and the quotient $\Omega$ of $\varOmega$ by $\Omega_0$ is isomorphic as a topological group to ${\bf{Z}}_p^{\delta}$, 
where $\delta = \delta_{\mathfrak{p}} = [F_{\mathfrak{p}}: {\bf{Q}}_p]$ is the residue degree of $\mathfrak{p}$. 
On the other hand, let us for each integer $\beta \geq 0$ write $\varGamma_{\beta}$ to denote the Galois group 
$\operatorname{Gal}(K(\zeta_{p^{\beta}})/K)$, where $K(\zeta_{p^{\beta}})$ is the extension obtained from $K$ by 
adjoining a primitive $p^{\beta}$-th root of unity $\zeta_{p^{\beta}}$. The corresponding limit
$\varGamma = \varprojlim_{\beta} \varGamma_{\beta}$ is isomorphic as a topological group to ${\bf{Z}}_p^{\times}$,
and hence its torsion subgroup $\Gamma_0 = \varGamma_{\operatorname{tors}}$ is also finite. 
Let us write $\Gamma$ to denote the quotient of $\varGamma$ by $\Gamma_0$, so that $\Gamma$ is isomorphic as topological group to ${\bf{Z}}_p$. 
Writing $K_{\infty}^{(\mathfrak{p})} = \bigcup_{\alpha, \beta \geq 0} K[\mathfrak{p}^{\alpha}] K(\zeta_{p^{\beta}})$ 
to denote the compositum of the extensions $K[\mathfrak{p}^{\alpha}]$ and $K(\zeta_{p^{\beta}})$, the reciprocity map gives an identification
\begin{align*} \operatorname{rec}_K: \mathcal{G} &\longrightarrow \operatorname{Gal}(K_{\infty}^{(\mathfrak{p})}/K) = \varOmega \times \varGamma 
= \varprojlim_{\alpha, \beta \geq 0} \varOmega_{\alpha} \times \varGamma_{\beta}. \end{align*}  

Let us now write $G \approx {\bf{Z}}_p^{\delta + 1}$ to denote the quotient of $\mathcal{G}$ modulo its finite torsion subgroup 
$G_0 = \mathcal{G}_{\operatorname{tors}}$. We can then describe our choice of $\mathcal{G}$ in terms of the following tower of Galois extensions: \\

\begin{center} \scalebox{0.77}{\begin{tikzpicture}[scale=0.7, node distance = 2.2cm, auto]

      \node (B) {$F$};
      \node (K) [above of = B, node distance = 1 cm]{$K$};
      \node (D) [above of=K, left of=K, node distance = 3 cm] {$D_{\infty}$};
      \node (C) [above of=K, right of=K, node distance = 3 cm] {$K^{\operatorname{cyc}}$};
      \node (R) [above of=K, node distance = 6 cm] {$K_{\infty}$};
      \node (A) [above of=D, left of=D, node distance = 1.2 cm]{$K[\mathfrak{p}^{\infty}] = \bigcup_{\alpha \geq 0} K[\mathfrak{p}^{\alpha}]$};
      \node (Z) [above of=C, right of=C, node distance= 1.2 cm]{$K(\zeta_{p^{\infty}}) = \bigcup_{\beta \geq 0} K(\zeta_{p^{\beta}})$};
      \node (P) [above of=R]{$K_{\infty}^{(\mathfrak{p})}$};
    
      \draw[-] (B) to node {} (K); 
      \draw[-] (K) to node {$\Omega \approx {\bf{Z}}_p^{\delta}$} (D);
      \draw[-] (D) to node {$\Omega_0$} (A);
      \draw[-] (A) to node {$\operatorname{Gal}(K_{\infty}^{(\mathfrak{p})}/K[\mathfrak{p}^{\infty}]) \approx \varGamma \approx \Gamma \times \Gamma_0$} (P);
      \draw[-] (K) to node [swap]{$\Gamma \approx {\bf{Z}}_p$} (C);
      \draw[-] (C) to node [swap]{$\Gamma_0$} (Z);
      \draw[-] (Z)  to node [swap]{$\operatorname{Gal}(K_{\infty}^{(\mathfrak{p})}/K(\zeta_{p^{\infty}})) \approx \varOmega \approx \Omega \times \Omega_0$} (P);
      \draw[-] (D) to node {} (R);
      \draw[-] (C) to node {} (R);
      \draw[-] (R) to node {$G_0$} (P);
      \draw[-] (K) to node {$G \approx {\bf{Z}}_p^{1 + \delta}$} (R);
      
      \end{tikzpicture}}\end{center} 
    
We consider the corresponding $\mathcal{O}$-Iwasawa algebra $\mathcal{O}[[\mathcal{G}]] \approx \mathcal{O}[G_0][[G]]$. 
Here, we have an injection
\begin{align}\label{icgr} \mathcal{O}[[\mathcal{G}]] &\longrightarrow \bigoplus_{\mathcal{W}_0 \in G_0^{\vee}} \mathcal{O} [[G]], 
~~~~~  \lambda \longmapsto (\mathcal{W}_0(\lambda))_{\mathcal{W}_0 \in G_0^{\vee}}, \end{align} 
where the sum runs over characters $\mathcal{W}_0 = \rho_0 \psi_0 = \rho_0 (\chi_0 \circ {\bf{N}})$ of the torsion subgroup $G_0$, 
and each $\mathcal{W}_0(\lambda)$ denotes the specialization of the given element $\lambda \in \mathcal{O}[\mathcal{G}]]$ to the 
character $\mathcal{W}_0$ of $G_0$, but not to any character of $G \approx \Omega \times \Gamma \approx {\bf{Z}}_p^{\delta + 1}$. 
Thus, each $\mathcal{W}_0(\lambda)$ denotes a genuine element of the Iwasawa algebra $\mathcal{O}[[G]]$ rather than simply a value 
in $\mathcal{O}$. Note that when $G_0$ has order prime to $p$, this injection $(\ref{icgr})$ is also a surjection. 

\subsubsection{Relation to formal power series}

Taking $G = \operatorname{Gal}(K_{\infty}/K) \approx \Omega \times \Gamma \cong {\bf{Z}}_p^{\delta + 1}$ as above, we view the corresponding 
$\mathcal{O}$-Iwasawa algebra $\mathcal{O}[[G]]$ as a multivariable power series ring in the following standard way. 
Let $r \geq 2$ denote the integer defined by $r = \delta + 1 = [F_{\mathfrak{p}}: {\bf{Q}}_p]+1$. 
Fixing a system of topological generators $\gamma_1, \ldots \gamma_{\delta}$ of $\Omega$ and $\gamma_r$ of $\Gamma$,
we have an isomorphism to the formal power series ring $\mathcal{O}[[T_1, \ldots, T_r]]$ in $r$ indeterminates $T_1, \ldots, T_r$ given by 
\begin{align}\label{nci} \mathcal{O}[[G]] \approx \mathcal{O}[[\Omega \times \Gamma]] 
&\longrightarrow \mathcal{O}[[T_1, \ldots, T_r]], ~~~~~(\gamma_1, \ldots, \gamma_r) \longmapsto (T_1 +1, \ldots, T_r +1). \end{align}
 
\subsubsection{Weierstrass preparation theorem}

Let $R$ be any complete local ring (e.g.~$R = \mathcal{O}[[T]]$) with maximal ideal $\mathfrak{m}_R$, and fix a uniformizer $\varpi_R$ of $R$. 
Consider the formal power series ring $R[[T]]$ in the indeterminate $T$. Recall that a polynomial $g(T)$ in $R[T]$ is said to be {\it{distinguished (or Weierstrass)}} 
if it takes the form \begin{align*} g(T) &= T^n + b_{n-1} T^{n-1} + \ldots + b_0 \end{align*} for some integer $n \geq 1$, with each coefficient $b_i$ lying in the maximal 
ideal $\mathfrak{m}_R$.

\begin{proposition}[Weierstrass preparation theorem]\label{WPT} 

Let $h(T) = \sum_{j \geq 0} a_j T^j$ be an element of the formal power series ring $R[[T]]$. If $h(T)$ is not identically zero, 
then it can be expressed uniquely as the product 
\begin{align*} h(T) &= u(T) g(T) \varpi_R^{\mu_R}\end{align*} of some unit $u(T)$ in $R[[T]]$ times some distinguished polynomial 
$g(T)$ in $R[T]$ times some integer power $\mu_R \geq 0$ of the fixed uniformizer $\varpi_R$ of $R$.\end{proposition}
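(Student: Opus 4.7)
The plan is to follow the classical strategy, which proceeds in three stages: first peel off the maximal power of the uniformizer, then establish a division algorithm in $R[[T]]$ modulo the resulting series, and finally read off the distinguished polynomial $g(T)$ and the unit $u(T)$ from that division.

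First I would set $\mu_R = \min_{j \geq 0} v_R(a_j)$, where $v_R$ denotes the $\varpi_R$-adic valuation on $R$; this minimum exists and is finite because $h(T)$ is nonzero and $R$ is a DVR. Writing $\widetilde{h}(T) = \varpi_R^{-\mu_R} h(T)$, the series $\widetilde{h}(T)$ lies in $R[[T]]$ and has at least one coefficient which is a unit of $R$. Let $n \geq 0$ be the least index $j$ for which $\widetilde{a}_j := \varpi_R^{-\mu_R} a_j$ is a unit. Then the reduction $\overline{\widetilde{h}}(T) \in (R/\mathfrak{m}_R)[[T]]$ has the form $T^n \cdot \overline{v}(T)$ with $\overline{v}(0) \neq 0$, so $\overline{v}(T)$ is a unit in the residue power series ring.

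The technical heart of the argument is the following division lemma: for every $f \in R[[T]]$ there exist unique $q \in R[[T]]$ and $r \in R[T]$ of degree strictly less than $n$ such that $f = q \widetilde{h} + r$. I would prove this by constructing $q$ as a convergent series in the $\mathfrak{m}_R$-adic topology: one defines operators $\tau, \sigma$ on $R[[T]]$ that split a series into its ``low" part (degree $<n$) and ``high" part (coefficients of $T^n$ and beyond, shifted back by $T^n$), and one shows that the map $q \mapsto \sigma(f - q \widetilde{h})$ composed with multiplication by a suitable unit is a contraction modulo $\mathfrak{m}_R$. Iterating gives a Cauchy sequence in $R[[T]]$ whose limit produces the desired $q$ and $r$. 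This completeness-based iteration is the step I expect to be the main obstacle, as one must keep careful bookkeeping to verify that every coefficient of the remainder term indeed lies in $\mathfrak{m}_R$ at each stage, and that the sequence converges to an element of $R[[T]]$ (not merely of the fraction field).

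Granted the division lemma, I would apply it with $f(T) = T^n$ to obtain $T^n = q(T) \widetilde{h}(T) + r(T)$, with $\deg r < n$. Reducing modulo $\mathfrak{m}_R$ and comparing with $\overline{\widetilde h}(T) = T^n \overline{v}(T)$ shows that $\overline{r}(T) = 0$, so the coefficients of $r$ all lie in $\mathfrak{m}_R$; in particular $g(T) := T^n - r(T)$ is distinguished of degree $n$. Reducing the identity $T^n = q \widetilde{h} + r$ modulo $\mathfrak{m}_R$ also forces $\overline{q}(T) \cdot \overline{v}(T) = 1$ in the residue ring, so the constant term of $q(T)$ is a unit in $R$, hence $q(T)$ is a unit in $R[[T]]$. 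Setting $u(T) = q(T)^{-1}$ yields $\widetilde{h}(T) = u(T) g(T)$ and therefore $h(T) = u(T) g(T) \varpi_R^{\mu_R}$, as required. For uniqueness, if $u_1 g_1 \varpi_R^{\mu_1} = u_2 g_2 \varpi_R^{\mu_2}$, comparison of valuations of coefficients forces $\mu_1 = \mu_2$, and then $u_1 g_1 = u_2 g_2$; reducing modulo $\mathfrak{m}_R$ shows $g_1$ and $g_2$ have the same degree, and applying the division lemma to either $g_1$ or $g_2$ gives the equality $g_1 = g_2$ and hence $u_1 = u_2$.
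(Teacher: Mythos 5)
Your proposal is correct and is exactly the standard argument, matching the source the paper cites (Lang, \emph{Algebra}, Ch.\ IV): strip off $\varpi_R^{\mu_R}$, prove a Euclidean division lemma in $R[[T]]$ by $\mathfrak{m}_R$-adic iteration, apply it with $f = T^n$, and read off the distinguished polynomial $g = T^n - r$ and the unit $u = q^{-1}$; the uniqueness argument via comparing valuations and re-applying the division lemma is also the usual one. One point worth sharpening in both your sketch and the paper's statement of the proposition: the fixed-point iteration produces a sequence whose coefficients are Cauchy in the $\mathfrak{m}_R$-adic topology on $R$, so it converges in $R[[T]]$ only when $R$ is $\mathfrak{m}_R$-adically \emph{complete}. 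You half-notice this when you warn that the limit must lie ``in $R[[T]]$ (not merely of the fraction field),'' but the genuine failure mode is different: for a non-complete DVR such as $R = \mathbf{Z}_{(p)}$ one can write down $h \in R[[T]]$ whose Weierstrass factor has coefficients in $\widehat{R} = \mathbf{Z}_p$ that are not in $R$, so the proposition as written for ``any discrete valuation ring'' is too strong. This does not affect the paper, which only ever invokes the result with $R = \mathcal{O}$ a finite extension of $\mathbf{Z}_p$, hence complete, but the hypothesis should be stated.
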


\begin{proof} The result is standard, see e.g. \cite[Ch. IV, Theorem 9.2]{La}. \end{proof}

Given a nonzero element $h(T) = u(T)g(T) \varpi_R^{\mu_R}$ of a formal power series  ring $R[[T]]$ as above, the degree of the distinguished polynomial 
$g(T)$ is known as the {\it{Weierstrass degree of $h(T)$,}} and the positive integer $\mu = \mu_R$ as the {\it{$\mu$-invariant}}. Note that this Weierstrass
degree can also be characterized as the least integer $j \geq 0$ for which the coefficient $a_j$ in the power series expansion 
$h(T) = \sum_{j \geq 0} a_j T^j \in R[[T]]$ is a unit in $R$. 

\subsubsection{Multivariable $p$-adic $L$-functions} 

Fix $\mathcal{O}$ a finite extension of ${\bf{Z}}_p$ containing the Hecke eigenvalues of $\pi$. 
Recall that for $\mathcal{W}$ a Hecke character of $K$, a well-known theorem of Shimura \cite{Sh} shows that the values 
\begin{align}\label{val} \mathcal{L}(1/2, \pi \times \mathcal{W}) &= \frac{L(1/2, \pi \times \mathcal{W})}{ 8 \pi^2 \langle \pi, \pi \rangle} \end{align} 
are algebraic, and moreover that they lie in the finite extension of ${\bf{Q}}$ defined by the compositum of the Hecke field ${\bf{Q}}(\pi)$ with the 
cyclotomic extension ${\bf{Q}}(\mathcal{W})$ of ${\bf{Q}}$ obtained by adjoining the values of the character $\mathcal{W}$.
Via our fixed embedding $\overline{\bf{Q}} \rightarrow \overline{\bf{Q}}_p$, we view the values $(\ref{val})$ as element in $\overline{\bf{Q}}_p$. 
We have the following construction, stated here in a simplified form\footnote{i.e.~without spelling out the interpolation formula} that will suffice for our subsequent arguments. 

\begin{theorem}\label{mvplfn} 

Suppose $\pi$ is associated to a holomorphic $\mathfrak{p}$-ordinary Hilbert modular form of arithmetic weight $k \geq 2$. 
There exists a measure $\mathcal{L}_{\mathfrak{p}} = \mathcal{L}_{\mathfrak{p}}(\pi) \in \mathcal{O}[[\mathcal{G}]]$ 
such that for any nontrivial finite-order character $\mathcal{W}$ of $\mathcal{G}$, we have the interpolation formula
\begin{align}\label{interpolation} \mathcal{W}(\mathcal{L}_{\mathfrak{p}}) 
&=\eta(\pi, \mathcal{W}) \cdot \mathcal{L}(1/2, \pi \times \overline{\mathcal{W}}) \in \overline{\bf{Q}}_p. \end{align} 
Here, $\eta(\pi, \mathcal{W})$ is some algebraic number which does not vanish so long as $\mathfrak{p}$ does not 
divide the conductor of $\pi$, and  $\mathcal{W}(\mathcal{L}_{\mathfrak{p}}) = \int_{\mathcal{G}} \mathcal{W}(\sigma) d \mathcal{L}_{\mathfrak{p}}(\sigma)$ 
denotes the specialization of the measure $\mathcal{L}_{\mathfrak{p}}$ to the character $\mathcal{W}$. \end{theorem}

\begin{proof} See e.g.~\cite{Har}, or the constructions of \cite[$\S 2$]{Pa}, \cite{Da}, \cite{Hi}, and \cite[$\S 5.1$]{PR88}. 
Each of these constructions shows the existence of such an element; again we suppress the exact form of $\eta(\pi, \mathcal{W})$ for simplicity. \end{proof}

\subsection{Power series expansions}

Let $\mathcal{L}_{\mathfrak{p}} \in \mathcal{O}[[\mathcal{G}]]$ be the $p$-adic $L$-function of Theorem \ref{mvplfn} above.
Fix a character $\mathcal{W}_0$ of the finite torsion subgroup $G_0$, and consider the corresponding partially-specialized $p$-adic $L$-function
$\mathcal{W}_0(\mathcal{L}_{\mathfrak{p}}) \in \mathcal{O}[[G]]$. 
Let us then write $\mathcal{L}_{\mathfrak{p}}(\mathcal{W}_0; T_1, \cdots, T_r) \in \mathcal{O}[[T_1, \ldots, T_r]]$ to
denote the image of $\mathcal{W}_0(\mathcal{L}_{\mathfrak{p}})$ under the non-canonical isomorphism $(\ref{nci})$. Recall that we label the 
indeterminates here so that $T_1, \ldots, T_{\delta}$ denote the anticyclotomic variables corresponding to a fixed system $\gamma_1, \ldots, \gamma_{\delta}$ 
of generators of the anticyclotomic Galois group $\Omega \approx {\bf{Z}}_p^{\delta}$, and $T_r = T_{\delta}$ denotes the cyclotomic variable
corresponding to a fixed generator $\gamma_r = \gamma_{\delta +1}$ of the cyclotomic Galois group $\Gamma \approx {\bf{Z}}_p$. 
We now consider expansions of the $p$-adic $L$-function $\mathcal{L}_{\mathfrak{p}}(\mathcal{W}_0; T_1, \ldots, T_r)$ is each $T_j$.
Let $\mathfrak{P}$ denote the maximal ideal of the (complete local) ring $\mathcal{O}$.

\subsubsection{Expansion in the cyclotomic variable} 

Let us now expand the $p$-adic $L$-function $\mathcal{L}_{\mathfrak{p}}(\mathcal{W}_0; T_1, \ldots, T_r)$ in the cyclotomic variable $T_r$. 
Hence, writing $k \in \lbrace 0, 1 \rbrace$ to denote the integer for which the anticyclotomic root number $\epsilon(1/2, \pi \times \rho) = (-1)^k$ 
for all almost all characters $\rho$ of $\varOmega$ (cf~\cite[$\S 1$]{CV}), we have  
\begin{align}\label{Cexp} \mathcal{L}_{\mathfrak{p}}(\mathcal{W}_0; T_1, \cdots, T_r) &= \sum_{j \geq k} a_j(T_1, \ldots, T_{\delta}) T_r^j 
\in \mathcal{O}[[T_1, \cdots, T_{\delta}]][[T_r]]. \end{align}

\begin{proposition}[Least nonvanishing criterion via the cyclotomic variable]\label{lnvcC} 

Let $\mathcal{W}_0$ be any character of the finite torsion subgroup $G_0 = \mathcal{G}_{\operatorname{tors}}$.
Assume that $\operatorname{ord}_{\mathfrak{P}}(\mathcal{L}_{\mathfrak{p}}(\mathcal{W}_0; 0, \ldots, 0, T_r)) = 0$. 
Let us also assume that for some character $\mathcal{W} = \mathcal{W}_0 \mathcal{W}_w = \mathcal{W}_0 \rho_w \psi_w$ factoring through 
$\mathcal{G} \approx G_0 \times \Omega \times \Gamma$, we know that $L(1/2, \pi \times \mathcal{W}) \neq 0$, or equivalently that
$\mathcal{W}(\mathcal{L}_{\mathfrak{p}}) 
= \mathcal{L}_{\mathfrak{p}}(\mathcal{W}_0; \rho_w(\gamma_1)-1, \ldots, \rho_w(\gamma_{\delta})-1, \psi_w(\gamma_r)-1) \neq 0$. 
Then, there exists a minimal exponent $\beta_0 \geq 0$ such that for all characters $\psi_w$ of the cyclotomic Galois group $\Gamma$ of exact 
order $p^{\beta}$ with $\beta \geq \beta_0$, the central value $L(1/2, \pi \times \mathcal{W}_0 \rho_w \psi_w)$ does not vanish for any character $\rho_w$ 
of the anticyclotomic Galois group $\Omega$. \end{proposition}

\begin{proof} See \cite[Part II, Proposition 3.1 (i)]{VO12}. Using Proposition \ref{WPT}, 
we deduce from the nonvanishing hypothesis that $\mathcal{L}_{\mathfrak{p}}(\mathcal{W}_0, T_1, \ldots, T_r)$ has a finite Weierstrass degree $w(T_r)$ in $T_r$.
Since $\operatorname{ord}_{\mathfrak{P}}(\mathcal{L}_{\mathfrak{p}}(\mathcal{W}_0; \cdots, 0, T_r)) = 0$, this latter fact has the following consequence 
for the power series expansion $(\ref{Cexp})$: There exists a least integer $j_0 \geq k$ such that $a_{j_0}(T_1, \cdots T_{\delta})$ is a unit in 
$\mathcal{O}[[T_1, \cdots, T_{\delta}]]$. Since units never specialize to zero, we deduce that there exists a least integer $\beta_0 = \beta_0(w(T_r))$
(depending on the Weierstrass degree $w(T_r)$) such that for all characters $\psi_w$ of $\Gamma$ of exact order $p^{\beta}$ with 
$\beta \geq \beta_0$, we have $\mathcal{L}_{\mathfrak{p}}(\mathcal{W}_0; \rho_w(\gamma_1)-1, \cdots, \rho_w(\gamma_{\delta})-1, \psi_w(\gamma_r)-1) \neq 0$ 
for all characters $\rho_w$ of $\Omega$. This latter assertion is equivalent to the stated claim. \end{proof} 

\begin{corollary}\label{cycmu} 

Assume $\pi \cong \widetilde{\pi}$ is $\mathfrak{p}$-ordinary, and that $(c(\pi), \mathfrak{D}\mathfrak{p}) = (\mathfrak{p}, \mathfrak{D}) = 1$. 
Fix a character $\mathcal{W}_0$ of the torsion subgroup $G_0 = \mathcal{G}_{\operatorname{tors}}$, and assume that the cyclotomic analytic 
$\mu$-invariant of the corresponding $p$-adic $L$-function $\mathcal{W}_0(\mathcal{L}_{\mathfrak{p}}) \in \mathcal{O}[[G]]$ vanishes. 
Then, there exists a minimal exponent $\beta_0 \geq 0$ such that for all characters $\psi_w$ of the cyclotomic Galois group $\Gamma$ 
of exact order $p^{\beta}$ with $\beta \geq \beta_0$, the central value $L(1/2, \pi \times \mathcal{W}_0 \rho_w \psi_w)$ does not vanish 
for any character $\rho_w$ of the anticyclotomic Galois group $\Omega$.

\end{corollary} 

\begin{proof} Taking both $\alpha \gg \beta$ to be sufficiently large, 
we can use the results of Corollary \ref{RCGAnv} (ii) as input for the argument Proposition \ref{lnvcC} described above. 
To be clear, we can assume without loss of generality that the $\mathcal{W}_0 = \rho_0 \psi_0 = \rho_0 \chi_0 \circ {\bf{N}}$
is purely cyclotomic $\mathcal{W}_0 = \rho_0$, i.e.~after replacing the $\chi$ in our main theorem with $\chi \chi_0$ as we may. 
The desired input then follows from Corollary \ref{RCGAnv} (ii) for the weighted average over primitive characters $P(\alpha, \rho_0)$. \end{proof}

\subsubsection{Specialization in the anticyclotomic variables}

Let us now fix an index $1 \leq l \leq \delta$, and consider the expansion of 
$\mathcal{L}_{\mathfrak{p}}(\mathcal{W}_0; T_1, \ldots, T_r)$ in each anticyclotomic variable $T_l$: 
\begin{align} \mathcal{L}_{\mathfrak{p}}(\mathcal{W}_0; T_1, \ldots, T_r) 
&= \sum_{i \geq 0} b_i(T_1, \ldots, T_{l-1}, T_{l+1}, \cdots, T_{\delta}) T_l^i
\in \mathcal{O}[[T_1, \ldots, T_{l-1}, T_{l+1}, \cdots, T_{\delta}]] [[T_r]].  \end{align}

\begin{proposition}[Least nonvanishing criterion via the anticyclotomic variable]\label{lnvcA}

Let $\mathcal{W}_0$ be any character of the torsion subgroup $G_0 = \mathcal{G}_{\operatorname{tors}}$.
Assume that $\operatorname{ord}_{\mathfrak{P}}(\mathcal{L}_{\mathfrak{p}}(\mathcal{W}_0; 0, \ldots, 0, T_l, 0, \ldots, 0)) = 0$ 
for each index $1 \leq l \leq \delta$. Let us also assume that for some character 
$\mathcal{W} = \mathcal{W}_0 \mathcal{W}_w = \mathcal{W}_0 \rho_w \psi_w$ 
factoring through $\mathcal{G}$, we know that $L(1/2, \pi \times \mathcal{W}) \neq 0$, 
or equivalently that $\mathcal{W}(\mathcal{L}_{\mathfrak{p}}) 
= \mathcal{L}_{\mathfrak{p}}(\mathcal{W}_0; \rho_w(\gamma_1)-1, \ldots, \rho_w(\gamma_{\delta}-1), \psi_w(\gamma_r)-1) \neq 0$. 
Then, there exists a minimal exponent $\alpha_0 \geq 0$ such that for all characters $\rho_w$ of the anticyclotomic 
Galois group $\Omega$ of exact order $p^{\alpha}$ with $\alpha \geq \alpha_0$, the central value 
$L(1/2, \pi \times \mathcal{W}_0 \rho_w \psi_w)$ does not vanish for any character $\psi_w$ of the cyclotomic Galois group $\Gamma$. 

\end{proposition}

\begin{proof} We apply the same argument as given for Proposition \ref{lnvcC} in each anticylotomic indeterminate $T_l$. 
In this way, we deduce that for each index $1 \leq l \leq \delta$, there exists a minimal exponent $\alpha_0(l)$ such that 
$\mathcal{L}_{\mathfrak{p}}(\mathcal{W}_0; \rho_w(\gamma_1)-1, \cdots, \rho_w(\gamma_{\delta})-1, \psi_w(\gamma_r)-1) \neq 0$ for all 
characters $\rho_w$ of $\Omega$ of exact order $p^{\alpha}$ for $\alpha \geq \alpha_0(l)$ and all characters $\psi_w$ 
of $\Gamma$. Taking $\alpha_0 = \max_{1 \leq l \leq \delta} \alpha_0(l)$ then proves the claim. \end{proof}

\begin{corollary}\label{AC} 

Assume $\pi \cong \widetilde{\pi}$ is $\mathfrak{p}$-ordinary, 
and that $(c(\pi), \mathfrak{D}\mathfrak{p}) = (\mathfrak{p}, \mathfrak{D}) = 1$. Fix a character $\mathcal{W}_0$ of the torsion subgroup 
$G_0 = \mathcal{G}_{\operatorname{tors}}$, and assume that the anticyclotomic analytic $\mu$-invariant of the corresponding 
$p$-adic $L$-function $\mathcal{W}_0(\mathcal{L}_{\mathfrak{p}}) \in \mathcal{O}[[G]]$ vanishes. Then, there exists a minimal exponent 
$\alpha_0 \geq 0$ such that for all characters $\rho_w$ of the anticyclotomic Galois group $\Omega$ of exact order $p^{\alpha}$ with $\alpha \geq \alpha_0$, 
the central value $L(1/2, \pi \times \mathcal{W}_0 \rho_w \psi_w)$ does not vanish for any character $\psi_w$ of the Galois group $\Gamma$. \end{corollary}

\begin{proof} Again, we can use the result of Corollary \ref{RCGAnv} as input to deduce the claim. \end{proof}

Finally, we can deduce the following unconditional result in this direction thanks to \cite{HC}.

\begin{theorem}\label{ACmu} 

Assume that the cuspidal automorphic representation $\pi \cong \widetilde{\pi}$ as described above is $\mathfrak{p}$-ordinary, 
that $(c(\pi), \mathfrak{D}\mathfrak{p}) = (\mathfrak{p}, \mathfrak{D}) = 1$, and also that the residual Galois representation 
associated to $\pi$ by constructions of Carayol \cite{Ca}, Taylor \cite{Ta}, and Wiles \cite{Wi} is absolutely irreducible. 
Fix a character $\mathcal{W}_0$ of the torsion subgroup $G_0 = \mathcal{G}_{\operatorname{tors}}$. 
There exists a minimal exponent  $\alpha_0 \geq 0$ such that for all characters $\rho_w$ of the anticyclotomic Galois group 
$\Omega \approx {\bf{Z}}_p^{\delta}$ of exact order $p^{\alpha}$ with $\alpha \geq \alpha_0$, the central value 
$L(1/2, \pi \times \mathcal{W}_0 \rho_w \psi_w)$ does not vanish for any character $\psi_w$ of the cyclotomic Galois group $\Gamma \approx {\bf{Z}}_p$. \end{theorem}

\begin{proof} We use (as input for Corollary \ref{AC}) the result of Chida-Hsieh \cite[Theorem C]{HC}, which implies that 
\begin{align*} \operatorname{ord}_{\mathfrak{P}}\left( \mathcal{L}_{\mathfrak{p}}(\mathcal{W}_0; T_1, \ldots, T_{\delta}, 0) \right) &= 0 \end{align*}
under the stated hypotheses on the conductor $c(\pi)$ and the Galois representation associated to $\pi$. \end{proof}

\appendix

\section*{Appendices}
\renewcommand{\thesubsection}{\Alph{subsection}}

\section{Shifted convolution sums over totally real fields}

We now give a proof of Theorem \ref{SCS} from the body of the text. 
The idea is to use the surjectivity of the archimedean Kirillov map for $\pi$ to realize the shifted
convolution sum on the left hand side as the Fourier-Whittaker coefficient at $q$ of a certain genuine automorphic form $\Phi$ on the two-fold
metaplectic cover $\overline{G}({\bf{A}}_F)$ of $\operatorname{GL}_2({\bf{A}}_F)$ (see \cite{Ge}). Decomposing $\Phi$ spectrally, 
and using its convergence in the Sobolev norm topology, the stated bound can be derived from existing bounds for the Fourier-Whittaker 
coefficients of each form appearing in the decomposition. Let us now now explain this idea in more detail as follows, noting that the special case of 
$F = {\bf{Q}}$ is worked out in \cite[Theorem 1]{TT} (cf.~also \cite{BH10}). 

\subsubsection*{Fourier-Whittaker expansions}

Let $\psi = \otimes \psi_v$ denote the standard additive character on ${\bf{A}}_F/F$. Hence, $\psi$ is trivial on $F$, agrees with the function 
$x = (x_j)_{j=1}^d \mapsto \exp(2 \pi i(x_1 + \cdots x_d))$ on the archimedean component $F_{\infty} = F \otimes_{\bf{Q}} {\bf{R}}$ of ${\bf{A}}_F$, 
and each finite place $v$ is trivial on the local inverse different $\mathfrak{d}_{F, v}^{-1}$ but nontrivial on $v^{-1} \mathfrak{d}_{F, v}^{-1}$. 
Let $\phi \in V_{\pi}$ be any vector in the representation space of $\pi$. We have for $x \in {\bf{A}}_F$ any generic adele and $y \in {\bf{A}}_F^{\times}$ 
any generic idele the Fourier-Whittaker expansion 
\begin{align}\label{fwse1} \phi \left( \left(\begin{array} {cc} y &  x \\ ~ & 1 \end{array}\right) \right) &= \sum_{\gamma \in F^{\times}}
W_{\phi} \left( \left(\begin{array} {cc} \gamma &  ~ \\ ~ & 1 \end{array}\right) 
\left(\begin{array} {cc} y &  ~ \\ ~ & 1 \end{array}\right) \right) \psi(- \gamma x). \end{align}
Here, for any $g \in \operatorname{GL}_2({\bf{A}}_F)$, we write 
\begin{align*} W_{\phi}(g) &:= \int_{ {\bf{A}}_F/F } \phi \left( \left(\begin{array} {cc} 1 &  x \\ ~ & 1 \end{array}\right) g \right) \psi(-x) dx \end{align*}
to denote the Whittaker function of $\phi$. Note that if we decompose the idele $y \in {\bf{A}}_F^{\times}$ into its corresponding 
nonarchimedean and archimedean components as $y = y_f y_{\infty}$, with $y_f \in {\bf{A}}_{F, f}^{\times}$ and 
$y_{\infty} \in F_{\infty}^{\times} \cong ({\bf{R}}^{\times})^d$, then we can also decompose the (specialized) Whittaker 
function $W_{\phi}$ its corresponding nonarchimedean component 
\begin{align*} \rho_{\phi}(y_f) = \rho_{\phi} \left( \left(\begin{array} {cc} y_f &  ~ \\ ~ & 1 \end{array}\right)  \right) 
&:= W_{\phi} \left( \left(\begin{array} {cc} y_f &  ~ \\ ~ & 1 \end{array}\right) \right) \end{align*}  
and archimedean component 
\begin{align*} W_{\phi}(y_{\infty}) &:= W_{\phi} \left( \left(\begin{array} {cc} y_{\infty} &  ~ \\ ~ & 1 \end{array}\right) \right), \end{align*}
so that $(\ref{fwse1})$ is the same as 
\begin{align}\label{fwse2} \phi \left( \left(\begin{array} {cc} y &  x \\ ~ & 1 \end{array}\right) \right) 
&= \sum_{\gamma \in F^{\times}} \rho_{\phi}(\gamma y_f) W_{\phi} \left( \gamma y_{\infty} \right) \psi(- \gamma x). \end{align}
Note as well that if $\phi \in V_{\pi}$ is a new vector, and we write $\vert \cdot \vert$ to denote the idele norm, 
then the coefficients $\rho_{\phi}(\gamma y_f)$ are related to the $L$-function coefficients $\lambda(\gamma y_f) = \lambda_{\pi}(\gamma y_f)$ 
in the sense that the Fourier-Whittaker expansion $(\ref{fwse1})$ (or $(\ref{fwse2})$) is equivalent to  
\begin{align}\label{fwse3} \phi \left( \left(\begin{array} {cc} y &  x \\ ~ & 1 \end{array}\right) \right) 
&= \sum_{\gamma \in F^{\times}} \frac{\lambda(\gamma y_f)}{ \vert \gamma y_f \vert^{\frac{1}{2}}} W_{\phi}(\gamma y_{\infty}) \psi(\gamma x). \end{align}

In what follows, we shall always take $\phi \in V_{\pi}$ to be a pure tensor $\phi = \otimes_v \phi_v$ whose nonarchimedean components are each 
essential Whittaker vectors. The corresponding Whittaker coefficients $\rho_{\phi}$ are then related to the $L$-function coefficients 
$\lambda$ as in $(\ref{fwse3})$. That is, the local vectors $\phi_v$ are then related directly via Mellin transformation to the corresponding local Euler 
factors of $L(s, \pi)$. We shall also make a precise choice of the archimedean local vectors $\phi_{\infty} = \otimes_{v \mid \infty} \phi_v$ as follows. 
Namely, we shall use the surjectivity of the archimedean Kirillov map $\phi \mapsto W_{\phi} $, which as explained in \cite[$\S$ 2.5 (37)]{BH10} 
(for instance) induces an isometry between the representation space $V_{\pi}$ and the Whittaker model $\mathcal{W}(\pi)$ of $\pi$. In particular:

\begin{proposition}\label{choice} 

Let $W \in L^2(F_{\infty}^{\times}) \cong L^2(({\bf{R}}^{\times})^d)$ be any smooth\footnote{Strictly speaking, 
we should impose the condition that $W$ be compactly supported to match the statements of results in the literature.
In practice however, this condition is really only imposed to ensure the square summability of the function.} function on 
$F_{\infty}^{\times} \cong ({\bf{R}}^{\times})^d$. Let $\pi$ be any cuspidal automorphic representation of $\operatorname{GL}_2({\bf{A}}_F)$.
Then, there exists a vector $\phi \in V_{\phi}$ whose corresponding (archimedean) Whittaker function $W_{\phi}$ satisfies  
$W_{\phi}(y_{\infty}) = W(y_{\infty})$ as function(s) of $y_{\infty} \in F_{\infty}^{\times}$. \end{proposition}

\begin{proof} The result is relatively well-known; see \cite[(37), Lemma 3]{BH10} and \cite[Proposition 2.1]{TT} (for instance). \end{proof}

We use this result to choose our archimedean vector $\phi_{\infty} = \otimes_{v \mid \infty} \phi_v$ in such a way that the corresponding archimedean 
Whittaker coefficient $W_{\phi}(y_{\infty})$ in the expansion $(\ref{fwse3})$ matches the chosen test function that appears in the statement of Theorem \ref{SCS}. 
That is, we use Proposition \ref{choice} to derive an integral presentation for the shifted convolution sum appearing in Theorem \ref{SCS} as follows. 
Let us now consider the $F$-rational quadratic form defined on $\gamma \in F$ by $Q(\gamma) = \gamma^2$. Let $\theta_Q$ denote the corresponding 
half-integral weight theta series, viewed as an automorphic form on the metaplectic cover $\overline{G}({\bf{A}}_F)$ of $\operatorname{GL}_2({\bf{A}}_F)$. 
This theta series has the following expansion (at archimedean components): 
For $x_{\infty} \in F_{\infty} \cong {\bf{R}}^d$ and $y_{\infty} \in F_{\infty}^{\times} \cong ({\bf{R}}^d)^{\times}$, 
\begin{align*} \theta_Q \left( \left(\begin{array} {cc} y_{\infty} &  x_{\infty} \\ ~ & 1 \end{array}\right) \right) 
&= \vert y_{\infty} \vert^{\frac{1}{4}} \sum_{\gamma \in F} \psi(Q(\gamma) (x_{\infty} + iy_{\infty}))
= \vert y_{\infty} \vert^{\frac{1}{4}} \sum_{ q \in \mathcal{O}_F} \psi(Q(q)(x_{\infty}+ iy_{\infty})). \end{align*} 
Let us also write $\overline{\theta}_Q = T_{-1}\theta_Q$ to denote the image of $\theta_Q$ under the Hecke 
operator $T_{-1}$ corresponding to the classical Hecke operator sending $z \in \mathfrak{H} $ to $-\overline{z}$. 
The corresponding form $\overline{\theta}_Q$ then has the expansion 
\begin{align}\label{fwse4} \overline{\theta}_Q \left( \left(\begin{array} {cc} y_{\infty} &  x_{\infty} \\ ~ & 1 \end{array}\right) \right) 
&= \vert y_{\infty} \vert^{\frac{1}{4}} \sum_{\gamma \in F} \psi(- Q(\gamma) (x_{\infty} - iy_{\infty}))
= \vert y_{\infty} \vert^{\frac{1}{4}} \sum_{\gamma \in \mathcal{O}_F} \psi(- Q(\gamma)(x_{\infty}- iy_{\infty})). \end{align}
Fixing $\phi = \otimes_v \phi_v \in V_{\pi}$ a pure tensor as described above, 
we consider the product $\Phi = \phi \overline{\theta}_Q$. Note that this $\Phi$ is a genuine automorphic form 
on the metaplectic group $\overline{G}({\bf{A}}_F)$. Abstractly, it has the Fourier expansion 
\begin{align*} \Phi \left(  \left(\begin{array} {cc} y_{\infty} &  x_{\infty} \\ ~ & 1 \end{array}\right) \right) 
&= \sum_{\gamma \in F} W_{\Phi} \left(   \left(\begin{array} {cc} \gamma &  ~ \\ ~ & 1 \end{array}\right) 
\left(\begin{array} {cc} y_{\infty} &  ~ \\ ~ & 1 \end{array}\right) \right) \psi(\gamma x_{\infty}) 
= \sum_{ q \in \mathcal{O}_F} W_{\Phi} \left(   \left(\begin{array} {cc} q &  ~ \\ ~ & 1 \end{array}\right) 
\left(\begin{array} {cc} y_{\infty} &  ~ \\ ~ & 1 \end{array}\right) \right) \psi( q x_{\infty}), \end{align*}
where for each $F$-integer $\mathfrak{q}$, 
\begin{align*} W_{\Phi} \left(   \left(\begin{array} {cc}  q &  ~ \\ ~ & 1 \end{array}\right) 
\left(\begin{array} {cc} y_{\infty} &  ~ \\ ~ & 1 \end{array}\right) \right) 
&= \int_{ I \cong [0,1]^d \subset F_{\infty} } \Phi \left(  \left(\begin{array} {cc} 1 & x_{\infty} \\ ~ & 1 \end{array}\right) 
\left(\begin{array} {cc} y_{\infty} &  ~ \\ ~ & 1 \end{array}\right) \right) \psi(- q x_{\infty}) dx_{\infty}. \end{align*}

\begin{proposition}\label{fcpresentation} 

Let $W \in L^2(F_{\infty}^{\times}) \cong L^2(({\bf{R}}^{\times})^d)$ be any smooth function on 
$F_{\infty}^{\times} \cong ({\bf{R}}^{\times})^d$. Fix a nonzero $F$-integer $q \in \mathcal{O}_F$, 
as well as an archimedean idele $Y_{\infty} \in F_{\infty}^{\times}$ with idele norm $\vert Y_{\infty} \vert \gg \vert q \vert$.
Let $\pi$ be any cuspidal automorphic representation of $\operatorname{GL}_2({\bf{A}}_F)$.
Fix $\phi = \otimes_v \phi_v \in V_{\pi}$ a pure tensor whose nonarchimedean local components $\phi_v$ are each essential Whittaker vectors,
and whose archimedean local component $\phi_{\infty} = \otimes_{v \mid \infty} \phi_v$ is chosen in such a way that 
\begin{align*} W_{\phi}(y_{\infty}) &:= W_{\phi} \left( \left(\begin{array} {cc} y_{\infty} &  ~ \\ ~ & 1 \end{array}\right) \right)
= \psi(- i y_{\infty}) \psi \left( \frac{i q}{Y_{\infty}} \right) W(y_{\infty}) \end{align*}
as a function of $y_{\infty} \in F_{\infty}^{\times} \cong ({\bf{R}}^{\times})^d$. Then, the coefficient at $q$ in the expansion of 
\begin{align*} \Phi \left( \left(\begin{array} {cc} \frac{1}{Y_{\infty}} & ~ \\ ~ & 1 \end{array}\right) \right) 
= \phi \overline{\theta}_Q \left( \left(\begin{array} {cc} \frac{1}{Y_{\infty}} & ~ \\ ~ & 1 \end{array}\right) \right) \end{align*} 
is given by 
\begin{align*} \int_{ I \cong [0,1]^d \subset F_{\infty} } 
\Phi \left(   \left(\begin{array} {cc} \frac{1}{Y_{\infty}} & x_{\infty} \\ ~ & 1 \end{array}\right)   \right) \psi(- q x_{\infty}) dx_{\infty}
&= \vert Y_{\infty} \vert^{- \frac{1}{4}} \sum_{ \gamma \in F^{\times}} 
\frac{ \lambda(Q(\gamma) + q) }{ \vert Q(\gamma) + q \vert^{\frac{1}{2}}} 
W \left(  \frac{ Q(\gamma) + q }{Y_{\infty}}  \right). \end{align*}
Equivalently, we have the integral presentation 
\begin{align}\label{or} \vert Y_{\infty} \vert^{\frac{1}{4}} \int_{ I \cong [0,1]^d \subset F_{\infty} } \phi \overline{\theta}_Q \left( 
\left(\begin{array} {cc} \frac{1}{Y_{\infty}} & x_{\infty} \\ ~ & 1 \end{array}\right) \right) \psi(- q x_{\infty} ) dx_{\infty} 
&= \sum_{ \gamma \in F^{\times} } \frac{\lambda( \gamma^2 + q)}{\vert \gamma^2 + q \vert^{\frac{1}{2}} } 
W \left( \frac{ \gamma^2 + q }{ Y_{\infty} } \right). \end{align} 
Here, each of the sums is supported only on nonzero $F$-integers. \end{proposition} 

\begin{proof} Cf.~\cite[$\S 6.1$]{TT}. We use the expansions $(\ref{fwse3})$ and $(\ref{fwse4})$ to compute 
\begin{align*} & \int_{ I \cong [0,1]^d \subset F_{\infty} } \phi \left(   \left(\begin{array} {cc} \frac{1}{Y_{\infty}} & x_{\infty} \\ ~ & 1 \end{array}\right)   \right) 
\overline{\theta}_Q \left(   \left(\begin{array} {cc} \frac{1}{Y_{\infty}} & x_{\infty} \\ ~ & 1 \end{array}\right)   \right) \psi(- q x_{\infty}) dx_{\infty} 
\\
&= \int_{ I \cong [0,1]^d \subset F_{\infty} } 
\sum_{\gamma_1 \in F^{\times}} \frac{\lambda(\gamma_1)}{\vert \gamma_1 \vert^{\frac{1}{2}}} 
W_{\phi} \left( \frac{\gamma_1}{Y_{\infty}} \right) \psi(\gamma_1 x_{\infty}) 
\cdot \vert Y_{\infty} \vert^{-\frac{1}{4}} \sum_{\gamma_2 \in F} \psi \left( \frac{Ê i Q(\gamma_2)}{Y_{\infty}} \right) 
\psi(- Q(\gamma_2) x_{\infty}) \psi(- q x_{\infty}) dx_{\infty}  \\
&= \vert Y_{\infty} \vert^{-\frac{1}{4}} \sum_{\gamma_1 \in F^{\times}} \frac{\lambda(\gamma_1)}{\vert \gamma_1 \vert^{\frac{1}{2}}} 
W_{\phi} \left( \frac{\gamma_1}{Y_{\infty}} \right) \sum_{\gamma_2 \in F} \psi \left(  \frac{Êi Q(\gamma_2)}{Y_{\infty}} \right) 
\int_{ I \cong [0,1]^d \subset F_{\infty} } \psi(\gamma_1 x_{\infty} - Q(\gamma_2) x_{\infty} - q x_{\infty}) dx_{\infty}. \end{align*} 
To compute the integral, recall that the characters on the compact abelian group $I \cong [0,1]^d \cong ({\bf{R}}/{\bf{Z}})^d$ 
can be parametrized by $\psi(\gamma x_{\infty})$ for any fixed $x_{\infty} \in I$ with $\gamma \in F$ varying. 
We can then use the orthogonality of these characters to deduce that the integral vanishes unless $\gamma_1 - Q(\gamma_2) - q = 0$, so
\begin{align*} \int_{ I \cong [0,1]^d \subset F_{\infty} } 
\Phi \left(   \left(\begin{array} {cc} \frac{1}{Y_{\infty}} & x_{\infty} \\ ~ & 1 \end{array}\right)   \right) \psi(- q x_{\infty}) dx_{\infty} 
&= \vert Y_{\infty} \vert^{-\frac{1}{4}} \sum_{ \gamma \in F^{\times}} \frac{\lambda(Q(\gamma) + q)}{\vert Q(\gamma) + q \vert^{\frac{1}{2}}}
W_{\phi} \left( \frac{Q(\gamma) + q}{Y_{\infty}} \right) \psi \left( \frac{i Q(\gamma)}{Y_{\infty}} \right). \end{align*}
Now, by our choice of archimedean local vector $\phi_{\infty} = \otimes_{v \mid \infty} \phi_v$, this latter identity is the same as 
\begin{align*} \int_{ I \cong [0,1]^d \subset F_{\infty} } 
\Phi \left(   \left(\begin{array} {cc} \frac{1}{Y_{\infty}} & x_{\infty} \\ ~ & 1 \end{array}\right)   \right) \psi(- q x_{\infty}) dx_{\infty} 
&= \vert Y_{\infty} \vert^{-\frac{1}{4}} \sum_{ \gamma \in F^{\times}} \frac{\lambda(Q(\gamma) + q)}{\vert Q(\gamma) + q \vert^{\frac{1}{2}}}
W \left( \frac{Q(\gamma) + q}{Y_{\infty}} \right) \end{align*} or equivalently 
\begin{align*} \vert Y_{\infty} \vert^{\frac{1}{4}}  \int_{ I \cong [0,1]^d \subset F_{\infty} } \phi \overline{\theta}_Q 
\left(  \left(\begin{array} {cc} \frac{1}{Y_{\infty}} & x_{\infty} \\ ~ & 1 \end{array}\right)   \right) \psi(- q x_{\infty}) dx_{\infty} 
&= \sum_{ \gamma \in F^{\times}} \frac{\lambda( \gamma^2 + q)}{{\bf{N}}( \gamma^2 + q)^{\frac{1}{2}}} W \left( \frac{\gamma^2 + q}{Y_{\infty}} \right). \end{align*}
Note that these expansions are supported only on nonzero $F$-integers. The claimed presentation $(\ref{or})$ follows. \end{proof}

\subsubsection*{Upper bounds for Whittaker functions}

We shall use the following general bounds for classical Whittaker functions, as derived via contour integral arguments in \cite[$\S$ 7]{TT} 
for the case of $F = {\bf{Q}}$. These bounds can be applied componentwise to derive bounds in the more general setting we consider here. 
Let us for arbitrary complex numbers $\kappa, \nu \in {\bf{C}}$ consider the classical Whittaker function $W_{\kappa, \nu}(y)$ defined on a 
positive real variable $y \in {\bf{R}}_{>0}$ as in \cite[$\S$ 7.1]{TT}. To be clear about the choice of normalization of $W_{\kappa, \nu}$, 
we note that the Mellin transform of $e^{\frac{y}{2}} W_{\kappa, \nu}(y)$ at $s \in {\bf{C}}$ 
with $\Re(s) > \frac{1}{2} \pm \nu$ is known via direct calculation to equal 
\begin{align*} \int_0^{\infty} e^{\frac{y}{2}} W_{\kappa, \nu}(y) y^s \frac{dy}{y} 
&= \frac{\Gamma \left( \frac{1}{2} + s + \nu \right) \Gamma \left( \frac{1}{2} + s - \nu \right)}{\Gamma \left( 1 + s - \kappa \right)}. \end{align*}

\begin{proposition}\label{whittaker}

There exists a constant $A > 0$ for which we have the following uniform bounds in $y \in {\bf{R}}_{>0}$ with $y \rightarrow 0$, 
i.e.~for any choice of real parameter $y >0$ in the interval $0 < y < 1$:

\begin{itemize}

\item[(i)] If $\kappa, \nu \in {\bf{R}}$, then we have for any choice of $\varepsilon >0$ the upper bound 
\begin{align*} \frac{ W_{\kappa, \nu}(y)}{ \Gamma \left( \frac{1}{2} + \kappa + i \nu \right)  }  
&\ll_{\varepsilon} \left( \vert \kappa \vert + \vert \nu \vert  + 1 \right)^A y^{\frac{1}{2} - \varepsilon}. \end{align*}

\item[(ii)] If $\kappa, \nu \in {\bf{R}}$ with $0 < \nu < \frac{1}{2}$, then we have for any choice of $\varepsilon >0$ the upper bound 
\begin{align*} \frac{W_{\kappa, \nu}(y)}{\Gamma \left( \frac{1}{2} + \kappa \right)} 
&\ll_{\varepsilon} \left( \vert \kappa \vert + 1 \right)^A y^{\frac{1}{2} - \nu - \varepsilon}. \end{align*}

\item[(iii)] If $\kappa, \nu \in {\bf{R}}$ with $\kappa - \nu - \frac{1}{2} \in {\bf{Z}}_{\geq 0}$ 
and $\nu > - \frac{1}{2} + \varepsilon$, then we have the upper bound 
\begin{align*} \frac{W_{\kappa, \nu}(y)}{ \left\vert \Gamma \left( \frac{1}{2} + \kappa - \nu  \right) 
\Gamma \left( \frac{1}{2} + \kappa + \nu \right)  \right\vert^{\frac{1}{2}}  }
&\ll_{\varepsilon} \left( \vert \kappa \vert + \vert \nu \vert + 1 \right)^A y^{\frac{1}{2} - \varepsilon}. \end{align*}

\end{itemize} 

Here, $\vert \cdot \vert$ denotes the complex absolute value. 
Each of these bounds is uniform in the choice of weight $\kappa$ and spectral parameter $\nu$, 
with the implied constant depending only on the corresponding choice of $\varepsilon >0$.

\end{proposition}

\begin{proof} See \cite[Proposition 3.1]{TT}; the bounds are derived via contour integral presentations in \cite[$\S 7$]{TT}. \end{proof}

Let us now return to the setting we consider throughout this work, with $F$ a totally real number field of degree $d = [F:{\bf{Q}}]$.
Given $d$-tuples $\kappa = (\kappa_j)_{j=1}^d \in F_{\infty} \cong {\bf{R}}^d$ and $\nu = (\nu_j)_{j=1}^d \in F_{\infty} \cong {\bf{R}}^d$, 
we consider the Whittaker function $W_{\kappa, \nu}(y_{\infty})$ defined on 
$y_{\infty} = (y_{\infty, j})_{j=1}^d \in F_{\infty}^{\times} \cong ({\bf{R}}^{\times})^d $ in the natural way via the product 
\begin{align*} W_{\kappa, \nu}(y_{\infty}) &= \prod_{j=1}^d W_{\kappa_j, \nu_j}(\vert y_{\infty, j} \vert). \end{align*}
We can then derive the following immediate consequence from Proposition \ref{whittaker} for the setting 
of totally real fields we consider here, i.e.~where the notations are now altered accordingly to reflect this.

\begin{corollary}\label{totallyrealwhittaker}

There exists a constant $A > 0$ for which we have the following uniform bounds in the archimedean idele variable
$y_{\infty} = (y_{\infty, j})_{j=1}^d \in F_{\infty}^{\times} \cong ({\bf{R}}^{\times})^d$ with $0 < \vert y_{\infty, j} \vert < 1$ for each index $1 \leq j \leq d$.

\begin{itemize}

\item[(i)] For all $\kappa = (\kappa_j)_{j=1}^d, \nu = (\nu_j)_{j=1}^d \in F_{\infty} \cong {\bf{R}}^d$, we have for any $\varepsilon >0$ the bound 
\begin{align*} \frac{ W_{\kappa, \nu}(y_{\infty})}{ \prod_{j=1}^d \Gamma \left( \frac{1}{2} + \kappa_j + i \nu_j \right)  }  
&\ll_{\varepsilon} \vert y_{\infty} \vert^{ \frac{1}{2} - \varepsilon} \prod_{j=1}^d \left( \vert \kappa_j \vert + \vert \nu_j \vert  + 1 \right)^A, \end{align*}
where $\vert y_{\infty} \vert = \prod_{j=1}^d \vert y_{\infty, j} \vert$ denotes the idele norm of $y_{\infty} = (y_{\infty, j})_{j=1}^d$. \\

\item[(ii)] For all $\kappa = (\kappa_j)_{j=1}^d, \nu = (\nu_j)_{j=1}^d \in F_{\infty} \cong {\bf{R}}^d$ 
with $0 < \nu_j  < \frac{1}{2}$ for each index $1 \leq j \leq d$, we have for any choice $\varepsilon >0$ the bound 
\begin{align*} \frac{W_{\kappa, \nu}(y)}{ \prod_{j=1}^d \Gamma \left( \frac{1}{2} + \kappa_j \right)} 
&\ll_{\varepsilon} \prod_{j=1}^d \left( \vert \kappa_j \vert + 1 \right)^A \vert y_j \vert^{\frac{1}{2} - \nu_j - \varepsilon}.  \end{align*}

\item[(iii)] If $\kappa = (\kappa_j)_{j=1}^d, \nu = (\nu_j)_{j=1}^d \in F_{\infty} \cong {\bf{R}}^d$ 
with $\kappa_j - \nu_j - \frac{1}{2} \in {\bf{Z}}_{\geq 0}$ and $\nu_j > - \frac{1}{2} + \varepsilon$ for each index $1 \leq j \leq d$, we have the bound 
\begin{align*} \frac{W_{\kappa, \nu}(y)}{ \prod_{j=1}^d \left\vert \Gamma \left( \frac{1}{2} + \kappa_j - \nu_j  \right) 
\Gamma \left( \frac{1}{2} + \kappa_j + \nu_j \right)  \right\vert^{\frac{1}{2}}  }
&\ll_{\varepsilon}  \vert y_{\infty} \vert^{\frac{1}{2} - \varepsilon}  \prod_{j=1}^d 
\left( \vert \kappa_j \vert + \vert \nu_j \vert + 1 \right)^A. \end{align*}
Here again, $\vert y_{\infty} \vert = \prod_{j=1}^d \vert y_{\infty, j} \vert$ denotes the idele norm of $y_{\infty} = (y_{\infty, j})_{j=1}^d$. \end{itemize}

Again, these bounds are uniform in the choice of weight $\kappa = (\kappa_j)_{j=1}^d$ and spectral parameter $\nu = (\nu_j)_{j=1}^d$.

\end{corollary}

\subsubsection*{Spectral decomposition of genuine metaplectic forms}

We now consider the genuine automorphic form $\Phi = \phi \overline{\theta}_Q$ on $\overline{G}({\bf{A}}_F)$ 
appearing in $(\ref{or})$. We shall decompose $\Phi$ spectrally to prove Theorem \ref{SCS}. 
Viewing $\operatorname{GL}_2$ as an algebraic group, we let $\overline{G}$ denote its two-fold metaplectic cover, 
as constructed via cocycles in \cite{Ge}. The adelic points $\overline{G}({\bf{A}}_F)$ fit into the exact sequence
\begin{align*} 1 \longrightarrow C_2 \longrightarrow \overline{G}({\bf{A}}_F) \longrightarrow \operatorname{GL}_2({\bf{A}}_F) \longrightarrow 1, \end{align*}
where $C_2 = \lbrace \pm 1 \rbrace$ denotes the group of square roots of unity. We recall that an automorphic form on $\overline{G}({\bf{A}}_F)$ which 
transforms nontrivially under $C_2$ is said to be genuine, in which case it corresponds to a classical Hilbert modular form of half-integral weight. We write
$L^2(\operatorname{GL}_2(F) \backslash \overline{G}({\bf{A}}_F), \omega)$ to denote the space of such genuine automorphic forms 
(of central character $\omega$), although the notation is perhaps ambiguous, i.e.~as this space does not include non-genuine forms 
arising as liftings of $\operatorname{GL}_2({\bf{A}}_F)$-automorphic forms. 
This space of genuine automorphic forms on $\overline{G}({\bf{A}}_F)$ decomposes into a Hilbert direct sum 
\begin{align*} L^2(\operatorname{GL}_2(F) \backslash \overline{G}({\bf{A}}_F), \omega) 
&= L^2_{\operatorname{disc}} \left( \operatorname{GL}_2(F) \backslash \overline{G}({\bf{A}}_F), \omega \right) \oplus
L^2_{\operatorname{cont}} \left( \operatorname{GL}_2(F) \backslash \overline{G}({\bf{A}}_F), \omega \right) \end{align*}
of a discrete spectrum $L^2_{\operatorname{disc}}\left( \operatorname{GL}_2(F) \backslash \overline{G}({\bf{A}}_F), \omega \right)$
plus a continuous spectrum $L^2_{\operatorname{cont}} \left( \operatorname{GL}_2(F) \backslash \overline{G}({\bf{A}}_F), \omega \right)$
spanned by analytic continuations of Eisenstein series. The discrete spectrum decomposes further into a direct sum    
\begin{align*} L^2_{\operatorname{disc}}(\operatorname{GL}_2(F) \backslash \overline{G}({\bf{A}}_F), \omega) 
&= L^2_{\operatorname{cusp}} \left( \operatorname{GL}_2(F) \backslash \overline{G}({\bf{A}}_F), \omega \right) \oplus
L^2_{\operatorname{res}} \left( \operatorname{GL}_2(F) \backslash \overline{G}({\bf{A}}_F), \omega \right) \end{align*} 
of cuspidal forms $L^2_{\operatorname{cusp}} \left( \operatorname{GL}_2(F) \backslash \overline{G}({\bf{A}}_F), \omega \right)$
defined by the usual vanishing condition over unipotent integrals plus residual forms 
$L^2_{\operatorname{res}} \left( \operatorname{GL}_2(F) \backslash \overline{G}({\bf{A}}_F), \omega \right)$
which arise as residues of Eisenstein series. We note that the latter space is spanned by theta series, 
and more specifically translates of the metaplectic theta series $\theta_Q$ introduced above (see \cite[$\S$ 6]{Ge}). It is then apparent
(cf.~\cite[$\S$ 6]{TT}) that we can find a basis $\mathcal{B}$ of $L^2(\operatorname{GL}_2(F) \backslash \overline{G}({\bf{A}}_F), \omega)$ consisting of: \\

\begin{itemize}

\item An orthonormal basis $\lbrace f_i \rbrace_i$ consisting of cuspidal forms 
$f_i \in L^2_{\operatorname{cusp}} \left( \operatorname{GL}_2(F) \backslash \overline{G}({\bf{A}}_F), \omega \right)$
of respective weights $\kappa_i = (\kappa_{i, j})_{j=1}^d$ and spectral parameters $\nu_i = (\nu_{i, j})_{j=1}^d$. \\

\item An orthonormal basis $\lbrace \vartheta_{\xi} \rbrace_{\xi}$ consisting of residual forms $\vartheta_{\xi} \in 
L^2_{\operatorname{res}} \left( \operatorname{GL}_2(F) \backslash \overline{G}({\bf{A}}_F), \omega \right)$ of 
respective weights $\kappa_{\xi} = (\kappa_{\xi, j})_{j=1}^d$ and spectral parameters $\nu_{\xi} = (\nu_{\xi, j})_{j=1}^d$. \\

\item An orthonormal basis $\lbrace \mathcal{E}_{\varpi} \rbrace_{\varpi}$ consisting of (contour integrals of) Eisenstein
series $\mathcal{\varpi}$ of respective weights $\kappa_{\varpi} = (\kappa_{\varpi, j})_{j=1}^d$ and spectral parameters 
$\nu_{s, \varpi} = (\nu_{s, \varpi, j})_{j=1}^d$ for $L^2_{\operatorname{cont}} \left( \operatorname{GL}_2(F) \backslash \overline{G}({\bf{A}}_F), \omega \right)$,
\begin{align*} L^2_{\operatorname{cont}} \left( \operatorname{GL}_2(F) \backslash \overline{G}({\bf{A}}_F), \omega \right) 
&= \int_{\Re(s)=1/2} \bigoplus_{\varpi} \mathcal{E}_{\varpi}(*, s) \frac{ds}{2 \pi i}. \\ \end{align*}

\end{itemize} 

Decomposing $\Phi = \phi \overline{\theta}_Q$ in terms of such a basis $\mathcal{B}$, 
we obtain for any $\overline{g} = (g, \zeta) \in \overline{G}({\bf{A}}_F)$ the decomposition
\begin{align}\label{SD} \Phi(\overline{g}) 
&= \sum_i \langle \Phi, f_i \rangle \cdot f_i (\overline{g})  + \sum_{\xi} \langle \Phi, \vartheta_{\xi} \rangle \cdot \vartheta_{\xi}(\overline{g})
+ \sum_{\varpi} \int_{\Re(s) = 1/2} \langle \Phi, \mathcal{E}_{\varpi}(*, s) \rangle \cdot \mathcal{E}_{\varpi}(\overline{g}, s) \frac{ds}{2 \pi i}. \end{align}
Here, $\langle \cdot, \cdot \rangle$ denotes the inner product on $L^2\left( \operatorname{GL}_2(F) \backslash \overline{G}({\bf{A}}_F), \omega \right)$.
To see that the sums over coefficients $\mathfrak{K}_i = \langle \Phi, f_i \rangle$, $\mathfrak{K}_{\xi} = \langle \Phi, \vartheta_{\xi} \rangle$, and 
$\mathfrak{K}_{\varpi} = \langle \Phi, \mathcal{E}_{\varpi}(*, s) \rangle$ appearing in $(\ref{SD})$ are bounded in a suitable way, 
we shall use the fact that $\Phi$ has convergent Sobolev norm. To be more precise, let us first recall the definition of the Sobolev norm
on any sufficiently smooth $L^2$-automorphic form $\phi$ on $\operatorname{GL}_2({\bf{A}}_F)$, i.e.~on any sufficiently smooth automorphic 
form $\phi \in L^2(\operatorname{GL}_2(F) \backslash \operatorname{GL}_2({\bf{A}}_F), \omega)$. We refer to \cite[$\S 2.10$]{BH10} 
(cf.~\cite{MiVe}, \cite[$\S$ 6]{TT}) for more background. In short, the action of of $\operatorname{GL}_2(F_{\infty})$ on 
$L^2(\operatorname{GL}_2(F) \backslash \operatorname{GL}_2({\bf{A}}_F), \omega)$ induces an action
of its Lie algebra $\mathfrak{gl}_2(F_{\infty})$ on this space, and hence an action of its Lie subalgebra 
$\mathfrak{g} = \mathfrak{sl}_2(F_{\infty})$ on this space. Writing $e_j = (0, \cdots, 0, 1, 0, \cdots, 0)$ with $1$ at position $j$
for each index $1 \leq j \leq d$, this latter action is generated by the linearly independent vectors 
\begin{align*} H_j = \left(\begin{array} {cc} e_j & 0 \\ 0 & e_j \end{array}\right)  \quad \quad  
R_j  =  \left(\begin{array} {cc} 0 & e_j \\ 0 & 0 \end{array}\right) \quad \quad 
L_j =   \left(\begin{array} {cc} 0 & 0 \\ e_j & 0 \end{array}\right). \end{align*}
We also have the action of the universal enveloping algebra $\mathcal{U}(\mathfrak{g})$ of $\mathfrak{g}$ on 
$L^2(\operatorname{GL}_2(F) \backslash \operatorname{GL}_2({\bf{A}}_F), \omega)$ via higher-order differential operators. 
Writing a generic element $k(\vartheta) \in \operatorname{SO}_2(F_{\infty})$ for $\vartheta = (\vartheta_j)_{j=1}^d \in ({\bf{R}}/{\bf{Z}})^d$ as 
\begin{align*} k(\vartheta) 
&=  \left(\begin{array} {cc} \cos \vartheta & \sin \vartheta \\ - \sin \vartheta & \cos \vartheta \end{array}\right) \in \operatorname{SO}_2(F_{\infty}), \end{align*}
the operators corresponding to the basis elements are given explicitly by 
\begin{align*} d H_j &= -2 y_j \sin(2 \vartheta_j) \partial_{x_j} + 2 y_j \cos(2 \vartheta_j) \partial_{y_j} + \sin(2 \vartheta_j) \partial_{\vartheta_j} \\ 
d R_J &= y_j \cos(2 \vartheta_j) \partial_{x_j} + y_j \sin(2 \vartheta_j) \partial_{y_j} + \sin^2(\vartheta_j) \partial_{\vartheta_j} \\
dL_j &= y_j \cos(2 \vartheta_j) \partial_{x_j} + y_j \sin(2 \vartheta_j) \partial_{y_j} - \cos^2(\vartheta_j) \partial_{\vartheta_j}.  \end{align*} 
Now, as explained \cite[$\S 2.10$]{BH10}, we know that the action of any differential operator $\mathcal{D} \in \mathcal{U}(\mathfrak{g})$ 
commutes with the corresponding spectral decomposition of $\phi$. One can extend this discussion in a natural way to the space of 
genuine metaplectic forms $L^2(\operatorname{GL}_2(F) \backslash \overline{G}({\bf{A}}_F), \omega)$ (see \cite[$\S 6.2$]{TT} and more generally 
\cite{MiVe}). In particular, there is a corresponding action of the universal enveloping algebra $\mathcal{U}(\mathfrak{g})$ on 
$L^2(\operatorname{GL}_2(F) \backslash \overline{G}({\bf{A}}_F), \omega)$. This action commutes with the spectral decomposition $(\ref{SD})$ 
of $\Phi \in L^2(\operatorname{GL}_2(F) \backslash \overline{G}({\bf{A}}_F), \omega)$ 
in the sense that for any $\mathcal{D} \in \mathcal{U}(\mathfrak{g})$, we have the relation 
\begin{align}\label{SDrelation} \vert \vert \mathcal{D}\Phi \vert\vert^2 
&= \sum_i \langle \Phi, f_i \rangle^2 \cdot \vert \vert \mathcal{D} \Phi \vert\vert^2 
+ \sum_{\xi} \langle \Phi, \vartheta_{\xi} \rangle^2 \cdot \vert \vert \mathcal{D} \vartheta_{\xi} \vert\vert^2 + 
\int_{\Re(s) = 1/2} \sum_{\varpi} \langle \Phi, \mathcal{E}_{\varpi}(*, s) \rangle^2 
\cdot \vert\vert \mathcal{D} \mathcal{E}_{\varpi} \vert\vert^2 \frac{ds}{2 \pi i}. \end{align}
Such a relation of course holds for any $\Phi \in L^2(\operatorname{GL}_2(F) \backslash \overline{G}({\bf{A}}_F), \omega)$,
and it is then natural for any choice of integer $B \geq 0$ to define the corresponding Sobolev norm 
\begin{align*} \vert \vert \Phi \vert \vert_{\mathcal{S}^B} 
&= \sum_{ \operatorname{ord}(\mathcal{D}) \leq B} \vert \vert \mathcal{D} \Phi \vert \vert^2,\end{align*}
where the sum runs over all monomials in $H_{j_1}$, $R_{j_2}$, and $L_{j_3}$ of order at most $B$.
It is well-known that any smooth $\Phi \in L^2(\operatorname{GL}_2(F) \backslash \overline{G}({\bf{A}}_F), \omega)$
is convergent in this Sobolev norm in the following sense. 

\begin{lemma}\label{SNC} 

Given $\phi$ any smooth automorphic form in $\operatorname{GL}_2(F) \backslash \operatorname{GL}_2({\bf{A}}_F)$ 
or more generally $\operatorname{GL}_2(F) \backslash \overline{G}({\bf{A}}_F)$ of a given central character (not necessarily $\mathcal{K}$-finite)
and $B \geq 0$ any integer, we have the Sobolev norm bound $\vert\vert \phi \vert\vert_{\mathcal{S}^B} \ll 1$. 
Here, the implied constant depends only on the choice of integer $B \geq 0$. \end{lemma}

\begin{proof} See \cite[Lemma 6.1]{TT} or more generally \cite[$\S$ 2.4]{MiVe}. \end{proof}

Using this result, we can deduce from the relation $(\ref{SDrelation})$ that the sums of coefficients in the spectral expansion 
$(\ref{SD})$ are bounded suitably in terms of the corresponding spectral parameters. We refer to the discussions in 
\cite[$\S$ 5-6]{TT} and \cite[$\S$ 2.10]{BH10} for more details. This convergence allows us to proceed with the proof of Theorem \ref{SCS} 
via the spectral decomposition $(\ref{SD})$ of the genuine metaplectic form $\Phi = \phi \overline{\theta}_Q$ as follows.
 
\subsubsection*{Proof of Theorem \ref{SCS}} 

Recall that via the integral presentation $(\ref{or})$ above, it will suffice to bound the Fourier-Whittaker coefficient at 
a give nonzero $F$-integer $q$ of the genuine metaplectic form $\Phi = \phi \overline{\theta}_Q$. 
To be more precise, it will do to consider only the spectral decomposition of the coefficient at $q$ of $\Phi$,
which we can expand out via $(\ref{SD})$ of $\Phi$ at the elements 
\begin{align*} \overline{g} &= (g, \zeta) 
= \left( \left(\begin{array} {cc} \frac{1}{Y_{\infty}} & x_{\infty} \\ ~ & 1 \end{array}\right) , 1 \right) \in \overline{G}(F_{\infty}), \end{align*}
suppressing the $\zeta \in C_2 = \lbrace \pm 1 \rbrace$ henceforth to lighten notation to derive the presentation 
\begin{align*} &\sum_{ \gamma \in F^{\times} } \frac{\lambda( \gamma^2 + q)}{\vert \gamma^2 + q \vert^{\frac{1}{2}} }  
W \left( \frac{ \gamma^2 + q }{ Y_{\infty} } \right) \\ 
&= \vert Y_{\infty} \vert^{\frac{1}{4}} \int_{ I \cong [0,1]^d \subset F_{\infty} } 
\Phi \left( \left(\begin{array} {cc} \frac{1}{Y_{\infty}} & x_{\infty} \\ ~ & 1 \end{array}\right) \right) \psi(- q x_{\infty} ) dx_{\infty}  \\
&= \vert Y_{\infty} \vert^{\frac{1}{4}} \sum_{i} \langle \Phi, f_i \rangle \cdot \int_{ I \cong [0,1]^d \subset F_{\infty} } 
f_i \left( \left(\begin{array} {cc} \frac{1}{Y_{\infty}} & x_{\infty} \\ ~ & 1 \end{array}\right) \right) \psi(- q x_{\infty} ) dx_{\infty} \\
&+ \vert Y_{\infty} \vert^{\frac{1}{4}} \sum_{\xi} \langle \Phi, \vartheta_{\xi} \rangle \cdot \int_{ I \cong [0,1]^d \subset F_{\infty} } 
\vartheta_{\xi} \left( \left(\begin{array} {cc} \frac{1}{Y_{\infty}} & x_{\infty} \\ ~ & 1 \end{array}\right) \right) \psi(- q x_{\infty} ) dx_{\infty} \\
&+ \vert Y_{\infty} \vert^{\frac{1}{4}} \sum_{\varpi} \int_{\Re(s)=1/2} \langle \Phi, \mathcal{E}_{\varpi}(*, s) \rangle \cdot 
\int_{ I \cong [0,1]^d \subset F_{\infty} } \mathcal{E}_{\varpi} \left( \left( \left(\begin{array} {cc} 
\frac{1}{Y_{\infty}} & x_{\infty} \\ ~ & 1 \end{array}\right) \right), s \right)
\psi (- q x_{\infty} ) dx_{\infty} \frac{ds}{2 \pi i}. \end{align*}
Note that the contributions of the spectral coefficients in this expression are bounded via the convergence of $\Phi$ in the Sobolev norm
(cf.~$(\ref{SDrelation})$ with Lemma \ref{SNC}). We can now follow essentially the same argument as given in \cite[$\S$ 6.6-6.8]{TT} to
derive the stated estimate of Theorem \ref{SCS} for the shifted convolution sum on the left hand side of this expression. 
To be more precise, let us first consider the second integral over residual terms $\lbrace \vartheta_{\xi} \rbrace_{\xi}$ 
in this spectral expansion, which after writing $c_{\vartheta_{\xi}}$ to denote the coefficients in the Dirichlet series of 
each respective Mellin transform ($L$-function) $L(s, \vartheta_{\xi})$ can be expressed equivalently as 
\begin{align*} \vert Y_{\infty} \vert^{\frac{1}{4}} \sum_{\xi} \langle \Phi, \vartheta_{\xi} \rangle \cdot \int_{ I \cong [0,1]^d \subset F_{\infty} } 
\vartheta_{\xi} \left( \left(\begin{array} {cc} \frac{1}{Y_{\infty}} & x_{\infty} \\ ~ & 1 \end{array}\right) \right) \psi(- q x_{\infty} ) dx_{\infty}
&= \vert Y_{\infty} \vert^{\frac{1}{4}} \sum_{\xi} \langle \phi \overline{\theta}_Q, \vartheta_{\xi} \rangle \cdot 
\frac{ c_{\vartheta_{\chi}(q)}}{\vert q \vert^{\frac{1}{2}}} W_{\vartheta_{\xi}} \left( \frac{q}{Y_{\infty}} \right). \end{align*}
Hence (cf.~\cite[$\S 6.8$]{TT}), we can derive the crude estimate 
\begin{align*} \vert Y_{\infty} \vert^{\frac{1}{4}} \sum_{\xi} \langle \Phi, \vartheta_{\xi} \rangle \cdot \int_{ I \cong [0,1]^d \subset F_{\infty} } 
\vartheta_{\xi} \left( \left(\begin{array} {cc} \frac{1}{Y_{\infty}} & x_{\infty} \\ ~ & 1 \end{array}\right) \right) \psi(- q x_{\infty} ) dx_{\infty} 
&= I(W) M_{\pi, q} \end{align*} 
for some linear functional $I(W)$ in the chosen weight function $W$, where $M_{\pi, q} \geq 0$ is a constant depending
only on our initial cuspidal automorphic representation $\pi$ of $\operatorname{GL}_2({\bf{A}}_F)$ and the chosen $F$-integer $q$.
More precisely, we see from inspection of the inner products $\langle \phi \overline{\theta}_Q, \vartheta_{\xi} \rangle$ that $M_{\pi, q}$ 
vanishes unless $\pi$ is dihedral and $q$ totally positive, i.e.~since the theory of the Shimura integral (see e.g.~\cite[$\S$ 4.7]{TT}) 
implies that the inner product vanishes unless the completed symmetric square $L$-function $\Lambda(s, \operatorname{Sym}^2 \pi)$ 
has a pole at $s=1$, and since the coefficients in the Fourier-Whittaker expansion of $\vartheta_{\xi}$ are only supported on totally positive 
$F$-integers. Since we assume throughout that the representation $\pi$ is non-dihedral, we deduce that $M_{\pi, q} =0$, and hence that this 
term does not contribute to the estimate. 
To bound the remaining contributions, we first argue that it will suffice to consider the cuspidal spectrum, as the contribution of the continuous 
spectrum can be estimated via a minor variation of the same argument (cf.~\cite{BH10}, \cite[$\S$ 6]{TT}). To estimate the cuspidal contribution, 
we again write the $L$-function coefficients of each cusp form $f_i$ as $c_{f_i}$, so that 
\begin{align}\label{Icusp} \vert Y_{\infty} \vert^{\frac{1}{4}} \sum_{i} \langle \Phi, f_i \rangle \cdot \int_{ I \cong [0,1]^d \subset F_{\infty} } 
f_i \left( \left(\begin{array} {cc} \frac{1}{Y_{\infty}} & x_{\infty} \\ ~ & 1 \end{array}\right) \right) \psi(- q x_{\infty} ) dx_{\infty} 
&= \vert Y_{\infty} \vert^{\frac{1}{4}} \sum_i \langle \phi \overline{\theta}_Q, f_i \rangle 
\cdot \frac{c_{f_i}(q)}{\vert q \vert^{\frac{1}{2}}} W_{f_i} \left(\frac{q}{Y_{\infty}}  \right). \end{align}
Let us now write $0 < \delta_0 \leq 1/2$ to denote the best exponent approximation for the Fourier coefficients of genuine
metaplectic forms, i.e.~so that $c_{f_i}(q) \ll_{\varepsilon} \vert q \vert^{\delta_0 + \varepsilon}$ for any 
$\varepsilon >0$ and nonzero $F$-integer $q$. Note that by the theorem of Kohnen-Zagier \cite{KZ} and more generally
Baruch-Mao \cite{BM07}, this exponent $\delta_0$ is seen to be equivalent to the best exponent approximation the generalized
Lindel\"of hypothesis for $\operatorname{GL}_2({\bf{A}}_F)$-automorphic forms, as mentioned above. 
Using Corollary \ref{totallyrealwhittaker} to bound the contribution of each archimedean Whittaker function 
$W_{f_i}(y_{\infty}) = W_{\kappa_i, \nu_i}(y_{\infty})$ of $y_{\infty} \in F_{\infty}^{\times}$, 
we deduce that $(\ref{Icusp})$ can be bounded above by the quantity 
\begin{align*} &\ll_{\varepsilon} \vert Y_{\infty} \vert^{\frac{1}{2}} \cdot 
\sum_i \langle \phi \overline{\theta}_Q, f_i \rangle \cdot \vert q \vert^{\delta_0 + \varepsilon - \frac{1}{2}}
\cdot \left\lvert  \frac{q}{Y_{\infty}} \right\rvert^{\frac{1}{2} - \frac{\theta_0}{2} - \varepsilon} \cdot 
\prod_{j=1}^d \left( \vert \kappa_{i, j} \vert + \vert \nu_{i, j} \vert + 1 \right)^A \cdot \vert \vert f_i \vert\vert \\ 
&= \vert Y_{\infty} \vert^{\frac{1}{4}} \cdot \sum_i \langle \phi \overline{\theta}_Q, f_i \rangle \cdot \vert q \vert^{\delta_0 + \varepsilon - \frac{1}{2}}
\cdot \left\lvert  \frac{q}{Y_{\infty}} \right\rvert^{\frac{1}{2} - \frac{\theta_0}{2} - \varepsilon} \cdot 
\prod_{j=1}^d \left( \vert \kappa_{i, j} \vert + \vert \nu_{i, j} \vert + 1 \right)^A  \end{align*} 
for any choice of $\varepsilon >0 $. Here, we use that $\vert \vert f_i \vert \vert =1$ for each $i$ (by our choice of orthonormal basis),
and that the spectral parameter $\nu$ can be approximated by $\theta_0/2$ (cf. \cite[Proposition 3.1 and Lemma 6.2]{TT}). We also 
choose the archimedean idele representative $Y_{\infty} \in F_{\infty}^{\times}$ in a suitable way so that the bounds of 
Corollary \ref{totallyrealwhittaker} can be applied. Now, using the Sobolev norm convergence (Lemma \ref{SNC}) 
and more precisely the Plancherel formula with itererated applications of the generalized Laplacian operator to deduce that 
\begin{align*} \sum_{i} \langle \phi \overline{\theta}_Q, f_i \rangle^2 \cdot 
\prod_{j=1}^d \left( \vert \kappa_{i, j} \vert + \vert \nu_{i, j} \vert + 1  \right)^{2A} 
\ll \vert\vert \phi \overline{\theta}_Q \vert\vert^2 \ll_{2A} 1, \end{align*} 
we can then apply the Cauchy-Schwarz inequality to the quantity in the previous bound to deduce that 
\begin{align*} \vert Y_{\infty} \vert^{\frac{1}{4}} \sum_{i} \langle \Phi, f_i \rangle \cdot \int_{ I \cong [0,1]^d \subset F_{\infty} } 
f_i \left( \left(\begin{array} {cc} \frac{1}{Y_{\infty}} & x_{\infty} \\ ~ & 1 \end{array}\right) \right) \psi(- q x_{\infty} ) dx_{\infty} 
&\ll_{\varepsilon, 2A} \vert Y_{\infty} \vert^{\frac{1}{4}} \cdot \vert q  \vert^{\delta_0 - \frac{1}{2} } 
\cdot \left\lvert  \frac{ q}{Y_{\infty}} \right\rvert^{\frac{1}{2} - \frac{\theta_0}{2} - \varepsilon}. \end{align*}
The contributions from the continuous spectrum are bounded in the same way, using a variation of the argument given above
to bound spectral coefficients (via adjoint properties of the inner product). The claimed estimate of Theorem \ref{SCS} follows. $\qed$

\subsubsection*{Proof of Theorem \ref{SCS2}}

We now give the following variation of the proof of Theorem \ref{SCS} to show Theorem \ref{SCS2}. 
Let us take for granted the setup described for the statement of Theorem \ref{SCS2}. 
To begin, fix $W \in L^2(F_{\infty}^{\times})$ any smooth weight 
function as in the statement, i.e.~after composing with the norm.
Let $\phi = \otimes_v \phi_v \in V_{\pi}$ be any pure tensor whose nonarchimedean local components are each essential Whittaker vectors, 
and whose archimedean local components are chosen so that as functions of $y_{\infty} \in F_{\infty}^{\times} \cong ({\bf{R}}^{\times})^d$,
\begin{align*} W_{\phi}(y_{\infty}) := W_{\phi} \left( \left( \begin{array}{cc} y_{\infty} & ~\\Ê~& 1 \end{array}  \right) \right) &= W(y_{\infty}). \end{align*}
It is then easy to deduce the properties of the Fourier-Whittaker expansion of $\phi$ outlined above that 
\begin{align}\label{quadsum} \sum_{\gamma \in F^{\times}} W_{\phi} \left( \left( \begin{array}{cc} \frac{q(\gamma)}{Y_{\infty}} & ~Ê\\Ê~Ê& 1 \end{array} \right) \right)
= \sum_{\gamma \in F^{\times}} \frac{\lambda_{\pi}(q(\gamma))}{\vert \gamma \vert^{\frac{1}{2}}} W\left( \frac{q(\gamma)}{Y_{\infty}} \right). \end{align}
Here, we use all of the same conventions as above, taking $Y_{\infty} \in F_{\infty}^{\times}$ to be a fixed, 
totally positive archimedean idele representative of norm $\vert Y_{\infty} \vert = Y$. 
Note that the sum $(\ref{quadsum})$ here is supported only on $F$-integers $\gamma = n \in \mathcal{O}_F/ \mathcal{O}_F^{\times}$. 
We shall henceforth write $(n)$ to denote the principal ideal generated by such an $F$-integer to simplify notations.
Our strategy now is to decompose the pure tensor $\phi$ into a linear combination of smooth Poincar\'e series
on $\operatorname{GL}_2({\bf{A}}_F)$ in the style of Blomer \cite[$\S 3$]{VB}, 
then apply Poisson summation to identify the coefficients in the sum with the Fourier-Whittaker coefficients of certain
genuine Poincar\'e series on the metaplectic cover $\overline{G}({\bf{A}}_F)$. 
Making such an identification, we can then decompose these genuine forms spectrally according to the discussion given above, 
passing to unipotent integrals describing the coefficients rather than invoking any Kuznetsov trace formula directly. 
In fact, this procedure allows us to give a ``soft" generalization of the argument of \cite{VB}, choosing suitable vectors in the 
Kirillov model and Schwartz functions parametrizing Poincar\'e series to reduce to a variation of the proof given for Theorem \ref{SCS} above. 

To make this argument rigorous, we first need to recall some more background about the Poincar\'e series. 
Let us again write $N_2 \subset \operatorname{GL}_2$ to denote the subgroup of upper-triangular unipotent matrices, 
and consider the space $\mathcal{S}(N_2({\bf{A}}_F)\backslash \overline{G}({\bf{A}}_F); \psi)$ of smooth functions $f$ on 
$\overline{G}({\bf{A}})$ whose action by $N_2({\bf{A}}_F)$ is given by a chosen additive character $\psi$ 
of ${\bf{A}}_F/F$ (extended in the natural way to $N_2(F) \backslash N_2({\bf{A}})$),
i.e.~so that $f(n \overline{g}) = \psi(n) f(\overline{g} )$ for all $n \in N_2({\bf{A}}_F)$ and $\overline{g} = (g, \zeta) \in \overline{G}({\bf{A}}_F)$
with $g \in \operatorname{GL}_2({\bf{A}}_F)$ and $\zeta \in Z_2$. Fixing $\Gamma \subset \operatorname{GL}_2({\bf{A}}_F)$ a discrete,
finite co-volume subgroup. Let $\Gamma_{\infty} \cong N_2(F)$ denote the subgroup stabilized by the cusp at infinity.
Let us also fix an idele class character $\omega$ of ${\bf{A}}_F$, viewed as a character of 
$\gamma \in \Gamma$ in the usual way via evaluation at the lower left entry.
Let us also fix a multiplier system $\vartheta: \operatorname{GL}_2 \rightarrow {\bf{C}}$, 
as described in \cite{Pro} and \cite[$\S 2$]{Ge} for each of the real places. 
We then consider the Poincar\'e series $P_{f, \omega, \vartheta}$ defined on $\overline{g} = (g, \zeta) \in \overline{G}({\bf{A}}_F)$ by 
\begin{align}\label{P} P_{f, \omega, \vartheta}(\overline{g}) 
&= \sum_{\gamma \in \Gamma_{\infty} \backslash \Gamma} \omega(\gamma) \cdot \overline{\vartheta(\gamma)} \cdot f(\gamma \overline{g}). \end{align}
This Poincar\'e series $P_{f, \omega, \vartheta}$ is absolutely convergent, uniformly on compact subsets, 
and determines a smooth $L^2$-automorphic on $\overline{G}({\bf{A}}_F)$ (see e.g.~\cite{CPS}). 
Note that by restricting to the component $\operatorname{GL}_2({\bf{A}}_F)$ of $\overline{G}({\bf{A}}_F)$ 
we obtain a natural subspace inclusion $\mathcal{S}(N_2({\bf{A}}_F) \backslash \operatorname{GL}_2({\bf{A}}_F); \psi)
\subseteq \mathcal{S}(N_2({\bf{A}}_F)\backslash \operatorname{GL}_2({\bf{A}}_F); \psi)$, 
and hence recover the corresponding construction of Poincar\'e series on $\operatorname{GL}_2({\bf{A}}_F)$ (taking the multiplier term to be trivial). 
We can also compute Fourier-Whittaker expansions as follows, 
noting that our setup here is simpler than the case of general number fields,
as we consider only sums over principal ideals in $(\ref{quadsum})$. 
Following \cite{CPS}, let us consider the set of $F$-rational numbers determined by the set 
\begin{align*} \Omega(\Gamma) &= \left\lbrace c \in F_{\infty}^{\times}: 
N_2(F_{\infty}) \cdot w \cdot \underline{c} \cdot N_2(F_{\infty}) \cap \Gamma \neq \emptyset \right\rbrace, \end{align*}
where we use the shorthand notations  
\begin{align*} \underline{c} &:= \left( \begin{array}{cc} c & ~\\ ~Ê& 1 \end{array} \right) \quad \text{and} \quad
w := \left( \begin{array}{cc} ~& -1 \\ 1 & ~\end{array} \right).\end{align*}
Given $c \in \Omega(\Gamma)$, we then consider the subgroup $\Gamma_c \subset \Gamma$ defined by 
\begin{align*} \Gamma_c &:=  N_2(F_{\infty}) \cdot w \cdot \underline{c} \cdot N_2(F_{\infty}) \cap  \Gamma, \end{align*}
which is left and right invariant by $\Gamma_{\infty}$. We can also decompose each  
$\gamma \in \Gamma_c$ according to the Bruhat decomposition as 
$\gamma = n_1(\gamma) \cdot w \cdot \underline{c} \cdot n_2(\gamma)$
with $n_j(\gamma) \in N_2(F_{\infty})$ for each of $j=1,2$. More explicitly, we have  
\begin{align*} \gamma = \left( \begin{array}{cc} a & b \\Êc & d \end{array} \right) \in \Gamma_c
\quad \implies \quad n_1(\gamma) = \left(\begin{array}{cc}1&ac^{-1}\\~&1\end{array} \right)
\quad \text{and} \quad n_2(\gamma) =  \left(\begin{array}{cc}1&dc^{-1}\\~&1\end{array} \right) ,\end{align*} 
which can be deduced from the elementary matrix decomposition 
\begin{align*} \left( \begin{array}{cc} a & b \\ c & d \end{array} \right) &= 
\left( \begin{array}{cc} 1 & ac^{-1} \\ ~ & 1 \end{array} \right)
\left( \begin{array}{cc} ~ & -1 \\ 1 &  \end{array} \right)
\left( \begin{array}{cc} c & ~ \\ ~ & c^{-1}(ad-bc) \end{array} \right)
\left( \begin{array}{cc} 1 & dc^{-1} \\ ~ & 1 \end{array} \right). \end{align*}

\begin{proposition}\label{PoincarŽ} 

Given nontrivial additive characters $\psi_1, \psi_2$ of ${\bf{A}}_F/F$, the Fourier-Whittaker coefficient 
\begin{align*} W_{P_{f, \omega, \vartheta}}(\overline{g}) = W_{P_{f, \omega, \vartheta}, \psi_2}(\overline{g}) 
&:= \int_{N_2(F) \backslash N_2({\bf{A}}_F)} P_{f, \omega, \vartheta} \left( n \overline{g} \right) \psi_2^{-1}(n) dn \end{align*}
with respect to $\psi_2$ of the Poincar\'e series $P_{f, \omega, \vartheta}$ constructed from some 
$f \in \mathcal{S}(N_2({\bf{A}}_F) \backslash \overline{G}({\bf{A}}_F); \psi_1)$
is given by the following formula: For any $\overline{g} = (g, \zeta) \in \overline{G}({\bf{A}}_F)$, we have that 
\begin{align*} W_{P_{f, \omega, \vartheta}}(\overline{g}) &= \int_{ {\bf{A}}_F/F }  f(\overline{g}) \psi_1(x)\psi_2(-x) dx \\
&+ \sum_{c \in \Omega(\Gamma)} 
\left(  \sum\limits_{\gamma \in \Gamma_{\infty} \backslash \Gamma_{c} / \Gamma_{\infty}  } 
\omega(\gamma) \cdot \overline{\vartheta (\gamma)} \cdot \psi_1(n_1(\gamma) ) \cdot \psi_2( n_2(\gamma) ) \right)
\int_{N_2({\bf{A}}_F)} f(w \cdot \underline{c} \cdot n \cdot \overline{g}) \psi_2^{-1}(n) dn. \end{align*} 

In particular, fixing $\psi$ to be the standard additive character, and then taking $\psi_1(x) = \psi_{\infty}(m x)$
and $\psi_2(x) = \psi_{\infty}(r x)$ for nonzero $F$-integers $m, r \in \mathcal{O}_F$, 
with $\vartheta(\gamma) = \left(\frac{c}{d} \right)^*$ as described (for each component) in \cite[$\S 2$]{Ge} and \cite{Pro}, 
we arrive at the simpler expression 
\begin{align*} W_{P_{f, \omega, \vartheta}}(\overline{g}) 
&= \int_{ {\bf{A}}_F/F }  f(\overline{g}) \psi_{\infty}( m x -r x) dx + \sum_{c \in \Omega(\Gamma)} 
\operatorname{Kl}_{\Gamma, \omega, \vartheta}(m, r; c) \cdot \mathcal{F}_{f, r, c}(\overline{g}) \end{align*}
where $\operatorname{Kl}_{\Gamma, \omega, \vartheta}(m, r; c)$ denotes the Kloosterman sum defined by 
\begin{align*} \operatorname{Kl}_{\Gamma, \omega, \vartheta}(m, r; c) 
&= \sum\limits_{\gamma \in \Gamma_{\infty} \backslash \Gamma_{c} / \Gamma_{\infty}  } 
\omega(\gamma) \cdot \overline{\vartheta (\gamma)} \cdot \psi_{\infty}( m  n_1(\gamma) + r  n_2(\gamma)) \\
&=  \sum\limits_{\gamma = \left( \begin{array}{cc} a & b \\ c & d \end{array} \right)  \in \Gamma_{\infty} \backslash \Gamma_{c} / \Gamma_{\infty}  } 
\omega(d) \left(\frac{c}{d} \right)^* \psi_{\infty}\left( \frac{m  a + r  d}{c} \right), \end{align*} 
and $\mathcal{F}_{f, ,r, c}$ the intertwining operator defined by 
\begin{align*} \mathcal{F}_{f, r, c}(\overline{g}) &= \int_{N_2({\bf{A}}_F)} f(w \cdot \underline{c} \cdot n \cdot \overline{g}) \psi_2^{-1}(n) dn
= \int_{{\bf{A}}_F} f \left( w \underline{c} \left( \begin{array}{cc} 1 & x \\~& 1 \end{array} \right) \overline{g}  \right) \psi_{\infty}(-r x) dx. \end{align*} 
That is, the Fourier-Whittaker coefficient of $P_{f, \omega, \vartheta}(\overline{g})$ at $r$ (with respect to the standard additive character) is described in this way. 
\end{proposition}

\begin{proof} 

Cf.~\cite[Proposition 2.5]{CPS}; the same calculation works here. 
Given $\overline{g} = (g, \zeta) \in \overline{G}({\bf{A}}_F)$, $\omega$, $u$, and
$f \in {S}(N_2({\bf{A}}_F) \backslash \overline{G}({\bf{A}}_F); \psi_1)$ as above, we compute 
\begin{align*} W_{P_{f, \omega, \vartheta}, \psi_2}(\overline{g}) 
&= \int_{N_2(F) \backslash N_2({\bf{A}}_F)} P_{f, \omega, \vartheta}(n \overline{g}) \psi_2^{-1}(n) dn 
= \int_{ {\bf{A}}_F/F } P_{f, \omega, \vartheta} \left( \left( \begin{array}{cc} 1 & x \\ ~Ê& 1 \end{array} \right) \overline{g} \right) \psi_2(-x)dx \\
&= \int_{ {\bf{A}}_F/F } \sum_{ \gamma \in \Gamma_{\infty} \backslash \Gamma} \omega(\gamma) \cdot \overline{\vartheta (\gamma)} \cdot
f \left( \gamma \left( \begin{array}{cc} 1 & x \\ ~Ê& 1 \end{array} \right) \overline{g}  \right) \cdot \psi_2(-x) dx , \end{align*}
which after using the Bruhat decomposition expansion 
$\Gamma = \Gamma_{\infty} \cup \bigcup_{c \in \Omega(\Gamma)} \Gamma_c$ (see \cite[$\S$ 2]{CPS}) equals 
\begin{align*} \int_{ {\bf{A}}_F/F } f \left( \left( \begin{array}{cc} 1 & x \\ ~Ê& 1 \end{array} \right) \overline{g}  \right) \psi_2(-x) dx 
+ \int_{ {\bf{A}}_F/F } \sum_{c \in \Omega(\Gamma)} \sum_{ \gamma \in \Gamma_{\infty} \backslash \Gamma_c} 
\omega(\gamma) \cdot \overline{ \vartheta (\gamma)} \cdot 
f \left( \gamma \left( \begin{array}{cc} 1 & x \\ ~Ê& 1 \end{array} \right) \overline{g}  \right) \cdot \psi_2(-x) dx. \end{align*}
Here, it is easy to see from the fact that $f \in \mathcal{S}(N_2({\bf{A}}_F) \backslash \overline{G}({\bf{A}}); \psi_1)$
that the first integral equals 
\begin{align*} \int_{ {\bf{A}}_F/F } f \left( \overline{g}  \right) \psi_1(x) \psi_2(-x) dx. \end{align*}
To compute the second integral, we use the identifications 
${\bf{A}}_F/F \cong N_2(F) \backslash N_2({\bf{A}}_F) \cong \Gamma_{\infty} \backslash N_2({\bf{A}}_F)$ to find 
\begin{align*} &\int_{ {\bf{A}}_F/F } \sum_{c \in \Omega(\Gamma)} \sum_{ \gamma \in \Gamma_{\infty} \backslash \Gamma_c} 
\omega(\gamma) \cdot \overline{ \vartheta (\gamma)} \cdot
f \left( \gamma \left( \begin{array}{cc} 1 & x \\ ~Ê& 1 \end{array} \right) \overline{g}  \right) \cdot \psi_2(-x) dx \\
&= \sum_{c \in \Omega(\Gamma)} \int_{ \Gamma_{\infty} \backslash N_2( {\bf{A}}_F) } 
\sum_{\gamma \in \Gamma_{\infty} \backslash \Gamma_c} \omega(\gamma) \cdot \overline{ \vartheta (\gamma)} \cdot
f( \gamma n \overline{g}) \cdot \psi_2^{-1}(n) dn \\ 
&= \sum_{c \in \Omega(\Gamma)} \sum_{ \gamma \in \Gamma_{\infty} \backslash \Gamma_c / \Gamma_{\infty}} 
\int_{ N_2( {\bf{A}}_F ) } f( \gamma n \overline{g} ) \psi_2^{-1}(n)dn, \end{align*}
which after decomposing each $\gamma \in \Gamma_{\infty} \backslash \Gamma_c / \Gamma_{\infty}$ in Bruhat form
as $\gamma = n_1(\gamma) \cdot w \cdot \underline{c} \cdot n_2(\gamma)$ and making a change of variables 
$n \rightarrow n_2(\gamma)^{-1} \cdot n$ is the same as the stated integral
\begin{align*} \sum_{ c \in \Omega(\Gamma) } \sum_{\gamma \in \Gamma_{\infty} \backslash \Gamma_c / \Gamma_{\infty} } 
\omega(\gamma) \cdot \overline{ \vartheta (\gamma)} \cdot \psi_1(n_1(\gamma)) \cdot \psi_2(n_2(\gamma)) 
\int_{ N_2( {\bf{A}}_F) } f(w \cdot \underline{c} \cdot n \cdot \overline{g}) \psi_2^{-1}(n)dn. \end{align*} \end{proof}

Let us now consider the latter Poincar\'e series above, i.e.~for a given nonzero $F$-integer $m \in \mathcal{O}_F$ fix $\psi_1(x) = \psi_{\infty}(mx)$,
and fix a suitable rapidly decaying Schwartz function $f \in \mathcal{S}(N_2({\bf{A}}_F) \backslash \operatorname{GL}_2({\bf{A}}_F); \psi_1)$, 
then consider the corresponding smooth Poincar\'e series $P_m$ on $g \in \operatorname{GL}_2({\bf{A}}_F)$ defined by
\begin{align*} P_m(g) := P_f(g) = \sum_{\Gamma_{\infty} \backslash \Gamma} f(\gamma g). \end{align*} 
Here, we take the multiplier $\vartheta$ in the definition above to be trivial, 
and drop the corresponding subscript from the notations, 
writing $\operatorname{Kl}_{\Gamma}(m, r; c) = \operatorname{Kl}_{\Gamma, \omega, 1}(m, r; c)$.
We now decompose our fixed pure tensor $\phi \in V_{\pi}$ into a linear combination of such Poincar\'e series as 
\begin{align*} \phi(g) &= \sum_m c_m(\phi) \cdot P_m(g), \quad c_m(\phi) \in {\bf{C}}^{\times} \end{align*}
so that our shifted convolution sum $(\ref{quadsum})$ can be decomposed accordingly into a linear combination 
\begin{align}\label{quadsumdecomp} \sum_{(n)} W_{\phi} \left( \left( \begin{array}{cc} \frac{q(n)}{Y_{\infty}}  & ~Ê\\ ~& 1 \end{array} \right) \right) 
&=  \sum_m c_m(\phi) \sum_{(n)} W_{P_m} \left( \left( \begin{array}{cc} \frac{q(n)}{Y_{\infty}} & ~\\Ê~Ê& 1 \end{array} \right) \right). \end{align}
Here, each coefficient 
\begin{align*} W_{P_m} \left( \left( \begin{array}{cc} \frac{q(n)}{Y_{\infty}} & ~Ê\\Ê~Ê& 1\end{array} \right) \right) 
&:= \int_{ {\bf{A}}_F/F } P_m \left( \left( \begin{array}{cc} 1 & x \\Ê~Ê& 1 \end{array} \right) 
\left( \begin{array}{cc} \frac{1}{Y_{\infty}} & ~Ê\\Ê~Ê& 1 \end{array} \right)  \right) \psi(-q(n)x)dx \\ 
&= \int_{I \cong [0,1]^d \subset F_{\infty}} P_m \left( \left( \begin{array}{cc} 1 & x_{\infty} \\ ~& 1 \end{array} \right) 
\left( \begin{array}{cc} \frac{1}{Y_{\infty}} & ~Ê\\ ~Ê& 1 \end{array} \right) \right) \psi(-q(n) x_{\infty}) dx_{\infty} \end{align*}
on the right hand side of $(\ref{quadsumdecomp})$ is the Fourier-Whittaker coefficient at $q(n)$
of \begin{align*} P_m\left( \left( \begin{array}{cc} \frac{1}{Y_{\infty}} & ~\\ ~Ê& 1\end{array} \right) \right)\end{align*} 
i.e.~so that we take $r = q(n)$, we use the second formula of Proposition 
\ref{PoincarŽ} to express $(\ref{quadsumdecomp})$ equivalently as 
\begin{equation}\begin{aligned}\label{QSD2} 
\sum_m c_m(\phi) \sum_{(n)} \left( \int_{ {\bf{A}}_F/F } f \left( \left( \begin{array}{cc}  \frac{1}{Y_{\infty}} & ~\\Ê~Ê&1 \end{array} \right) \right) \psi(mx - q(n)x) dx
+ \sum_{c \in \Omega(\Gamma)} \operatorname{Kl}_{\Gamma}(m, q(n); c) 
\mathcal{F}_{f, q(n), c} \left( \left( \begin{array}{cc} \frac{1}{Y_{\infty}} & ~\\Ê~Ê& 1 \end{array} \right) \right)  \right). \end{aligned}\end{equation}
Note that we are justified in making such a decomposition, as the Poincar\'e series we consider generate the closed subspace 
$L^2_{\operatorname{cont}}(\operatorname{GL}_2(F) \backslash \operatorname{GL}_2({\bf{A}}_F), \omega) \subset
L^2(\operatorname{GL}_2(F) \backslash \operatorname{GL}_2({\bf{A}}_F), \omega)$ of continuous forms. 
Here, we refer e.g.~to the classical discussions of spectral decompositions given in \cite[$\S$ 7.4]{Ku} and \cite{GJ} 
with the standard argument in \cite[Lemma 14.3, Corollary 14.4]{IK}.
We shall use a variation of the Sobolev norms argument above to deduce that the sum of inner products can be bounded
uniformly in terms of the spectral parameters of some fixed orthonormal basis of the ambient Hilbert space 
$L^2(\operatorname{GL}_2(F) \backslash \operatorname{GL}_2({\bf{A}}_F), \omega)$. 
We now argue that the integral term in this latter expression $(\ref{QSD2})$ (which typically vanishes by orthogonality) 
is negligible relative to the second term, so that it is enough to estimate the simpler sum 
\begin{align}\label{quadsumdecompred} \sum_m c_m(\phi) \sum_{(n)} 
\sum_{c \in \Omega(c)} \operatorname{Kl}_{\Gamma}(m, q(n); c) 
\mathcal{F}_{f, q(n), c} \left( \left( \begin{array}{cc} \frac{1}{Y_{\infty}} & ~\\Ê~Ê& 1 \end{array} \right) \right).\end{align}
Opening the Kloosterman sums $\operatorname{Kl}_{\Gamma}(m, q(n); c)$ and switching the order of summation, 
$(\ref{quadsumdecompred})$ is the same as 
\begin{align*} &\sum_m c_m(\phi) \sum_{(n)} \sum_{c \in \Omega(\Gamma)} 
\sum\limits_{\gamma = \left( \begin{array}{cc} a & b \\ c & d \end{array} \right) \in \Gamma_{\infty} \backslash \Gamma_c / \Gamma_{\infty}} \omega(d) 
\psi_{\infty} \left( \frac{m a + q(n) d}{c} \right) \mathcal{F}_{f, q(n), c} \left( \left( \begin{array}{cc} \frac{1}{Y_{\infty}}  & ~Ê\\Ê~Ê& 1 \end{array} \right) \right) \\
&= \sum_m c_m(\phi) \sum_{c \in \Omega(\Gamma)} 
\sum\limits_{\gamma = \left( \begin{array}{cc} a & b \\ c & d \end{array} \right) \in \Gamma_{\infty} \backslash \Gamma_c / \Gamma_{\infty}} \omega(d)
\psi_{\infty} \left( \frac{ma}{c} \right) \sum_{(n)} \psi_{\infty} \left( \frac{q(n) d}{c} \right) 
\mathcal{F}_{f, q(n), c} \left( \left( \begin{array}{cc} \frac{1}{Y_{\infty}}  & ~Ê\\Ê~Ê& 1 \end{array} \right) \right), \end{align*}
which after partitioning the $(n)$-sum into congruence classes $u \bmod c$ is the same as 
\begin{align}\label{quadsumdecompredcong} \sum_m c_m(\phi) \sum_{c \in \Omega(\Gamma)} 
\sum\limits_{\gamma = \left( \begin{array}{cc} a & b \\ c & d \end{array} \right) \in \Gamma_{\infty} \backslash \Gamma_c / \Gamma_{\infty}} \omega(d)
\psi_{\infty} \left( \frac{ma}{c} \right) \sum_{u \bmod c} \sum_{(n) \atop n \equiv u \bmod c} \psi_{\infty} \left( \frac{q(u) d}{c} \right) 
\mathcal{F}_{f, q(n), c} \left( \left( \begin{array}{cc} \frac{1}{Y_{\infty}}  & ~Ê\\Ê~Ê& 1 \end{array} \right) \right). \end{align}
Here, the sum over $u \bmod c$ denotes the sum over a full set of representatives for the classes $(\mathcal{O}_F/c\mathcal{O}_F)$. 
We now use Poisson summation (cf.~\cite[Lemma 1]{VB})
to evaluate the inner sum. That is, given $\mathcal{F}$ a sufficiently well-behaved Schwartz class function on 
$x_{\infty} \in F_{\infty} \cong {\bf{R}}^d$, we can use the Poisson summation formula 
\begin{align}\label{PS} \sum_{(n) \atop n \equiv u \bmod c} \mathcal{F}(n) 
&= \frac{1}{\vert c \vert } \sum_{(h)} \widehat{\mathcal{F}}\left( \frac{h}{c} \right) \psi_{\infty} \left( \frac{hu}{c} \right), \end{align} where 
\begin{align*} \widehat{\mathcal{F}}(x_{\infty}) 
&= \int_{F_{\infty} \cong {\bf{R}}^d} \mathcal{F}(z_{\infty}) \psi_{\infty}(-z_{\infty}x_{\infty}) dz_{\infty} \end{align*}
denotes the Fourier transform of $\mathcal{F}$. Applying $(\ref{PS})$ to the inner sum in $(\ref{quadsumdecompredcong})$ 
(as in \cite[(3.2)]{VB}) then gives us 
\begin{align}\label{quadsumdecompredcongPS} \sum_m c_m(\phi) \sum_{c \in \Omega(\Gamma)} 
\sum\limits_{\gamma = \left( \begin{array}{cc} a & b \\ c & d \end{array} \right) \in \Gamma_{\infty} \backslash \Gamma_c / \Gamma_{\infty}} \omega(d)
\psi_{\infty} \left( \frac{ma}{c} \right) \frac{1}{\vert c \vert } \sum_{u \bmod c} \sum_{(h)} \psi_{\infty} \left( \frac{q(u) d+ h u }{c} \right) 
\widehat{\mathcal{F}}_{f, \frac{q(h)}{c}, c} \left( \left( \begin{array}{cc} \frac{1}{Y_{\infty}}  & ~Ê\\Ê~Ê& 1 \end{array} \right) \right), \end{align}
where 
\begin{align*} &\widehat{\mathcal{F}}_{f, \frac{q(h)}{c}, c} \left( \left( \begin{array}{cc} \frac{1}{Y_{\infty}} & ~\\Ê~Ê& 1 \end{array} \right) \right) 
= \int_{F_{\infty} \cong {\bf{R}}^d} \mathcal{F}_{f, \frac{q(z_{\infty})}{c}, c} 
\left( \left( \begin{array}{cc} \frac{1}{Y_{\infty}} & ~\\ ~Ê& 1 \end{array} \right) \right) \psi_{\infty} \left( - h z_{\infty} \right) dz_{\infty}  \\
&= \int_{F_{\infty} \cong {\bf{R}}^d}  \left(  \int_{{\bf{A}}_F } f \left( w \cdot \underline{c} \cdot \left( \begin{array}{cc} 1 & x \\ ~& 1 \end{array} \right) 
\left( \begin{array}{cc} \frac{1}{Y_{\infty}} & ~ \\ ~& 1 \end{array} \right) \right) 
\psi_{\infty} \left( -\frac{ q(z_{\infty})}{c} \cdot x \right) dx \right) \psi_{\infty} \left( - h z_{\infty} \right) dz_{\infty} \end{align*}
can be assumed to be absolutely convergent and rapidly decreasing in if we choose we sufficiently well-behaved Schwartz functions
$f \in \mathcal{S}(N_2({\bf{A}}_F) \backslash \operatorname{GL}_2({\bf{A}}_F); \psi_1)$. 

Let us now consider each quadratic Gauss sum in the latter expression $(\ref{quadsumdecompredcongPS})$, 
i.e.~opening up the quadratic polynomials $q(u) = ru^2+su + t$ in the latter expression to find the sums
\begin{equation}\begin{aligned}\label{OGS} \sum_{u \bmod c} \psi_{\infty} \left(  \frac{q(u) d + h u}{c} \right) 
&= \sum_{u \bmod c} \psi_{\infty} \left( \frac{ d r u^2 + (ds + h)u + d(t+h) }{c} \right) \\ 
&= \psi_{\infty} \left( \frac{dt}{c} \right) \sum_{u \bmod c} \psi_{\infty} \left( \frac{dr u^2 + (ds + h)u}{c} \right) \\
&= \psi_{\infty} \left( \frac{dt}{c} \right) \sum\limits_{ {k = \gcd(dr, c) \atop dr = k d' r', c = k c'} \atop u \bmod c', k \overline{k} \equiv 1 \bmod c' }
\psi_{\infty} \left( \frac{ d' r' u^2 + \overline{k}(ds+h)u}{c'} \right) \end{aligned}\end{equation}
Here, we rewrite the latter sum in such a way that $(d' r', c')=1$.
That is, the sum runs over classes $u \bmod c'$, where $c' = c/ k = c/\gcd(c, dr)$.
We then write $\overline{r}'$ to denote the inverse class of $r' \bmod c'$, and later put $\overline{r} = \overline{r}' k$. 
Observe that since $\psi_{\infty}$ is the archimedean component of the standard additive character 
$\psi = \otimes_v \psi_v$ on ${\bf{A}}_F/F$, it is defined on any $z_{\infty} = (z_{\infty, j})_{j=1}^d \in F_{\infty} \cong {\bf{R}}^d$ by  
\begin{align*} \psi_{\infty}(z_{\infty}) &= e(\operatorname{Tr}(z_{\infty})) = e \left( \sum_{j=1}^{[F:{\bf{Q}}]} z_{\infty, j} \right) =
\exp \left(2 \pi i \cdot \sum_{j=1}^{[F:{\bf{Q}}]} z_{\infty, j} \right). \end{align*}
Here, $\operatorname{Tr}$ denotes sum over real embeddings $\sigma_j: F \rightarrow {\bf{R}}$.
We then see the inner sum in $(\ref{OGS})$ equals 
\begin{align}\label{OGS2} \sum_{u \bmod c'} \psi_{\infty} \left( \frac{d'r' u^2 + \overline{k} (ds + h)u}{c'} \right) 
&= \sum_{u \bmod c'} e \left( \operatorname{Tr} \left( \frac{d'r' u^2 + \overline{k}(ds + h) u }{c'} \right) \right). \end{align}
Writing $\alpha_j = \sigma_j(\alpha)$ to denote the image under $\sigma_j: F \rightarrow {\bf{R}}$ of $\alpha \in F$, 
we can then use distributivity to justify interchanging the sum and the product on the right of $(\ref{OGS2})$, which gives us 
\begin{align*} \sum_{u \bmod c'} \psi_{\infty} \left( \frac{d'r' u^2 + \overline{k} (ds + h)u}{c'} \right) 
&= \prod_{j=1}^{[F:{\bf{Q}}]} \sum_{u_j \bmod c_j'} e \left( \frac{d_j' r_j' u_j^2 + \overline{k}_j (d_j s_j + h_j) u_j}{c_j'} \right). \end{align*}
Now, we can evaluate each of the quadratic Gauss sums in his latter expression explicitly via a direct calculation 
as given in \cite[Lemma 7]{VB} (for instance). That is, assuming without loss of generality that each
$c_j'$ is divisible by $4$ after embedding into a larger level structure if needed (cf.~\cite[$\S 3$]{VB}), 
we have for each embedding $\sigma_j$ the quadratic Gauss sum formula 
\begin{equation}\begin{aligned}\label{GSlemma} 
&\sum_{u_j \bmod c_j'} e \left( \frac{d_j' r_j' u_j^2 + \overline{k}_j (d_j s_j + h_j) u_j}{c_j'} \right) \\
&= \begin{cases} 0 &\text{if $2 \nmid \overline{k}_j (d_j s_j +h_j)$} \\
(1 + i) \cdot \sqrt{c_j'} \cdot \left( \frac{c_j'}{d_j' r_j'} \right) \cdot \epsilon_{d_j' r_j'}^{-1} \cdot e \left( \frac{- \overline{d_j' r_j' k_j^2}  (d_j s_j + h_j)^2/4}{c_j'} \right)
&\text{if $2 \mid \overline{k}_j (d_j s_j + h_j)$} \end{cases}, \end{aligned}\end{equation}
where $\left( \frac{\cdot}{\cdot} \right)$ denotes the Legendre symbol, and 
\begin{align*} \epsilon_{Q} &= \begin{cases} 1 &\text{if $Q \equiv 1 \bmod 4$} \\ i &\text{if $Q \equiv 3 \bmod 4$}. \end{cases} \end{align*}
Hence, interpreting the cases in terms of the norm homomorphism, we see that $(\ref{OGS})$ can be evaluated as 
\begin{align*} &\sum_{u \bmod c} \psi_{\infty} \left( \frac{q(u)d + h u}{c}  \right)
= \psi_{\infty} \left( \frac{td}{c} \right) \sum_{u \bmod c' = c/k \atop t = \gcd(dr, c)} \psi_{\infty} \left( \frac{d' r' u^2 + \overline{k} (ds+h)u}{c'} \right) \\ 
&= \psi_{\infty} \left( \frac{d t}{c} \right) \cdot \begin{cases} 0 &\text{if $2 \nmid {\bf{N}}(\overline{k}(ds+h))$} \\ 
\prod_{j=1}^{[F:{\bf{Q}}]} 
(1 + i) \cdot \sqrt{c_j'} \cdot \left( \frac{c_j'}{d_j' r_j'} \right) \cdot \epsilon_{d_j' r_j'}^{-1} \cdot 
e \left( \frac{- \overline{d_j' r_j' k_j^2} (d_j s_j + h_j)^2/4}{c_j'} \right)
&\text{if $2 \mid {\bf{N}}( \overline{k}(ds+h))$} \end{cases}. \end{align*}
It follows (cf.~\cite[(3.5)]{VB}) after switching the order of summation that  
$(\ref{quadsumdecompredcongPS})$ can be evaluated explicitly as 
\begin{equation}\begin{aligned}\label{substitute} 
&\sum_m c_m(\phi) \sum_{c \in\Omega(\Gamma)} 
\sum\limits_{\gamma = \left( \begin{array}{cc} a & b \\ c & d  \end{array} \right) \in \Gamma_{\infty} \backslash \Gamma_c / \Gamma_{\infty}} 
\omega(d) \psi_{\infty} \left( \frac{ma}{c} \right) \frac{1}{\vert c \vert } 
\sum_{(h)} \widehat{\mathcal{F}}_{f, \frac{q(h)}{c}, c} \left( \left( \begin{array}{cc} \frac{1}{Y_{\infty}}  & ~\\Ê~Ê& 1 \end{array} \right) \right)
\sum_{u \bmod c} \psi_{\infty} \left( \frac{q(u) d  + h u}{c} \right) \\ 
&= \sum_m c_m(\phi) \sum_{c \in\Omega(\Gamma)} 
\sum\limits_{\gamma = \left( \begin{array}{cc} a & b \\ c & d  \end{array} \right) \in \Gamma_{\infty} \backslash \Gamma_c / \Gamma_{\infty}} 
\omega(d) \psi_{\infty} \left( \frac{ma}{c} \right) \frac{1}{\vert c \vert }  \sum\limits_{(h) \atop \vert \overline{k}(ds + h) \vert \equiv 0 \bmod 2 }  
\widehat{\mathcal{F}}_{f, \frac{q(h)}{c}, c} \left( \left( \begin{array}{cc} \frac{1}{Y_{\infty}}  & ~\\Ê~Ê& 1 \end{array} \right) \right)
\psi_{\infty} \left( \frac{d t}{c} \right) \\ &\times \prod_{j=1}^{[F:{\bf{Q}}]} 
(1 + i) \cdot \sqrt{c_j'} \cdot \left( \frac{c_j'}{d_j' r_j'} \right) \cdot \epsilon_{d_j' r_j'}^{-1} 
\cdot e \left( \frac{- \overline{d_j' r_j' k_j^2} (d_j s_j + h_j)^2/4}{c_j'} \right) \\
&= \sum_m c_m(\phi) \sum_{c \in\Omega(\Gamma)} 
\sum\limits_{\gamma = \left( \begin{array}{cc} a & b \\ c & d  \end{array} \right) \in \Gamma_{\infty} \backslash \Gamma_c / \Gamma_{\infty}} \omega(d)
\sum\limits_{(h) \atop \vert \overline{k}(ds + h) \vert \equiv 0 \bmod 2 }  
\frac{\vert k \vert (1+i)^{[F:{\bf{Q}}]}  }{\vert c \vert^{\frac{1}{2}}} \cdot 
\widehat{\mathcal{F}}_{f, \frac{q(h)}{c}, c} \left( \left( \begin{array}{cc} \frac{1}{Y_{\infty}}  & ~\\Ê~Ê& 1 \end{array} \right) \right) \\
&\times \left( \prod_{j=1}^{[F:{\bf{Q}}]} \left( \frac{c_j'}{d_j' r_j'} \right) \cdot \epsilon_{d_j' r_j'}^{-1} \right)
\psi_{\infty} \left( \frac{ma}{c} \right) \psi_{\infty} \left( \frac{d t }{c} \right) 
\psi_{\infty} \left( \frac{- \overline{dr}(d s +h)^2/4}{c} \right) \\ 
&= \sum_m c_m(\phi) \sum_{(h)} \sum_{c \in\Omega(\Gamma)} \psi_{\infty}\left( - \frac{s \overline{r} h/2}{c} \right)
\frac{\vert c' \vert^{\frac{1}{2}}  (1+i)^{[F:{\bf{Q}}]}  }{\vert c \vert } \cdot 
\widehat{\mathcal{F}}_{f, \frac{q(h)}{c}, c} \left( \left( \begin{array}{cc} \frac{1}{Y_{\infty}} & ~Ê\\ ~Ê& 1 \end{array} \right) \right) \\ &\times
\sum\limits_{ { \gamma \in \Gamma_{\infty} \backslash \Gamma_c / \Gamma_{\infty} \atop
\gamma = \left( \begin{array}{cc} a & b \\ c & d  \end{array} \right) } \atop \vert \overline{k}(ds + h) \vert \equiv 0 \bmod 2} \omega(d) 
\left( \frac{c}{dr} \right) \left( \prod_{j=1}^{[F:{\bf{Q}}]} \epsilon_{d_j r_j}^{-1} \right) \psi_{\infty} \left( \frac{a(m-\overline{r} h^2/4)}{ c} \right)
\psi_{\infty} \left( \frac{ d(t - \overline{r} s^2/4) }{c} \right).\end{aligned}\end{equation} 
Here, we use the elementary congruence relations 
\begin{equation*}\begin{aligned} &\frac{dt}{c} - \frac{\overline{d' r' k^2}(ds + h)^2/4}{c'} = \frac{dt - \overline{dr}(ds + h)^2/4}{c} 
= \frac{d(t - \overline{r} s^2/4) - \overline{r} sh/2 - \overline{d r} h^2/4}{c} \end{aligned}\end{equation*}
to deduce that 
\begin{align*} \psi_{\infty} \left( \frac{dt}{c}  \right) \psi_{\infty} \left( -  \frac{\overline{d' r' k^2}(ds+h)^2/4 }{c'} \right) 
&= \psi_{\infty} \left( \frac{d(t - \overline{r}s^2/4)}{c} \right) 
\psi_{\infty} \left(- \frac{a \overline{r} h^2/4}{ c} \right) \psi_{\infty} \left( - \frac{2 \overline{r} sh/4}{c} \right). \end{align*}
Switching the order of summation again, we then get the expression  
\begin{equation}\begin{aligned}\label{substitute2} 
&\sum_m c_m(\phi)  \sum_{c \in\Omega(\Gamma)} 
\frac{\vert c' \vert^{\frac{1}{2}}  (1+i)^{[F:{\bf{Q}}]}  }{\vert c \vert } \cdot
\sum\limits_{  \gamma = \left( \begin{array}{cc} a & b \\ c & d  \end{array} \right) \in \Gamma_{\infty} \backslash \Gamma_c / \Gamma_{\infty} } 
\omega(d) \left( \frac{c}{dr} \right) \left( \prod_{j=1}^{[F:{\bf{Q}}]} \epsilon_{d_j r_j}^{-1} \right) \psi_{\infty} \left( \frac{am}{ c} \right) 
\psi_{\infty} \left( \frac{ d(t - \overline{r} s^2/4) }{c} \right) \\ &\times 
\sum_{(h)  \atop \vert \overline{k}(ds + h) \vert \equiv 0 \bmod 2} 
\widehat{\mathcal{F}}_{f, \frac{q(h)}{c}, c} \left( \left( \begin{array}{cc} \frac{1}{Y_{\infty}} & ~Ê\\ ~Ê& 1 \end{array} \right) \right)
\psi_{\infty} \left( \frac{- a \overline{r} h^2/4 - s \overline{r} h/2}{c} \right), \end{aligned}\end{equation} 
which after opening up the Fourier transform and switching the order of summation is the same as 
\begin{equation}\begin{aligned}\label{substitute3} 
&\sum_m c_m(\phi)  \sum_{c \in\Omega(\Gamma)} 
\frac{\vert c' \vert^{\frac{1}{2}}  (1+i)^{[F:{\bf{Q}}]}  }{\vert c \vert } \cdot
\sum\limits_{  \gamma = \left( \begin{array}{cc} a & b \\ c & d  \end{array} \right) \in \Gamma_{\infty} \backslash \Gamma_c / \Gamma_{\infty} } 
\omega(d) \left( \frac{c}{dr} \right) \left( \prod_{j=1}^{[F:{\bf{Q}}]} \epsilon_{d_j r_j}^{-1} \right) \psi_{\infty} \left( \frac{am}{ c} \right) 
\psi_{\infty} \left( \frac{ d(t - \overline{r} s^2/4) }{c} \right) \\ &\times 
\sum_{(h)  \atop \vert \overline{k}(ds + h) \vert \equiv 0 \bmod 2} \psi_{\infty} \left( \frac{- a \overline{r} h^2/4 - s \overline{r} h/2}{c} \right) 
\int_{ {\bf{A}}_F } f \left( w \cdot \underline{c} \cdot \left( \begin{array}{cc} 1&x\\~&1 \end{array} \right) \cdot
\left( \begin{array}{cc} \frac{1}{Y_{\infty}}&~\\~&1 \end{array} \right) \right) 
\int_{F_{\infty}}  \psi_{\infty} \left( \frac{- q(z_{\infty}) x_{\infty} - h c z_{\infty}}{c} \right) dz_{\infty} dx. \end{aligned}\end{equation}

Now for each $x_{\infty} = (x_{\infty, j})_{j=1}^d \in F_{\infty}$ with norm $\vert x_{\infty} \vert \neq 0$ (and hence with each component $x_{\infty, j} \neq 0$), 
we can evaluate the inner integral in this latter expression of $(\ref{substitute3})$ as 
\begin{align*} \int_{F_{\infty}}  \psi_{\infty} \left( \frac{- q(z_{\infty}) x_{\infty} - h c z_{\infty}}{c} \right) dz_{\infty} &=
\int_{F_{\infty}} \psi_{\infty} \left( - \left( \frac{r x_{\infty} }{c} \right) z_{\infty}^2 - \left( \frac{s x_{\infty} + hc}{c} \right) z_{\infty} - \left( \frac{t x_{\infty} }{c} \right) \right) d z_{\infty} \end{align*}
as 
\begin{align*} \int_{F_{\infty}} \psi_{\infty} \left( - \left( \frac{rx_{\infty}}{c} \right) z_{\infty}^2 - \left( \frac{s x_{\infty} + hc}{c} \right) z_{\infty} - \left( \frac{t x_{\infty} }{c} \right) \right) d z_{\infty} 
&= \left\vert  \frac{c}{2 i r x_{\infty}} \right\vert^{\frac{1}{2}} \cdot \psi_{\infty} \left( \frac{(s^2 - 4 r t) x_{\infty}}{4 r c}  \right) \cdot \psi_{\infty} \left( \frac{2 s x_{\infty} h c + h^2 c^2}{4 crx_{\infty}} \right) \end{align*}
by the integral formula 
\begin{align*} \int_{-\infty}^{\infty} e^{- (A(x_{\infty, j}) z_{\infty, j}^2 + B(x_{\infty, j}) z_{\infty, j} + C(x_{\infty, j}))} d z_{\infty, j} 
&= \sqrt{ \frac{\pi}{A(x_{\infty, j})}  } \cdot e^{ \frac{B(x_{\infty, j})^2 - 4 A(x_{\infty, j}) C(x_{\infty, j})}{4 A(x_{\infty, j})}} \end{align*}
applied to each component of $z_{\infty} = (z_{\infty, j})_{j=1}^d \in F_{\infty} \in {\bf{R}}^d$ (in the notations described above) with 
\begin{align*} A(x_{\infty, j}) &= 2 \pi i \left( \frac{r_j x_{\infty, j}}{c_j} \right), \quad B(x_{\infty, j}) = 2 \pi i \left( \frac{s_j x_{\infty, j} + h_j c_j}{c_j} \right), 
\quad C(x_{\infty, j}) = 2 \pi i \left( \frac{t_j x_{\infty, j}}{c_j} \right). \end{align*}
In this way, we argue that for some suitable Schwartz function 
$f' \in \mathcal{S}(N_2({\bf{A}}_F) \backslash \overline{G}({\bf{A}}_F); \psi_1)$ modulo $N_2(F_{\infty})$, we can reduce to bounding the simpler expression 
\begin{equation}\begin{aligned}\label{substitute4} 
&\sum_m c_m(\phi)  \sum_{c \in\Omega(\Gamma)} 
\frac{\vert c' \vert^{\frac{1}{2}}  (1+i)^{[F:{\bf{Q}}]}  }{\vert c \vert } \cdot
\sum\limits_{  \gamma = \left( \begin{array}{cc} a & b \\ c & d  \end{array} \right) \in \Gamma_{\infty} \backslash \Gamma_c / \Gamma_{\infty} } 
\omega(d) \left( \frac{c}{dr} \right) \left( \prod_{j=1}^{[F:{\bf{Q}}]} \epsilon_{d_j r_j}^{-1} \right) \psi_{\infty} \left( \frac{am}{ c} \right) 
\psi_{\infty} \left( \frac{ d(t - \overline{r} s^2/4) }{c} \right) \\ &\times 
\int_{ {\bf{A}}_F } f' \left( w \cdot \underline{c} \cdot \left( \begin{array}{cc} 1&x\\~&1 \end{array} \right) \cdot 
\left( \begin{array}{cc} \frac{1}{Y_{\infty}}&~\\~&1 \end{array} \right) \right) \psi_{\infty} \left( \frac{\Delta}{4 r c} \cdot x  \right) dx. \end{aligned}\end{equation}
Indeed, after switching the order of summation, the inner $(h)$-sum in the expression $(\ref{substitute3})$ is equivalent to 
\begin{equation*}\begin{aligned} \int_{{\bf{A}}_F } f \left(w  \underline{c}  \left( \begin{array}{cc} 1 & x \\ ~& 1\end{array} \right) 
\left( \begin{array}{cc} \frac{1}{Y_{\infty}} & ~ \\ ~Ê& 1 \end{array} \right) \right) \left\lbrace 
\sum\limits_{(h) \atop \vert \overline{k} (ds + h) \vert \equiv 0 \bmod 2} 
\psi_{\infty} \left( \frac{  - a \overline{r} h^2/4 - s \overline{r} h/2 }{c}  \right) 
\int_{F_{\infty}} \psi_{\infty} \left(  \frac{ -q(z_{\infty}) x_{\infty} - hc z_{\infty} }{c} \right) d z_{\infty} \right\rbrace dx, \end{aligned}\end{equation*}
which we argue can be approximated in terms of the $\vert x_{\infty} \vert \neq 0$ contributions and evaluated as 
\begin{equation*}\begin{aligned}
&\int\limits_{x \in {\bf{A}}_F \atop \vert x_{\infty} \vert \neq 0} f \left(w  \underline{c}  \left( \begin{array}{cc} 1 & x \\ ~& 1\end{array} \right) 
\left( \begin{array}{cc} \frac{1}{Y_{\infty}} & ~ \\ ~Ê& 1 \end{array} \right) \right) \\ &\times \left\lbrace \left\vert \frac{c}{2 ir x_{\infty}} \right\vert^{\frac{1}{2}}
\sum\limits_{(h) \atop \vert \overline{k} (ds + h) \vert \equiv 0 \bmod 2} 
\psi_{\infty} \left( \frac{  - a \overline{r} h^2/4 - s \overline{r} h/2 }{c}  \right) \psi_{\infty} \left( \frac{shc/2}{cr} \right) 
\psi_{\infty} \left( \frac{h^2 c^2}{4cr} \right) \right\rbrace \psi_{\infty} \left( \frac{\Delta}{4 r c} \right) dx. \end{aligned}\end{equation*} 
We argue that the inner $(h)$-sum in this latter expression can be approximated by some Gaussian integral and treated as a constant. 
Now, to re-interpret ($\ref{substitute4}$) in the style of \cite[$\S$ 3]{VB}, we now make two observations. 
The first is that we can make a change of variables $c \rightarrow c'' = 4 cr$ to relate $(\ref{substitute4})$ to the simpler sum 
\begin{equation}\begin{aligned}\label{substitute5} 
&\sum_m c_m(\phi)  \sum_{c'' \in\Omega(\Gamma)} 
\frac{\vert c' \vert^{\frac{1}{2}}  (1+i)^{[F:{\bf{Q}}]}  }{\vert c'' \vert} \\ &\times
\sum\limits_{  \gamma = \left( \begin{array}{cc} a'' & b'' \\ c'' & d''  \end{array} \right)
 \in \Gamma_{\infty} \backslash \Gamma_{c''} / \Gamma_{\infty} }  \omega(d'')
\left( \frac{c''}{d''} \right) \left( \prod_{j=1}^{[F:{\bf{Q}}]} \epsilon_{d_j''}^{-1} \right) \psi_{\infty} \left( \frac{a'' m}{ c'' } \right) 
\psi_{\infty} \left( - \frac{ d'' \Delta }{c''} \right) 
\mathcal{F}_{f', -\Delta, c''} \left( \left( \begin{array}{cc} \frac{1}{Y_{\infty}}  &  ~\\Ê~Ê& 1 \end{array} \right) \right), \end{aligned}\end{equation}
again for some suitable function $f' \in \mathcal{S}(N_2( {\bf{A}}_F) \backslash \overline{G}({\bf{A}}_F); \psi_1)$. 
We then see that the inner sum can be related to the generalized Kloosterman sum of modulus $4 rc$, central character $\omega$, 
and half-integral weight theta multiplier $\vartheta$ given by the expansion 
\begin{align*} \operatorname{Kl}_{\Gamma, \omega, \vartheta}(4 rm, - \Delta; 4 r c) &:= 
\sum\limits_{\gamma = \left( \begin{array}{cc} a'' & b'' \\ c'' & d''  \end{array} \right) \in \Gamma_{\infty} \backslash \Gamma_{4 rc} / \Gamma_{\infty}}
\omega(d'') \left( \frac{c''}{d''} \right) \left( \prod_{j=1}^{[F:{\bf{Q}}]} \epsilon_{d_j'' }^{-1} \right) \psi_{\infty} \left( \frac{ a'' 4 rm}{c''} \right) 
\psi_{\infty} \left( \frac{- d'' \Delta }{ c'' } \right).\end{align*}
Here, we take for granted the description given in Gelbart \cite[Proposition 2.16]{Ge} (for instance), 
or the explicit classical realization described in Proskurin \cite{Pro} (which carries over component-wise) of $\vartheta$. 
Writing $\mathcal{P}_m$ to denote the corresponding Poincar\'e series defined on $\overline{g} \in \overline{G}({\bf{A}}_F)$ by 
\begin{align*} \mathcal{P}_m(\overline{g}) = P_{f', \omega, \vartheta} (\overline{g}) 
= \sum_{\gamma \in \Gamma_{\infty} \backslash \Gamma} \omega(\gamma) \cdot \overline{\vartheta(\gamma)} \cdot f'(\gamma \overline{g}) \end{align*} 
for each $m$, it is then simple to deduce from $(\ref{substitute})$ and $(\ref{substitute2})$ that it is enough to bound the sum 
\begin{align*} \sum_{m} c_m(\phi) \cdot 
W_{\mathcal{P}_m} \left( \left( \begin{array}{cc} - \frac{\Delta}{Y_{\infty}} & ~\\Ê~Ê& 1 \end{array} \right) \right) \end{align*}
to derive a bound for the initial sum $(\ref{quadsum})$, where for each $m$ we write $W_{\mathcal{P}_m}$ to denote the unipotent integral 
\begin{equation*}\begin{aligned}
W_{\mathcal{P}_m} \left( \left( \begin{array}{cc} -\frac{\Delta}{Y_{\infty}} & ~\\Ê~Ê& 1 \end{array} \right) \right) &= 
W_{\mathcal{P}_m} \left( \left( \begin{array}{cc} -\frac{\Delta}{Y_{\infty}} & ~\\Ê~Ê& 1 \end{array} \right), 1 \right) \\ 
&=\int_{ {\bf{A}}_F/F } \mathcal{P}_m \left( \left( \begin{array}{cc} 1 & x \\Ê~Ê& 1 \end{array} \right) 
\left( \begin{array}{cc} \frac{1}{Y_{\infty}} & ~\\Ê~Ê& 1 \end{array} \right), 1 \right) \psi(-\Delta x) dx \\ 
&= \int_{ I \cong [0,1]^d \subset F_{\infty} } \mathcal{P}_m \left( \left( \begin{array}{cc} 1 & x_{\infty} \\Ê~Ê& 1 \end{array} \right) 
\left( \begin{array}{cc} \frac{1}{Y_{\infty}} & ~\\Ê~Ê& 1 \end{array} \right), 1 \right) \psi(-\Delta x_{\infty} ) dx_{\infty}. \end{aligned}\end{equation*}
But this reduces us to a special case of the proof of Theorem \ref{SCS}, 
with the totally positive nonzero $F$-integer $-\Delta$ replacing $q$. That is, we decompose each of the Poincar\'e series $P_m$
spectrally, then pass to unipotent integrals to bypass the Kuznetsov trace formula. Since the level is divisible
by the leading coefficient $r$, we use Weyl's law to deduce that there will be roughly $ \ll_{\pi} {\bf{N}} r$ contributions. 
Applying the previous argument to the spectral decomposition(s) of the Poincar\'e series then gives us the bound 
\begin{align*} \sum_{(n)} W_{\phi} \left( \left( \begin{array}{cc} \frac{q(n)}{Y_{\infty}} & ~\\ ~Ê& 1 \end{array} \right) \right) &\ll
\sum_m c_m(\phi) \cdot W_{\mathcal{P}_m} \left( \left( \begin{array}{cc} -\frac{\Delta}{Y_{\infty}} & ~\\Ê~Ê& 1 \end{array}\right) \right)
\ll_{\pi, \varepsilon} Y^{\frac{1}{4}} \cdot {\bf{N}}r \cdot {\bf{N}} \Delta^{\delta_0 - \frac{1}{2}}
\left( \frac{ {\bf{N}} \Delta  }{Y} \right)^{\frac{1}{2} - \frac{\theta_0}{2}  - \varepsilon}. \end{align*}
Here, we have to multiply in the factor of $Y^{\frac{1}{4}}$ at the last step to compensate for the fact that
the Fourier expansions (and hence Fourier coefficients) of genuine parallel half-integral weight forms evaluated
at the matrix $\operatorname{diag}( 1/Y_{\infty}, 1)$ are proportional to $Y^{-\frac{1}{4}} = \vert Y_{\infty} \vert^{-\frac{1}{4}}$.

\section{Iwasawa main conjectures for $\operatorname{GL}_2$ over CM fields}

Let $F$ be a totally real number field of degree $d = [F: {\bf{Q}}]$, integers $\mathcal{O}_F$, and adeles ${\bf{A}}_F$.
Let $\pi$ be a cuspidal $\operatorname{GL}_2({\bf{A}}_F)$-automorphic representation of conductor 
$c(\pi) \subset \mathcal{O}_F$ and unitary central character $\omega = \omega_{\pi}$. 
Let $K$ be a totally imaginary quadratic extension of $F$ of relative discriminant $\mathfrak{D} = \mathfrak{D}_{K/F} \subset \mathcal{O}_F$. 
Fix a prime $\mathfrak{p} \mid p$ in $F$ of residue degree $\delta = \delta_{\mathfrak{p}} = [F_{\mathfrak{p}}: {\bf{Q}}_p]$. 
We explain in this self-contained section how to deduce many cases of the Iwasawa-Greenberg main conjectures for $\pi$ in the maximal pro-$p$ 
abelian extension of $K$ unramified outside of $\mathfrak{p}$ when $\pi$ corresponds to a holomorphic Hilbert modular form of parallel weight two. 
We then explain how to use such results together with the nontriviality of the corresponding $p$-adic $L$-functions 
to derive bounds for Mordell-Weil ranks of abelian varieties in this ${\bf{Z}}_p^r$-extension of $K$, where $r = \delta + 1$. In particular, 
we obtain results for the cyclotomic ${\bf{Z}}_p$-extension of $K$ without using a new Euler system 
construction, and hence without generalizing the construction of Kato \cite{Ka} to number fields.

\subsubsection*{Iwasawa main conjectures} 

We first describe some relevant divisibilities for the so-called $r$-variable Iwasawa-Greenberg 
main conjecture (see \cite[Conjecture 1]{XW}) that can be deduced from various works in the ``basechange anticyclotomic variables".
We assume $\pi = \otimes_v \pi_v$ is associated to a $p$-ordinary cuspidal Hilbert eigenform 
of parallel weight $2$ and trivial character.\footnote{Note that we also could consider the more general setting described in 
\cite[Conjecture]{XW} here, but at the expense of clarity.} We then have by constructions of Carayol \cite{Ca}, Taylor \cite{Ta}, and Wiles \cite{Wi}
a Galois representation $\rho_{\pi}: G_F := \operatorname{Gal}(\overline{{\bf{Q}}}/F) \longrightarrow \operatorname{GL}_2(\mathcal{O})$ 
subject to the usual conditions so that $L(s, \rho_{\pi}) = L(s, \pi)$. Let us use the setup of \cite[$\S 1$]{XW}, and hence assume the following

\begin{hypothesis}\label{XW} The following conditions are met: \\

\begin{itemize}

\item[(1)] The fixed prime $p$ is not ramified in $F$, and each prime $\mathfrak{p} \mid p \subset \mathcal{O}_F$ splits in $K$. \\

\item[(2)] The totally imaginary quadratic extension $K/F$ is not contained in the narrow class field of $F$. \\

\item[(3)] The residual representation $\overline{\rho}_{\pi}$ of $\rho_{\pi}$ is irreducible in the sense of \cite[(irred)]{XW}. \\

\item[(4)] The $\mathcal{O}_{L}^{\times}$-valued characters giving the actions of $G_{F_{\mathfrak{p}}}$ on $V_{\mathfrak{p}}$ and 
$V_{\mathfrak{p}} / V_{\mathfrak{p}}^{+}$ are distinct in the sense of \cite[(dist)]{XW}. Hence according to \cite[$\S 1.1$]{XW}, for each prime 
$\mathfrak{p} \mid p \subset \mathcal{O}_F$, the restriction of $\rho_{\pi}$ to the decomposition group 
$G_{F_{\mathfrak{p}}} \cong \operatorname{Gal}(\overline{\bf{Q}}_p/ F_{\mathfrak{p}})$ 
is isomorphic to some upper triangular representation $V_{\mathfrak{p}}$ admitting a distinguished one-dimensional subspace $V_{\mathfrak{p}}^{+}$ 
with proscribed Galois action. Fixing $L$ a finite extension of ${\bf{Q}}_p$ which is sufficiently large to contain the Hecke field ${\bf{Q}}(\pi)$, 
we assume this condition on the $\mathcal{O}_L^{\times}$-valued characters giving the actions of $G_{F_{\mathfrak{p}}}$. \end{itemize} \end{hypothesis} 

Let $K_{\infty}$ denote the ${\bf{Z}}_p^r$-extension of $K$, this being the compositum of the anticyclotomic ${\bf{Z}}_p^{\delta}$-extension
of $K$ with the cyclotomic ${\bf{Z}}_p$-extension of $K$. In general, for each prime $\mathfrak{p}$ dividing $ p$ in $\mathcal{O}_F$, 
we can associate to the Galois representation $\rho_{\pi} = V$ a Selmer group $\operatorname{Sel}(\pi, K_{\infty})$. 
In the setting where $\pi$ is associated to an abelian variety $A/F$ (as described below), 
this coincides with the classical $\mathfrak{p}$-primary Selmer group of $A$ over $K_{\infty}$.
The first part of the Iwasawa-Greenberg main conjecture for $\operatorname{GL}_2$ over $K$ asserts that the Pontryagin dual 
$X(\pi, K_{\infty})$ of $\operatorname{Sel}(\pi, K_{\infty})$ has the structure of a finitely generated torsion $\mathcal{O}[[G]]$-module, 
in which case it has a characteristic power series. Let us write  
$\operatorname{char}(X(\pi, K_{\infty})) = \operatorname{char}_{\mathcal{O}[[G]]}(X(\pi, K_{\infty})) \in \mathcal{O}[[G]]$
to denote this characteristic power series. We know thanks to various constructions (among them the one given in \cite{XW}, 
see also \cite{Da}, \cite{Di}, \cite{Har}, \cite{Hi91}, and \cite{Pa}) that there exists for each $\mathfrak{p} \mid p$  a $p$-adic $L$-function 
$\mathcal{L}_{\mathfrak{p}}(\pi, K_{\infty}) \in \mathcal{O}[[G]]$. The main conjecture for $\operatorname{GL}_2$ over $K$ then posits that we have an equality 
of principal ideals $\left( \mathcal{L}_{\mathfrak{p}}(\pi, K_{\infty}) \right) = \left( \operatorname{char}(X(\pi, K_{\infty}))\right)$ in $\mathcal{O}[[G]]$.

\begin{theorem}\label{IMC} 

Let $p\geq 5$ be a rational prime, and let $\pi = \otimes_v \pi_v$ be a cuspidal automorphic representation of $\operatorname{GL}_2({\bf{A}}_F)$ 
associated to a Hilbert modular eigenform of parallel weight $2$, level $c(\pi)$, and trivial nebentype character satisfying Hypothesis \ref{XW} above.
Assume that $(c(\pi), \mathfrak{D}\mathfrak{p}) = (\mathfrak{p}, \mathfrak{D}) = 1$. 
Writing $c(\pi)^+ c(\pi)^{-}$ to denote the decomposition of $c(\pi) \mathcal{O}_K$ into a product of split primes $c(\pi)^+$ and inert primes $c(\pi)^{-1}$, 
let us also assume (5) that $c(\pi)^{-}$ is the squarefree product of a number of primes $v \subset \mathcal{O}_F$ congruent to $d  \bmod 2$, 
hence the corresponding root number $\epsilon \in \lbrace \pm 1 \rbrace$ of $L(s, \pi_K)$ is $+1$. 
As well, let us assume (6) that the residual representation $\overline{\rho}_{\pi}$ is ramified at each prime $v \mid c(\pi)^{-} \subset \mathcal{O}_F$, 
and (7) that the following technical conditions are met: \\

\begin{itemize}

\item[(A)] The restriction $\overline{\rho}_{\pi} \vert_{F(\zeta_p)}$ of $\overline{\rho}_{\pi}$ to the field $F(\zeta_p)$ 
obtained by adjoining a primitive $p$-th root of unity $\zeta_p$ to $F$ is absolutely irreducible. \\

\item[(B)] The following case is excluded when $p=5$: the projective image $J$ of $\overline{\rho}_{\pi}$ is isomorphic to 
$\operatorname{PGL}_2({\bf{F}}_p)$, and the mod $p$ cyclotomic character factors through 
$G_F \longrightarrow J^{\operatorname{ab}} \approx {\bf{Z}} / 2 {\bf{Z}}$. \\

\item[(C)] There is a minimal modular lifting of $\overline{\rho}_{\pi}$. \\

\item[(D)] Ihara's lemma for Shimura curves over totally real number fields is true; see \cite[Hypothesis 11.5]{VO2}. \\

\item[(E)] For each finite place $v$ of $F$, if $\overline{\rho}_{\pi} \vert_{I_{F_v}}$ is absolutely irreducible, then 
the cardinality of the residue field at $v$ is not congruent to $-1 \bmod p$. \\
 
\item[(F)] The completed character group associated to the residual representation $\overline{\rho}_{\pi}$ 
satisfies the multiplicity one condition of \cite[Hypothesis 11.5]{VO2}. \\
 
\end{itemize} Then, the $r$-variable main conjecture is true: We have that 
$\left( \mathcal{L}_{\mathfrak{p}}(\pi, K_{\infty}) \right) = \left( \operatorname{char}(X(\pi, K_{\infty}))\right)$ in $\mathcal{O}[[G]]$. 

\end{theorem}

\begin{proof} 

The result can be derived from \cite[Proposition 4.3]{VO3}, which carries over with minor changes to deal with 
the more general setting of $\delta >1$, along with \cite[Theorem 3]{XW}, \cite[Theorem 1.5]{Lon}, and \cite[Theorem 1.2]{VO2}. 
That is, putting together \cite[Theorem 3]{XW} with \cite[Theorem 1.5]{Lon} shows that we have an equality of principal ideals 
$\left( \mathcal{L}_{\mathfrak{p}}(\pi, K_{\infty})\vert_{\Omega} \right) = \left( \operatorname{char}(X(\pi, K_{\infty})\vert_{\Omega})\right)$ 
in $\mathcal{O}[[\Omega]]$. Writing $K_n$ for an integer $n \geq 1$ to denote the degree-$p^n$ extension of $K$ contained in 
$K^{\operatorname{cyc}}$, with $\Omega^{(n)} = \operatorname{Gal}(K_n D_{\infty}/K_n) \approx {\bf{Z}}_p^{\delta}$, 
we obtain from \cite[Theorem 1.2]{VO2} (b) that for each $n \geq 1$, there is an inclusion of ideals
$\left( \mathcal{L}_{\mathfrak{p}}(\pi, K_{\infty})\vert_{\Omega^{(n)}} \right) \subseteq \left( \operatorname{char}(X(\pi, K_{\infty})\vert_{\Omega^{(n)}})\right)$ 
in $\mathcal{O}[[\Omega^{(n)}]]$. Using (a) and (b) as input for \cite[Proposition 4.3]{VO3}, we deduce there is an inclusion
of ideals $\left( \mathcal{L}_{\mathfrak{p}}(\pi, K_{\infty}) \right) \subseteq \left( \operatorname{char}(X(\pi, K_{\infty}))\right)$ in $\mathcal{O}[[G]]$. 
The other inclusion is the proven in \cite[Theorem 3]{XW}. \end{proof} 

\subsubsection*{Applications to Mordell-Weil ranks}

Let us again write ${\bf{Q}}(\pi)$ to denote the Hecke field of $\pi$. It is conjectured and known in many cases that 
one can associate to $\pi$ an abelian variety $A = A_{\pi}$ over $F$ (defined uniquely up to isogeny) for which the following properties hold: \\

\begin{itemize} 

\item[(i)] The dimension of $A$ equals the degree of the Hecke field of $\pi$, i.e.~$\dim(A) = [{\bf{Q}}(\pi): {\bf{Q}}]$. \\

\item[(ii)] The ring of endomorphisms of $A$ is given by the integers of the Hecke field of $\pi$, 
i.e.~$\operatorname{End}_F(A) = \mathcal{O}_{ {\bf{Q}}(\pi)}$. \\

\item[(iii)] The Hasse-Weil $L$-function $L(s, A/F)$ of $A$ over $F$ is described (Euler factor for Euler factor)
in terms of the finite part $L(s, \pi)$ of the standard $L$-function of $\pi$ by the relation
\begin{align*} L(s-1/2, \pi) &= \Gamma_{\bf{C}}(s) L(s, A/F), \text{ where  $ \Gamma_{\bf{C}}(s) := 2(2\pi)^{-1} \Gamma(s)$.} \end{align*} \end{itemize}
The first two conditions (i) and (ii) imply that $A/F$ is of {\it{$\operatorname{GL}_2$-type}}, 
and also of {\it{strict $\operatorname{GL}_2$-type}} in the sense of \cite[$\S 3.2$]{YZZ}. 
Let $\Sha(A/K_{\infty})$ denote the Tate-Shafarevich group of $A$ over $K_{\infty}$, with $\Sha(A/K_{\infty})[p^{\infty}]$ its $p$-primary subgroup. 
Using the Kummer exact sequence
\begin{align*} 0 \longrightarrow A(K_{\infty}) \otimes {\bf{Q}}_p / {\bf{Z}}_p \longrightarrow \operatorname{Sel}(\pi, K_{\infty}) 
\longrightarrow \Sha(A/K_{\infty})[p^{\infty}] \longrightarrow 0 \end{align*} 
with the interpolation property satisfied by $\mathcal{L}_{\mathfrak{p}}(\pi, K_{\infty})$ (described in Theorem \ref{mvplfn} above) 
we can derive the following result from Theorem \ref{IMC}. Given a finite-order character $\mathcal{W}$ of $G$, 
let us write $\pi(\mathcal{W})$ to denote the corresponding induced representation of $\operatorname{GL}_2({\bf{A}}_F)$, 
and also $L(s, \pi \times \mathcal{W}) = L(s, \pi \times \pi(\mathcal{W}))$ to the denote the corresponding 
$\operatorname{GL}_2({\bf{A}}_F) \times \operatorname{GL}_2({\bf{A}}_F)$ Rankin-Selberg $L$-function of $\pi$ times $\pi(\mathcal{W})$. 

\begin{corollary} Assume the conditions of Theorem \ref{IMC}. If for some finite-order character $\mathcal{W}$ of $G$ the central value 
$L(1/2, \pi \times \mathcal{W}) = L(1/2, \pi \times \pi(\mathcal{W}))$ does not vanish, then the $\mathcal{W}$-isotypical components of both 
$A(K_{\infty})$ and $\Sha(A/K_{\infty})[p^{\infty}]$ are finite. \end{corollary}

Hence, we obtain from Theorem \ref{GAnv} the following result.

\begin{theorem}\label{isotyp} 

Let $p\geq 5$ be a prime, and let $\pi \cong \widetilde{\pi}$ be a cuspidal $\operatorname{GL}_2({\bf{A}}_F)$-automorphic representation  
attached to a Hilbert modular eigenform of parallel weight $2$, level $c(\pi)$, and trivial character satisfying Hypothesis \ref{XW}.
Assume the hypotheses of Theorem \ref{IMC}. Assume as well that that the Hecke field ${\bf{Q}}(\pi)$ is linearly disjoint over ${\bf{Q}}$ 
to the cyclotomic tower ${\bf{Q}}(\zeta_{p^{\infty}}) = \bigcup_{n \geq 1} {\bf{Q}}(\zeta_{p^n})$. 
Let $K/F$ be a totally imaginary quadratic extension of relative discriminant $\mathfrak{D}\subset \mathcal{O}_F$. 
and absolute discriminant $D_K$.
Let $\rho = \rho_w$ be any ring class character factoring through $G = \operatorname{Gal}(K_{\infty}/K)$. 
There exists an integer $\beta_0(\rho)$ such that for all characters $\psi = \psi_w$ of the cyclotomic Galois group 
$\Gamma = \operatorname{Gal}(K^{\operatorname{cyc}}/K)$ 
of exact order $p^{\beta}$ with $\beta \geq \beta_0(\rho)$, the central value $L(1/2, \pi \times \rho \psi)$ does not vanish, and hence 
the corresponding $\rho \psi$-isotypical components of both $A(K_{\infty})$ and $\Sha(A/K_{\infty})[p^{\infty}]$ are finite. \end{theorem}

\begin{proof} The result follows from Theorem \ref{GAnv} (via Corollary \ref{RCGAnv}), 
using the Weierstrass preparation theorem in the cyclotomic variable of the corresponding 
$(\delta_{\mathfrak{p}}+1)$-variable $p$-adic $L$-function for each choice of $\rho$. \end{proof}

We can also deduce the following rank formula. 
Let $\epsilon \in \lbrace \pm 1 \rbrace$ denote the sign of the Hasse-Weil $L$-function $L(s, A/K)$ of $A$ over $K$, 
equivalently the root number $\epsilon(1/2, \pi_K)$ of the $L$-function $L(s, \pi_K)$ of the basechange $\pi_K = \operatorname{BC}_{K/F}(\pi)$ 
of $\pi$ to $K$. Let $K^{\operatorname{cyc}}$ denote the cyclotomic ${\bf{Z}}_p$-extension of $K$, 
with Galois group $\Gamma = \operatorname{Gal}(K^{\operatorname{cyc}}/K) \cong {\bf{Z}}_p$. 
Let $D_{\infty}$ denote the anticyclotomic ${\bf{Z}}_p^{\delta}$-extension of $K$, 
with Galois group $\Omega = \operatorname{Gal}(D_{\infty}/K) \cong {\bf{Z}}_p^{\delta}$. Let $K_{\infty}$ denote the compositum 
$D_{\infty}K^{\operatorname{cyc}}$ of these extensions, with Galois group $G = \operatorname{Gal}(K_{\infty}/K) \approx {\bf{Z}}_p^r$. 

\begin{theorem}\label{rank} 

Let $\pi \cong \widetilde{\pi}$ be a cuspidal automorphic representation of $\operatorname{GL}_2({\bf{A}}_F)$ 
associated to a Hilbert modular eigenform of parallel weight $2$, conductor $c(\pi)$, and trivial character. 
Let $K/F$ be a totally imaginary quadratic extension of relative discriminant $\mathfrak{D}\subset \mathcal{O}_F$ 
and absolute discriminant $D_K$.
Suppose that $\pi$ has associated to it an abelian variety $A/F$ satisfying conditions (i), (ii), and (iii), 
and that $(c(\pi), \mathfrak{D}\mathfrak{p}) = (\mathfrak{p}, \mathfrak{D}) = 1$. 
Assume additionally that the following conditions hold: 
(iv) if $A$ acquires CM after basechange to some quadratic extension $K_{\pi}/F$, then this extension $K_{\pi}$ is not contained in 
$K_{\infty}$ when $\epsilon = +1$, and (v) $A$ has good ordinary reduction at all primes above $p$ in $F$. Finally, if the residue degree 
$\delta = [F_{\mathfrak{p}}: {\bf{Q}}_p]$ is greater than one, then we also assume the conditions of Theorems \ref{IMC}, \ref{ACmu}, and \ref{GAnv} 
above (including the vanishing of the anticyclotomic $\mu$-invariant), and that the absolute discriminant $D_K$ is sufficiently large. 
Then, $A(K_{\infty})$ is finitely generated if $\epsilon = +1$, and otherwise $A(K_{\infty})/A(D_{\infty})$ is finitely generated if $\epsilon = -1$. \end{theorem}

\begin{proof} 

Note that if $\delta =1$, then we can use the argument of \cite[Proposition 3.14]{VO3} (cf. \cite[Part II, $\S 1$]{VO12}) 
to deduce the result. In brief, this argument shows that we have the rank formula
\begin{align}\label{rf} \operatorname{corank}_{\mathcal{O}[[H]]} \left( A(K_{\infty}) \otimes {\bf{Q}}_p / {\bf{Z}}_p\right) 
&=\begin{cases} 0 &\text{ if $\epsilon = +1$}\\ 1 &\text{ if $\epsilon = -1$},\end{cases} \end{align}
where $\mathcal{O}$ is a finite extension of ${\bf{Z}}_p$ large enough to contain the integers of 
${\bf{Q}}(\pi)$, $H= \operatorname{Gal}(K_{\infty}/K^{\operatorname{cyc}}) \cong {\bf{Z}}_p^{\delta}$, 
and $\mathcal{O}[[H]]$ is the $\mathcal{O}$-Iwasawa algebra of $H$, 
in other words the completed group ring coming from $\mathcal{O}$-valued measures on $H$.
We deduce from such a formula when $\delta = 1$ that $A(K_{\infty})$ is finitely generated when $\epsilon = +1$,
and (using the nontriviality theorem of \cite[Theorem 1.15]{CV} with the formula of \cite{YZZ}) 
that $A(K_{\infty})/A(K_{\mathfrak{p}^{\infty}})$ is finitely generated when $\epsilon = -1$.  

Suppose now that $\delta \geq 2$. If $\epsilon = -1$, then the stated formula can be deduced again from the argument of 
\cite[Proposition 3.14]{VO3}, using the nontriviality theorem of \cite[Theorem 1.15]{CV} with \cite[Theorem 1.2]{YZZ} 
to derive a suitable growth-of-rank formula for each of the anticyclotomic lines. 
To describe this, let us for each integer $n \geq 0$ write $K_n$ to denote the extension of degree $p^n$ 
contained in the cyclotomic extension $K^{\operatorname{cyc}}$.
Consider the anticyclotomic extension defined by the compositum $K_n D_{\infty}$, which is contained in the anticyclotomic 
${\bf{Z}}_p^{\delta p^n}$-extension of $K_n$. Let $\Omega^{(n)} = \operatorname{Gal}(K_n D_{\infty}/K_n) \cong {\bf{Z}}_p^{\delta}$ 
denote the corresponding Galois group. Using the argument with results mentioned above, we derive the rank formula 
\begin{align*} \operatorname{corank}_{O[[\Omega^{(n)}]]} \left( A(K_n D_{\infty}) \otimes {\bf{Q}}_p/ {\bf{Z}}_p \right) &=1 \end{align*}
for each integer $n \geq 0$, where $\mathcal{O}[[\Omega^{(n)}]]$ denotes the corresponding Iwasawa algebra. 
Passing to the limit implies the corresponding $\mathcal{O}[[H]]$-module formula $(\ref{rf})$. 
Suppose now that $\epsilon = +1$. Fix an integer $n \geq 0$. 
We can use \cite[Theorem (A$^{\prime}$)]{Nek}, fixing an auxiliary prime $l$ and 
$\mathfrak{l} \mid l$ in ${\bf{Q}}(\pi)$ satisfying conditions \cite[(A1$^{\prime}$), (A2$^{\prime}$), (A3$^{\prime}$)]{Nek}, 
to deduce the following implication: If for any finite-order character $\rho^{(n)}$ of $\Omega^{(n)}$ 
(arising via composition with the norm $\rho^{(n)} = \rho \circ {\bf{N}}$ from a character $\rho$ of $\Omega^{(0)}$) the central value
\begin{align*} L(1/2, \pi^{(n)} \times \rho^{(n)}) = \prod_{\psi \in \operatorname{Gal}(K_n/K)^{\vee}} L(1/2, \pi \times \rho \psi) = L(1, A/K_n, \rho^{(n)}) \end{align*} 
does not vanish, then the $\rho^{(n)}$-isotypical component of the Mordell-Weil group $A(K_n[c(\rho^{(n)})])$ is finite. 
Here, $\pi^{(n)}$ denotes the basechange of $\pi$ to the degree-$p^n$ extension of
$F$ contained in its cyclotomic ${\bf{Z}}_p$-extension, with $L(s, \pi^{(n)} \times \rho^{(n)})$ the corresponding Rankin-Selberg $L$-function, 
and the first equality denotes the Artin decomposition into Rankin-Selberg $L$-functions of $\operatorname{GL}_2({\bf{A}}_F)$-representations.
As well, $K_n[c(\rho^{(n)})]$ denotes the ring class extension of $K_n$ of conductor $c(\rho^{(n)})$, where $c(\rho^{(n)})$ 
denotes the conductor of $\rho^{(n)}$. This allows us to deduce from Theorem \ref{GAnv} that
\begin{align*} \operatorname{corank}_{O[[\Omega^{(n)}]]} \left( A(K_n D_{\infty}) \otimes {\bf{Q}}_p/ {\bf{Z}}_p \right) &=0 \end{align*}
for each integer $n \geq 0$. Passing to the limit again implies the corresponding $\mathcal{O}[[H]]$-module formula $(\ref{rf})$. 

To deduce the stated formula for Mordell-Weil ranks in either case on the sign $\varepsilon$ when $\delta \geq 2$, 
we argue as follows these $\mathcal{O}[[H]]$-module formulae will suffice. Let us now impose the conditions of Theorem \ref{IMC}. 
We are then reduced (via Theorem \ref{isotyp}) to considering specializations of the underlying $p$-adic $L$-function 
$\mathcal{L}_{\mathfrak{p}}({\bf{1}}; T_1, \cdots, T_{\delta}, T_{\delta + 1})$ to check the claim directly.
Assuming the conditions of Theorem \ref{ACmu}, we deduce that there exists a minimal anticyclotomic exponent $\alpha_0$ such that each 
character $\rho_w$ of $\Omega$ of exact order $p^{\alpha}$ with $\alpha \geq \alpha_0$, the corresponding specialization 
$\mathcal{L}_{\mathfrak{p}}({\bf{1}}; \rho_w(\gamma_1)-1, \cdots, \rho_w(\gamma_{\delta}-1, \psi_w(\gamma_{\delta + 1})-1)$
does not vanish for any character $\psi_w$ factoring through the cyclotomic Galois group $\Gamma \cong {\bf{Z}}_p$.
To deal with the remaining anticyclotomic exponents $\leq \alpha_0$, we apply the result of Corollary \ref{RCGAnv} for the setting 
associated to each character $\rho_w$ of $\Omega \cong {\bf{Z}}_p^{\delta}$ of exact order $p^{\alpha}$ 
with $0 \leq \alpha \leq \alpha_0$ and $\beta =1$ to deduce the claim, e.g.~after applying the Weierstrass preparation
theorem to the corresponding basechange cyclotomic variable. \end{proof}

\end{document}